\newtheorem{thm}{Theorem}[section]
\newtheorem{lem}[thm]{Lemma}
\theoremstyle{definition}
\newtheorem{defi}[thm]{Definition}
\newtheoremstyle{rmk}
  {11pt}                   
  {11pt}                   
  {}                       
  {}                       
  {\normalfont\bfseries}   
  {.}                      
  { }               
  {}
\theoremstyle{rmk}
\newtheorem{rmk}[thm]{Remark}
\newcommand{\N} { \mathbb{N} }
\newcommand{\Z} { \mathbb{Z} }
\newcommand{\R} { \mathbb{R} }
\newcommand{\1}[1]{{\mathds 1}_{\{#1\}}}
\begin{document}

\title{Cutoff and mixing time for transient random walks in random environments}
 \author{}
\date{\today}
\maketitle
\vspace{0.5cm}

\centerline
{Nina Gantert and Thomas Kochler}

\vspace{0.5cm}

\begin{quote}{\small {\bf Abstract: }
We show that a sequence of birth-and-death chains, given by lazy random walks in a transient
environment (RWRE) on $[0,n]$,
exhibits a cutoff in the ballistic regime but does not exhibit a cutoff in the (interior of) the subballistic regime.
We investigate the growth of the mixing times for this model. As an important step in the proof, we derive bounds for the quenched expectation and the quenched variance of the hitting times of the RWRE, which are of independent interest.
}
\end{quote}

\begin{quote}{\small {\bf Key words: }
Cutoff for Markov chains, random walk in random environment, mixing times, hitting times
}
\end{quote}
\begin{quote}{\small {\bf 2010 Mathematics Subject classifications: }
60K37, 60G50, 60J10.
}
\end{quote}
\begin{quote}{\small {\bf Short Title: }
Cutoff and mixing time for transient RWRE
}
\end{quote}

\section{Introduction and statements of the results}
In this paper, we consider a model which is standard by now, namely one-dimensional Random Walk in Random Environement (RWRE). 
The environment $\omega:=(\omega_k)_{k\in\Z}$ is a family of i.i.d.~random variables taking values in $(0,1)$. We denote the distribution of $\omega$ by $\textbf{P}$ and the corresponding expectation by $\textbf{E}$.
After choosing an environment $\omega$ at random according to the law $\textbf{P}$, we define the random walk in random environment (RWRE) as the nearest neighbour random walk $(X_k)_{k\in\N_0}$ on $\Z$ with transition probabilities given by $\omega$: with respect to $P_\omega^z$ ($z\in\Z$), $(X_k)_{k\in\N_0}$ is  the (time homogeneous) Markov chain on $\Z$ with $P_\omega^z(X_0=z)=1$ and
\begin{align}
	P_\omega^z\left[X_{k+1} = i+1\ |\ X_k=i\right]\ &=\ \omega_i, \nonumber\\
	P_\omega^z\left[X_{k+1} = i-1\ |\ X_k=i\right]\ &=\ 1 - \omega_i. \label{RWRE}
\end{align}
for $k\in\N_0$, $i\in\Z$.
However, the question we ask is not a standard question in this context: we fix the environment and consider a sequence of Markov chains, given by
the RWRE with reflection at $0$ and $n$. More precisely, 
for $n\in\N$ the sequence $\omega^n:=(\omega_k^n)_{k\in\{0,..,n\}}$ is given by
\begin{align}
	\omega_k^n :=\ &\begin{cases}
										1 & \text{for } k=0,\\
										\omega_k & \text{for } k= 1,...,n-1,\\
										0				 & \text{for } k=n. \nonumber
								 \end{cases}	
\end{align} 
Now, define $P_{\omega^n}^z$ as the distribution of a RWRE on $\{0,\ldots,n\}$ with reflection in $0$ and $n$ which is, for fixed environement $\omega$, a Markov chain with values in $\{0, \ldots ,n\}$. 
To avoid periodicity problems, we pass to a lazy RWRE which stays in place with probability $1/2$. For fixed environment $\omega$,
$(Y^n_k)_{k\in\N_0}$ is the Markov chain on $\{0,...,n\}$ with $P_{\omega^n}^z(Y^n_0=z)=1$ and with the following transition probabilities: for $i\in \{0,...,n\}$ and $k\in\N$ we have
\begin{align}
	P_{\omega^n}^z\left[Y^n_{k+1} = i+1\ |\ Y^n_k=i\right]\ &=\ \frac{\omega_i^n}2, \nonumber\\
	P_{\omega^n}^z\left[Y^n_{k+1} = i\  |\ Y^n_k=i\right]\ &=\ \frac12, \nonumber\\
	P_{\omega^n}^z\left[Y^n_{k+1} = i-1\ |\ Y^n_k=i\right]\ &=\ \frac{1 - \omega_i^n}2.\label{lRWRE}
\end{align}
For this sequence of Markov chains, we will investigate the behaviour of the mixing times and we ask if the sequence exhibits a cutoff, which roughly means that the distance to equilibrium decays rapidly in a small time window. 
More precisely,
let $(U_k^n)_{k\in\N_0}$ be, for each $n\in\N$, an aperiodic and irreducible Markov chain on a finite state space $\Omega_n$ and let $(\pi_n)_{n\in\N}$ denote the sequence of associated stationary distributions. Further, we assume $$|\Omega_n|\ \stackrel{n\to\infty}{\longrightarrow}\ \infty.$$
\begin{defi}\label{CD1}
 	For the sequence $(U^n_k)_{k\in\N_0}$ the mixing time $t_{\text{mix}}(n)$ is defined by 
 	\begin{align}
 		t_{\textnormal{mix}}(n):=\ &\min\left\{l\in\N : d_n(l)\le \frac14\right\}, \nonumber
 		\intertext{where}
 		d_n(l):=\ &\max_{x\in\Omega_n} \big\Vert \mathbb{P}^x(U_l^n\in \cdot) - \pi_n(\cdot)\big\Vert_{TV} 		\label{CD1.1}
 	\end{align} 	
and $||\cdot||_{TV}$ denotes distance in total variation.
\end{defi}
We note that due to the convergence theorem for aperiodic and irreducible Markov chains we have that $t_{\textnormal{mix}}(n)$ is finite for every fixed $n$ because $d_n(l)\ \stackrel{l\to\infty}{\longrightarrow}\ 0$.  In most relevant cases $t_{\textnormal{mix}}(n)$ tends to infinity with growing state space: the question is, how fast does $t_{\textnormal{mix}}(n)$ grow?
Note that $d_n(k)$ can be interpreted as the worst case distance to stationarity after $k$ steps. \vspace{11pt}\\
Next, we define the cutoff phenomenon for a sequence of aperiodic and irreducible Markov chains. This effect describes a sharp transition of the total variation distance of the distribution of the Markov chain and its stationary distribution from 1 to 0 in a small window around the mixing time. 
\begin{defi}\label{CD2}
	The sequence $(U^n)_{n\in\N}$ exhibits a \textit{cutoff} with cutoff times $(t_n)_{n\in\N}$ and window size $(f_n)_{n\in\N}$ if	
\begin{enumerate}
	\item $\displaystyle f_n = o(t_n),$
	\item	$\displaystyle \lim_{c\to\infty}\liminf_{n\to\infty} d_n(t_n - cf_n)\ =\ 1 \text{ and}$
	\item	$\displaystyle\lim_{c\to\infty}\limsup_{n\to\infty} d_n(t_n + cf_n)\ =\ 0.$
\end{enumerate}
\end{defi} 
As usual, $P_\omega^z$ is called the quenched law of $(X_k)_{k\in\N_0}$ starting from $X_0=z$ and  we denote by $E_\omega^z$ the corresponding quenched expectation. Let $\Z^{\N_0}$ be the space of the paths of the RWRE and let $\mathcal{F}$ be the associated $\sigma$-algebra generated by all cylinder sets. By $\mathbb{P}^z:= \textbf{P}\times P_\omega^z$ we denote the measure on $\left((0,1)^{\Z}\times \Z^{\N_0},  \left(\mathcal{B}_{(0,1)}\right)^{\Z}\otimes \mathcal{F}\right)$ defined by the relation
$$\mathbb{P}^z(B\times F)\ =\ \int_B P_\omega^z(F)\textbf{P}(d\omega),\ \ \ B\in \left(\mathcal{B}_{(0,1)}\right)^{\Z}, F\in \mathcal{F}, $$ where $\mathcal{B}_{(0,1)}$ is the Borel-$\sigma$-algebra on $(0,1)$. The expectation under $\mathbb{P}^z$ is denoted by $\mathbb{E}^z$. We will refer to $\mathbb{P}^z$ and $\mathbb{E}^z$ as the annealed law and the annealed expectation respectively. If $z=0$, we simply write $P_\omega$, $E_\omega$, $\mathbb{P}$ and $\mathbb{E}$. A crucial role will be played by the hitting times of the RWRE, defined by
\begin{equation}\label{T_ndef}
T_n:=\ \min\{l\in\N_0\ :\ X_l = n\}, \,  n =1,2, \ldots
\end{equation}
For $i\in\Z$, let
\begin{align}
\rho_i:=\ \frac{1-\omega_i}{\omega_i}\label{rhodef}
\end{align}
It goes back to \cite{So} that
\begin{equation}\label{positive Geschwindigkeit}
 \mathbb{E} T_1\ < \infty \text{ iff } \textbf{E} \rho_0 < 1. 
\end{equation}
Throughout this work, we make the following assumptions on the environment distribution $\textbf{P}$:\\
\textbf{Assumption 1.} $\textbf{E}\ln\rho_0 < 0$.\\
\textbf{Assumption 2.} There exists a unique $\kappa>0$ such that $$\textbf{E}[\rho_0^{\kappa}]= 1 \text{ and } \textbf{E}[\rho_0^\kappa \ln^+\rho_0] < \infty.$$ 
We sometimes need a further technical assumption which we mention if it is needed: \\
\textbf{Assumption 3.} The distribution of $\ln \rho_0$ is non-lattice with respect to $\textbf{P}$.
\begin{rmk}
\textbf{1)} Assumption 1 implies transience to the right (cf. Theorem 1.7 in \cite{So}). \\
\textbf{2)} The constant in Assumption 2 has a significant impact on the behaviour of the RWRE. If it exists, its value separates the ballistic ($\kappa >1$) from the sub-ballistic ($\kappa \le 1$) regime. By the law of large numbers (cf. Theorem 1.16 in \cite{So}), we have
				\begin{align*}
					\lim_{n\to\infty} \frac{X_n}n\ =\ \lim_{n\to\infty} \frac{n}{T_n}\ =\ \frac{1}{\mathbb{E}T_1}\ =\ v_\text{P}\ \hspace{10pt} \mathbb{P}\text{-}a.s.
				\end{align*} 
and $v_\text{P} >0$ if and only if $\kappa >1$ (cf.~\eqref{positive Geschwindigkeit}). We will also refer to the case $v_\text{P}>0$ as the case with positive linear speed.\\
\textbf{3)} Assumptions 1 and 2 exclude all deterministic environments.\\
\textbf{4)} Note that Assumption 3 is also used in \cite{K}, \cite{PZ} and \cite{G} to show annealed and quenched limit theorems. We refer to \cite{Ko} to prove that our results are also true under the weaker assumption that the union of the support of the distribution of $\ln \rho_0$ and $\{0\}$ is non-lattice. In particular, this weaker assumption includes many environment distributions which consist of just two possible choices for the transition probabilities which all are excluded by Assumption 3.
\end{rmk}
We investigate for which $\kappa>0$ a sequence of lazy RWRE on  $(\{0,...,n\})_{n\in\N}$ exhibits a cutoff. 
We show that although the lazy  RWRE is transient to the right for all $\kappa>0$, we only observe a sharp transition of the distance in total variation to its stationary distribution in the case of positive linear speed ($\kappa>1)$. Let $t_{\text{mix}}^\omega(n)$ denote the mixing time of the lazy RWRE with respect to $P_{\omega^n}$.
\begin{thm}\label{CT2}
	Let Assumptions 1 and 2 hold and assume $\kappa >1$. Then for $\textbf{P}$-almost every environment $\omega$ the sequence of lazy RWRE $(Y_k^n)_{k\in\N_0}$ on $(\{0,...,n\})_{n\in\N}$ exhibits a cutoff with cutoff times
	\begin{align*} 
		t_\omega(n)&:=\ 2E_{\omega^n}(T_n) 
	\intertext{and window size}
		f_\omega(n)&:=\  \sqrt{\textnormal{Var}_{\omega^n}(T_n)}.
	\end{align*}																				
\end{thm}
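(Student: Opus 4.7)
The plan is to reduce the cutoff question to a quenched concentration estimate for the lazy hitting time of $n$ from $0$, together with two structural facts about the stationary distribution $\pi_{\omega^n}$ in the ballistic regime. By the standard monotone coupling for birth-and-death chains, the function $x \mapsto \|P_{\omega^n}^x(Y_l^n \in \cdot) - \pi_{\omega^n}\|_{TV}$ is maximized at $x = 0$, so it suffices to control the chain started at $0$. Let $\widetilde T_n$ denote the hitting time of $n$ by the lazy walk started at $0$; writing $\widetilde T_n = \sum_{k=1}^{T_n} G_k$ with $G_k$ i.i.d.~$\textnormal{Geom}(1/2)$ independent of $T_n$, I get $E_{\omega^n}(\widetilde T_n) = 2 E_{\omega^n}(T_n)$ and $\textnormal{Var}_{\omega^n}(\widetilde T_n) = 4\,\textnormal{Var}_{\omega^n}(T_n) + 2 E_{\omega^n}(T_n)$. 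The quenched bounds on $E_{\omega^n}(T_n)$ and $\textnormal{Var}_{\omega^n}(T_n)$ announced in the abstract, combined with Chebyshev's inequality, then give, for $\textbf{P}$-a.e.~$\omega$,
\[
 P_{\omega^n}^0\Bigl(\bigl|\widetilde T_n - 2 E_{\omega^n}(T_n)\bigr| > c \sqrt{\textnormal{Var}_{\omega^n}(T_n)}\Bigr) \longrightarrow 0 \quad\text{as } c \to \infty,
\]
uniformly in large $n$ (using $E_{\omega^n}(T_n) = O(\textnormal{Var}_{\omega^n}(T_n))$ in the ballistic regime to absorb the extra $2 E_{\omega^n}(T_n)$ term).

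For the upper bound, I would run two lazy walks under the monotone coupling: one from $0$ and one from $\pi_{\omega^n}$. After the slower walk (the one from $0$) first reaches $n$, I argue that the two trajectories coalesce within a further time negligible compared to $\sqrt{\textnormal{Var}_{\omega^n}(T_n)}$. The relevant geometric input is the reversibility formula $\pi_{\omega^n}(i+1)/\pi_{\omega^n}(i) = \omega_i/(1-\omega_{i+1})$: together with Assumption~1 ($\textbf{E}\ln\rho_0 < 0$), it forces $\pi_{\omega^n}$ to be exponentially biased toward $n$, so $\pi_{\omega^n}(n)$ is bounded away from $0$ in $n$ and the stationary walk returns to $n$ on time scale $O(1)$. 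Coalescing at $n$ and combining with the concentration estimate above then yields the cutoff upper bound.

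For the lower bound, the same exponential concentration of $\pi_{\omega^n}$ near $n$ gives a set $A_n = \{i \ge (1-\delta)n\}$ with $\pi_{\omega^n}(A_n) \to 1$ for any fixed small $\delta > 0$. A parallel concentration estimate for the lazy hitting time of $\lfloor (1-\delta) n \rfloor$, obtained from the quenched bounds applied to $T_{\lfloor(1-\delta)n\rfloor}$ together with the ballistic linearity $E_{\omega^n}(T_m) \sim v_{\textnormal{P}}^{-1} m$, shows that the walk started at $0$ lies in $\{0, \ldots, \lfloor(1-\delta)n\rfloor - 1\}$ at time $2 E_{\omega^n}(T_n) - c \sqrt{\textnormal{Var}_{\omega^n}(T_n)}$ with probability tending to $1$ as $c \to \infty$, provided $\delta$ is chosen small enough that $E_{\omega^n}(T_n) - E_{\omega^n}(T_{\lfloor(1-\delta)n\rfloor})$ dominates $c\sqrt{\textnormal{Var}_{\omega^n}(T_n)}$. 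This forces the TV distance from $\pi_{\omega^n}$ at that time to tend to $1$.

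The main obstacle I expect is the quantitative coalescence step in the upper bound: after hitting $n$ the chain must equilibrate in time $o(\sqrt{\textnormal{Var}_{\omega^n}(T_n)})$. A naive monotone coupling only yields $O(1)$ expected coalescence time, and this must be strengthened to a high-probability bound holding uniformly in the random environment $\omega$. The cleanest route appears to be a quenched spectral gap lower bound obtained from the positive linear speed of the associated transient RWRE on $\Z$; alternatively, a direct coupling exploiting the exponential decay of $\pi_{\omega^n}$ away from $n$ should work but requires care to handle the rare local environments that produce unusually long return times to $n$.
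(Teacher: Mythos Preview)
Your upper bound is essentially the paper's argument, but you are inventing an obstacle that does not exist. Under the standard monotone coupling for a lazy birth-and-death chain (either the ``same uniforms'' coupling or the ``toss a coin to decide which copy moves'' coupling used in Lemma~\ref{CL2}), the two copies preserve their order and cannot cross. Hence the instant the copy started at $0$ reaches $n$, the copy started from any $y\ge 0$ is at $n$ as well; coalescence is automatic, not merely ``within negligible time''. So $d_n(k)\le P_{\omega^n}(T_n^{Y}>k)$ holds directly, and Chebyshev together with $\textnormal{Var}_{\omega^n}(T_n^{Y})=4\textnormal{Var}_{\omega^n}(T_n)+2E_{\omega^n}(T_n)$ finishes condition~(3) exactly as you outline. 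No spectral-gap or return-time estimate is needed.

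The lower bound, however, has a real gap. With any fixed $\delta>0$, the quenched ballistic law of large numbers gives $E_{\omega^n}(T_n)-E_{\omega^n}(T_{\lfloor(1-\delta)n\rfloor})$ of order $\delta n$, while Theorem~\ref{BT1} gives $\sqrt{\textnormal{Var}_{\omega^n}(T_n)}=o(n)$ for $\kappa>1$. So for every fixed $\delta$ (even if it is allowed to depend on $c$), the difference eventually dwarfs $c\sqrt{\textnormal{Var}_{\omega^n}(T_n)}$, and the Chebyshev step cannot close. (Your sentence says ``dominates'', which is the wrong direction, but the real problem is that no fixed $\delta$ works in either direction.) The paper instead takes $a_n=n-\lceil 2(\ln n)^2\rceil$, shows $\pi_{\omega^n}([a_n,n])\to 1$ by a direct potential computation (Lemma~\ref{CL1}), and then proves the non-obvious bound $E_{\widetilde\omega}(T_n)-E_{\widetilde\omega}(T_{a_n})\le 2\sqrt{\textnormal{Var}_{\widetilde\omega}(T_n)}$ (Lemma~\ref{PL9}). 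That lemma is the missing ingredient: the short interval $[a_n,n]$ can still contain one block whose quenched crossing time is of order $\sqrt{\textnormal{Var}_{\omega}(T_n)}$, and controlling this requires the potential analysis (the event $E(n,\tfrac23)$ forbidding two large potential increases in a window of length $(\ln n)^2$, together with Lemma~\ref{BL2}). Without an argument of this kind, your lower bound does not go through.
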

Note that although $\mathbb{E}(T_n^2) =\infty$ for $\kappa\le2$, we have  $\textnormal{Var}_{\omega^n}(T_n)< \infty$ for  $\textbf{P}$-almost every environment $\omega$ (cf.~Theorem \ref{BT1}) and all $\kappa>0$. We remark that 
$d_n(k)$ 
is decreasing in $k$. 
For simplicity's sake, we do not write integer parts. \vspace{11pt}\\
In the case $\kappa <1$ we show that there is no cutoff:
\begin{thm}\label{CT3}
	Let Assumptions 1-3 hold and assume $\kappa < 1$. Then for $\textbf{P}$-almost every environment $\omega$ the sequence of lazy RWRE $(Y_k^n)_{k\in\N_0}$ on $(\{0,...,n\})_{n\in\N}$ does not exhibit a cutoff.
\end{thm}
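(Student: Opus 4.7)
The plan is to invoke the cutoff criterion of Ding, Lubetzky and Peres for reversible lazy birth-and-death chains: such a sequence exhibits cutoff if and only if the product of the mixing time and the (absolute) spectral gap tends to infinity. Since each $Y^n$ is a reversible lazy birth-and-death chain, it suffices to produce, for $\textbf{P}$-almost every $\omega$, a (random) subsequence $(n_j)_j$ along which
\[
t_{\text{mix}}^\omega(n_j)\cdot\text{gap}^\omega(n_j)\ =\ O(1),
\]
where $\text{gap}^\omega(n)$ is the spectral gap of $Y^n$.

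First, I would identify the order of the mixing time. A standard comparison for reflecting birth-and-death chains gives $t_{\text{mix}}^\omega(n)\asymp E_{\omega^n}(T_n)$: the lower bound is because, starting from $0$, the walk has to traverse every bottleneck of the environment before it can be close to its stationary distribution, while the upper bound follows by combining Chebyshev with the quenched variance estimate of Theorem~\ref{BT1}, so that the chain has reached $n$ with high probability by time $2E_{\omega^n}(T_n)+C\sqrt{\text{Var}_{\omega^n}(T_n)}$. In the subballistic regime $\kappa<1$, extreme value theory for the i.i.d.\ environment under Assumption~2 shows that $E_{\omega^n}(T_n)$ is dominated by the \emph{deepest trap} of $\omega|_{[0,n]}$, a maximal interval $[a_n,b_n]\subset[0,n]$ whose product $\prod_{i=a_n}^{b_n}\rho_i$ is as large as possible; this depth, and hence $E_{\omega^n}(T_n)$, is of order $n^{1/\kappa}$ up to a slowly varying factor, $\textbf{P}$-a.s.

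To upper bound the gap, I would apply the Dirichlet variational formula with the test function $f=\1{i\ge b_n+1}$, which jumps across the deepest trap. The variance under $\pi_{\omega^n}$ is $\Theta(1)$, since both sides of the trap carry positive stationary mass, while the Dirichlet form $\mathcal{E}(f,f)$ collapses to a single edge conductance across the trap; this conductance is proportional to $1/\prod_{i=a_n}^{b_n}\rho_i$ and therefore at most $C/E_{\omega^n}(T_n)$. This gives $\text{gap}^\omega(n)\le C/E_{\omega^n}(T_n)\asymp C/t_{\text{mix}}^\omega(n)$, so $t_{\text{mix}}^\omega(n_j)\cdot\text{gap}^\omega(n_j)$ stays bounded along the relevant subsequence, and Ding--Lubetzky--Peres yields the absence of cutoff.

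The main technical obstacle is the quenched (almost sure) accounting: one needs, for $\textbf{P}$-a.e.\ $\omega$, that the quenched mean $E_{\omega^n}(T_n)$ and the depth of the deepest trap in $\omega|_{[0,n]}$ remain of comparable order along a subsequence. This requires Kesten's renewal theorem for the tail of the products $\prod_{i=0}^{k}\rho_i$ (where Assumption~3 enters to avoid lattice degeneracies), together with a Borel--Cantelli argument and the quantitative bounds of Theorem~\ref{BT1}, in order to transfer the annealed extreme value picture into the desired pointwise statement about the environment.
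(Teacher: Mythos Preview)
Your strategy---reduce to the Ding--Lubetzky--Peres product criterion and bound the gap via a single-edge Dirichlet form---is a genuinely different route from the paper's, but the argument as written has a real error and a further obstacle you have not addressed. The error: the claim that $\textnormal{Var}_{\pi_{\omega^n}}(f)=\Theta(1)$ for $f=\1{i\ge b_n+1}$ is false. By Lemma~\ref{CL1} the stationary measure $\pi_{\omega^n}$ concentrates on $[n-2(\ln n)^2,n]$, so unless the trap sits inside that window the set $\{i\le b_n\}$ carries stationary mass $o(1)$; your intuition that ``both sides carry positive mass'' belongs to the zero-drift picture and does not apply here. Likewise the normalized edge conductance $\pi_{\omega^n}(b_n)P(b_n,b_n+1)$ equals $\exp(-V(b_n+1))/(2C_n)$, not $1/\prod_{a_n}^{b_n}\rho_i$. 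If one redoes the computation correctly these two mistakes cancel in the ratio and one still obtains $\textnormal{gap}^\omega(n)\lesssim\exp(-H)$ with $H$ the trap height. The remaining obstacle is sharper: to conclude via DLP you need $t_{\textnormal{mix}}^\omega(n)\cdot\textnormal{gap}^\omega(n)$ bounded along a subsequence, hence $E_{\omega^n}(T_n)\le C\exp(H)$ with \emph{no} polylogarithmic loss. The block estimate (Lemma~\ref{BL1}) only gives $E_\omega^{\nu_i}T_{\nu_{i+1}}\le C(\ln n)^4\exp(H_i)$, and the Peterson--Zeitouni subsequence of Lemma~\ref{L4} controls the time \emph{up to} the deep block, not across it; so your product is bounded only by $C(\ln n)^4$, which diverges and yields no information through DLP.

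The paper avoids the spectral gap entirely. Along the subsequence of Lemma~\ref{L4} it shows, via the moment comparison of Lemma~\ref{BL2} and the uniform integrability of Lemma~\ref{BL3}, that the crossing time of the dominating block, rescaled by its quenched mean, converges in law under $P_\omega$ to some $Z$ with $E_\omega Z=1$ and $\textnormal{Var}_\omega Z\ge\tfrac14$. Since $Z$ is non-degenerate there exist $a<1<b$ with $P_\omega(Z>a)<1$ and $P_\omega(Z<b)<1$; translating back, one obtains two time sequences of the \emph{same} order as $E_\omega\widetilde T_n$ at which $d_n$ is bounded away from $1$ and from $0$ respectively, directly contradicting the existence of a window $f_n=o(t_n)$. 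This argument is insensitive to polylogarithmic prefactors, which is exactly why it succeeds where the spectral route becomes delicate.
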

To prove that for $\kappa<1$ there is no cutoff under Assumptions 1-3, we show that for $\textbf{P}$-almost every environment $\omega$ the window within which the total variation distance drops from $1$ to $0$ has the same order as the mixing time, and therefore the transition cannot be sharp in the sense of a cutoff. \vspace{11pt}\\
Furthermore, we determine the order of the mixing time: 
\begin{thm}\label{mixing time}
	For $\textbf{P}$-almost every environment $\omega$ we have
\begin{enumerate}
\item[(a)] $\displaystyle \lim_{n\to\infty} \frac{\ln t_{\textnormal{mix}}^\omega(n)}{\ln n}\ =\ \frac1\kappa$\hspace{26pt} for $0<\kappa\le 1$ and 
\item[(b)] $\displaystyle \lim_{n\to\infty} \frac{t_{\textnormal{mix}}^{\omega} (n)}n   \ =\ 2 \mathbb{E} T_1$\hspace{21pt} for $\kappa>1$.
\end{enumerate}
\end{thm}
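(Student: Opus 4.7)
The plan is to handle (b) and (a) differently: (b) follows from the cutoff result (Theorem \ref{CT2}) together with Birkhoff's ergodic theorem, while (a) requires sandwiching $t_{\mathrm{mix}}^\omega(n)$ between constant multiples of the quenched hitting-time moments of $T_n$ and then invoking the polynomial asymptotics of Theorem \ref{BT1}.

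\emph{Part (b).} Theorem \ref{CT2} gives $t_{\mathrm{mix}}^\omega(n) = 2E_{\omega^n}(T_n)(1+o(1))$ $\textbf{P}$-a.s., because the window $\sqrt{\textnormal{Var}_{\omega^n}(T_n)}$ is of smaller order than $E_{\omega^n}(T_n)$. It thus suffices to prove $E_{\omega^n}(T_n)/n \to \mathbb{E}T_1$ $\textbf{P}$-a.s. For the walk on $\mathbb{Z}$ one has $E_\omega^0(T_n) = \sum_{i=0}^{n-1} E_\omega^i(T_{i+1})$, each summand a measurable function of $(\omega_j)_{j \le i}$, so the sequence is stationary and ergodic under the shift; Birkhoff's ergodic theorem yields $n^{-1}E_\omega^0(T_n) \to \textbf{E}\, E_\omega^0(T_1) = \mathbb{E}T_1$. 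In the ballistic regime, returns to $0$ are exponentially rare, so the reflection correction $|E_{\omega^n}(T_n) - E_\omega^0(T_n)|$ is $o(n)$, which combined with the cutoff gives (b).

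\emph{Part (a).} I first establish the $\textbf{P}$-a.s.\ sandwich
\begin{equation*}
c\,E_{\omega^n}(T_n) \;\le\; t_{\mathrm{mix}}^\omega(n) \;\le\; C\bigl(E_{\omega^n}(T_n) + \sqrt{\textnormal{Var}_{\omega^n}(T_n)}\bigr) \quad \text{for large } n.
\end{equation*}
The upper bound follows by Chebyshev applied to the lazy hitting time at scale $E_{\omega^n}(T_n) + K\sqrt{\textnormal{Var}_{\omega^n}(T_n)}$, combined with the observation that once the reversible birth-and-death chain reaches $n$ it is essentially at equilibrium: Assumption~1 together with Birkhoff applied to $\sum\ln\rho_i$ forces $\pi_n$ to concentrate on the upper portion of $\{0,\ldots,n\}$. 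For the lower bound, $\pi_n([n/2,n]) \to 1$ $\textbf{P}$-a.s.\ by the same concentration, whence $d_n(t) \ge \pi_n([n/2,n]) - P_{\omega^n}^0(T_{n/2}\le t)$; taking $t$ to be the quenched median $m_n$ of $T_{n/2}$ forces $d_n(m_n) \ge 1/2 - o(1) > 1/4$, so $t_{\mathrm{mix}}^\omega(n) \ge m_n$. A Paley--Zygmund argument on $T_{n/2}$ using the quenched second-moment bound of Theorem \ref{BT1} shows $m_n$ has the same logarithmic order as $E_{\omega^n}(T_{n/2})$, which in turn agrees with $E_{\omega^n}(T_n)$ on the $\ln/\ln n$ scale. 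Given the sandwich, (a) reduces to the quenched asymptotics $\ln E_{\omega^n}(T_n)/\ln n \to 1/\kappa$ and $\ln\textnormal{Var}_{\omega^n}(T_n)/\ln n \to 2/\kappa$ $\textbf{P}$-a.s.\ supplied by Theorem \ref{BT1}: via the explicit formula $E_\omega^i(T_{i+1}) = 1 + 2\sum_{k\le i}\prod_{j=k+1}^i\rho_j$ and the Kesten tail $\textbf{P}(E_\omega^0(T_1) > t) \sim c t^{-\kappa}$ (a consequence of Assumption~2), Borel--Cantelli gives $\max_{i<n} E_\omega^i(T_{i+1}) = n^{1/\kappa+o(1)}$, hence $E_\omega^0(T_n) = n^{1/\kappa+o(1)}$; the variance is handled analogously, and the $\log n$ correction present at $\kappa=1$ disappears after dividing by $\ln n$.

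\emph{Main obstacle.} The delicate step is the lower bound in the sandwich: one must argue that the deep traps responsible for slow hitting also obstruct total-variation mixing, not merely that hitting $n$ takes time $E_{\omega^n}(T_n)$. This relies on the quenched concentration of $\pi_n$ on the far side of each large trap (from Assumption~1) together with tight control on the lower tail of the quenched distribution of $T_{n/2}$. Once this bridge is in place, the polynomial asymptotics pin down the exponent $1/\kappa$, including the borderline $\kappa=1$ case where the annealed growth carries an extra $\log n$ factor.
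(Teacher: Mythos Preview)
Your treatment of part (b) matches the paper's: cutoff plus Birkhoff plus the a.s.\ finiteness of the reflection correction (this is \eqref{PL0.1.0.01} in the paper). Your upper bound in part (a) is also fine; the paper uses a plain Markov bound at scale $12\,E_{\omega^n}(T_n)$ rather than Chebyshev, but either works once you invoke Theorem~\ref{BT1}(a).

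The gap is in your lower bound for part (a). Paley--Zygmund gives
\[
P_\omega\big(T_{n/2}>\theta\,E_\omega T_{n/2}\big)\ \ge\ (1-\theta)^2\,\frac{(E_\omega T_{n/2})^2}{E_\omega T_{n/2}^2},
\]
and to place the quenched median $m_n$ above $\theta\,E_\omega T_{n/2}$ you need the right side to exceed $1/2$. But Theorem~\ref{BT1} only says $\ln E_\omega T_n/\ln n\to 1/\kappa$ and $\ln\mathrm{Var}_\omega T_n/\ln n\to 2/\kappa$; this pins down both $(E_\omega T_n)^2$ and $E_\omega T_n^2$ to $n^{2/\kappa+o(1)}$, so their ratio is only known to be $n^{o(1)}$. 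Nothing prevents $\mathrm{Var}_\omega T_n/(E_\omega T_n)^2\to\infty$ slowly (and Lemma~\ref{BL2} in fact shows the variance of a single deep block already dominates the square of its mean crossing time), in which case Paley--Zygmund yields no information about $m_n$. So the sandwich lower bound is not established.

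The paper bypasses second moments entirely for the lower bound: it uses that $\pi_{\omega^n}$ concentrates on $[n-2(\ln n)^2,n]$ (Lemma~\ref{CL1}), then exploits the a.s.\ existence of a trap of height $H\ge\frac{1}{\kappa}(\ln n-4\ln\ln n)$ on $[0,n]$ (the set $B_4(n)$, Lemma~\ref{PL3}) together with a direct hitting-time estimate (Proposition~4.2 in \cite{FGP}) of the form $P_\omega(\text{cross trap in time }t)\le \gamma\,t\,e^{-H}$. Choosing $t=c\,n^{1/\kappa}(\ln n)^{-4/\kappa}$ with $c$ small makes this probability below $1/2$, whence $t_{\mathrm{mix}}^\omega(n)\ge c\,n^{1/\kappa}(\ln n)^{-4/\kappa}$. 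This trap-based lower bound is what you should substitute for the Paley--Zygmund step.
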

To prove Theorems \ref{CT2} - \ref{mixing time}, we will need the following bounds for the (quenched) expectation and 
variance of the hitting times, which are of independent interest.
\begin{thm}\label{BT1}
	We have
\begin{enumerate}
	\item[(a)] $\displaystyle \lim_{n\to\infty} \frac{\ln E_{\omega}\left(T_n\right)}{\ln n}\  =\ \max\left\{\frac1\kappa, 1\right\}\ \ \hspace{15pt} \ \rm{\bf P}-a.s.,$
	\item[(b)] $\displaystyle\lim_{n\to\infty} \frac{\ln\textnormal{Var}_{\omega}\left(T_n\right)}{\ln n}\  =\ \max\left\{\frac2\kappa, 1\right\}\ \ \ \ \rm{\bf P}-a.s.$
\end{enumerate}
\end{thm}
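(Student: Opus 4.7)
The proof rests on two classical identities. Set
\begin{equation*}
W_i\;:=\;\sum_{j=-\infty}^{i-1}\prod_{k=j+1}^{i}\rho_k\;=\;\rho_i+\rho_i\rho_{i-1}+\rho_i\rho_{i-1}\rho_{i-2}+\cdots,
\end{equation*}
which is $\textbf{P}$-a.s.\ finite by Assumption 1. A first-step analysis on the transient walk gives $E_\omega^i[T_{i+1}]=1+2W_i$, hence
\begin{equation*}
E_\omega T_n\;=\;n+2\sum_{i=0}^{n-1}W_i.
\end{equation*}
By the strong Markov property at the times $T_i$, and transience to the right, the increments $\tau_i:=T_{i+1}-T_i$ are \emph{independent} under $P_\omega$, so
\begin{equation*}
\textnormal{Var}_\omega T_n\;=\;\sum_{i=0}^{n-1}\textnormal{Var}_\omega^i(\tau_i),
\end{equation*}
and a second-moment first-step calculation (equivalently, expanding the quenched Laplace transform of $\tau_i$) yields an explicit formula for $\textnormal{Var}_\omega^i(\tau_i)$ in the $\rho_k$'s; the plan is to extract from it the two-sided bound $c(1+W_i^2)\le\textnormal{Var}_\omega^i(\tau_i)\le C(1+W_i^2)$ with deterministic constants. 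The sequence $(W_i)_{i\in\Z}$ is stationary ergodic under $\textbf{P}$, satisfies $W_{i+1}=\rho_{i+1}(1+W_i)$, and by Kesten's renewal theorem (using Assumptions 1 and 2) its stationary law has the polynomial tail $\textbf{P}(W>x)\sim C_{\mathrm K}\,x^{-\kappa}$.

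\textbf{Ballistic reductions.} For (a) with $\kappa>1$ one has $\textbf{E}W<\infty$, and Birkhoff's ergodic theorem gives $n^{-1}\sum_{i<n}W_i\to\textbf{E}W$ a.s., so $E_\omega T_n\sim n\,\mathbb{E}T_1$ and $\ln E_\omega T_n/\ln n\to 1=\max\{1/\kappa,1\}$. For (b) with $\kappa>2$ one has $\textbf{E}W^2<\infty$, hence $\textbf{E}\,\textnormal{Var}_\omega^0(\tau_0)<\infty$, and the ergodic theorem yields $\textnormal{Var}_\omega T_n\sim c\,n$ so the ratio tends to $1$.

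\textbf{Heavy-tail regimes.} The substantive work is for $\kappa\le 1$ in (a) and $\kappa\le 2$ in (b); in both cases the quantity to analyse is $S_n^{(\alpha)}:=\sum_{i<n}W_i^\alpha$ with $\alpha\in\{1,2\}$, whose summands have regularly varying tail of index $\kappa/\alpha\le 1$. For the \emph{upper bound}, fix $\delta>0$ and truncate at $K_n:=n^{\alpha/\kappa+\delta}$: the tail estimate gives $n\,\textbf{P}(W^\alpha>K_n)=O(n^{-\kappa\delta/\alpha})$, which is summable along the geometric subsequence $n_k:=2^k$, so by Borel-Cantelli no summand exceeds $K_{n_k}$ eventually; since the truncated mean satisfies $\textbf{E}[W^\alpha\,\mathds{1}_{\{W^\alpha\le K_n\}}]=O(K_n^{1-\kappa/\alpha})=O(n^{\alpha/\kappa-1+\delta'})$, Birkhoff applied to the bounded truncated observable (and monotone extension from the subsequence to all $n$) gives $S_n^{(\alpha)}=O(n^{\alpha/\kappa+\delta'})$ a.s. For the \emph{lower bound}, use $S_n^{(\alpha)}\ge\max_{i<n}W_i^\alpha$, partition $\{0,\ldots,n-1\}$ into mesoscopic blocks on which the Markov chain $(W_i)$ decorrelates (the drift $\textbf{E}\ln\rho<0$ from Assumption 1 providing the mixing), and apply Borel-Cantelli to obtain $\max_{i<n}W_i\ge n^{1/\kappa-\delta}$ eventually a.s., starting from $\textbf{P}(W>n^{1/\kappa-\delta})\asymp n^{-1+\kappa\delta}$.

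\textbf{Main obstacle.} The heaviest technical step is the quenched variance identity $\textnormal{Var}_\omega^i(\tau_i)\asymp 1+W_i^2$: one must expand $E_\omega^i(\tau_i^2)$ in first-step fashion, manage the resulting double sum of products $\prod_k\rho_k$, and verify that the quadratic term $W_i^2$ genuinely dominates as $W_i\to\infty$. Once this is in place, part (b) reduces to the same heavy-tail analysis as in (a) applied to $(W_i^2)$. A secondary delicate point is the a.s.\ lower bound on $\max_{i<n}W_i$: since the $W_i$'s are dependent, one needs the block/mixing argument sketched above, and this is also the place where the sharp Kesten tail asymptotics matter most.
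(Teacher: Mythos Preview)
Your overall strategy---Kesten's tail $\textbf{P}(W>x)\sim Cx^{-\kappa}$ plus truncation and Borel--Cantelli---is a genuine alternative to the paper's route, which works through the potential $V(x)=\sum_{i<x}\ln\rho_i$, its ladder locations $(\nu_i)$, and the block heights $H_i$, using the Iglehart bound $\textbf{P}(\max_{j\ge 0}V(j)\ge h)\asymp e^{-\kappa h}$ rather than Kesten's renewal theorem. Part (a) and the variance \emph{lower} bound go through along your lines.

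Your plan for the variance \emph{upper} bound, however, has a real gap: the pointwise comparison $\textnormal{Var}_\omega(\tau_k)\le C(1+W_k^2)$ with a \emph{deterministic} $C$ is false. From the exact formula
\[
\textnormal{Var}_\omega(\tau_k)\;=\;4(W_k+W_k^2)\;+\;8\sum_{i<k}\Bigl(\prod_{l=i+1}^{k}\rho_l\Bigr)(W_i+W_i^2),
\]
take $W_{k-1}=M$ large and $\rho_k=\epsilon$ small: then $W_k=\epsilon(1+M)\approx\epsilon M$, while already the $i=k-1$ term contributes $8\epsilon M^2$, so $\textnormal{Var}_\omega(\tau_k)/W_k^2\ge 8/\epsilon$, which is unbounded. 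The paper does not attempt any such pointwise bound; instead it rearranges the double sum for $\textnormal{Var}_\omega T_n$ and, using the block decomposition and the ``good environment'' sets $B(n)\cap D(n,m)$ from Section~\ref{Potential}, arrives at $\textnormal{Var}_\omega T_n\le C(\ln n)^{10}\sum_{l\le n_0} e^{2H_l}+O(n(\ln n)^6)$ and then counts how many blocks of each height can occur. Your approach can be repaired: swapping the order of summation gives $\textnormal{Var}_\omega T_n\le C\sum_{i<n}(1+R_i)(1+W_i^2)+O_\omega(1)$, where $R_i:=\sum_{j>i}\prod_{l=i+1}^{j}\rho_l$ is the forward analogue of $W_i$, independent of $W_i$ and with the same Kesten tail index $\kappa$. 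Since $W_i^2$ has the heavier tail index $\kappa/2$, Breiman's lemma shows $(1+R_i)W_i^2$ still has tail index $\kappa/2$, and then your truncation argument applies. This is not, however, the reduction you proposed, and verifying it is essentially as heavy as the step you flagged as the main obstacle.
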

Note that because we consider an i.i.d.~environment, the shift $\Theta$ on the product space (given by $\Theta\omega(i) = \omega(i+1)$) is ergodic with respect to $\textbf{P}$. Therefore, Birkhoff's ergodic theorem yields the following stronger statements for the cases in which the annealed expectation and the annealed variance, respectively, exist: \\ 
for $\kappa>1$ we have
\begin{align}
	\lim_{n\to\infty} \frac{E_\omega T_n}{n}\ &=\ \lim_{n\to\infty} \frac1n \sum_{j=0}^{n-1} E_{\Theta^{j}\omega} T_1 \ =\ \mathbb{E} T_1 \ \ \textbf{ P}-a.s. \label{Ergodensatz}
\intertext{ and for $\kappa>2$ we get}
	\lim_{n\to\infty} \frac{\textnormal{Var}_\omega T_n}{n}\ &=\ \lim_{n\to\infty} \frac1n\sum_{j=0}^{n-1} \textnormal{Var}_{\Theta^{j}\omega}(T_1)\ =\ \textbf{E}(\textnormal{Var}_\omega T_1)\ \ \textbf{ P}-a.s.  \label{Ergodensatz2}
\end{align}

There are general cutoff results for birth-and-death-processes, see \cite{DLP}, but they involve the spectral gap and its behaviour is not known for our RWRE sequence. On the other hand, combining our statements with these
general results, one gains information about the spectral gap of RWRE, we refer to \cite{Ko}.\\
The paper is organized as follows. In Section \ref{Preliminaries}, we relate the hitting times of the RWRE to the hitting times of the lazy RWRE. Section \ref{Potential} gives estimates on the environment, using its (well-known) description with the help of the potential and a block decomposition along the lines of \cite{PZ}. This allows to identify a set of ``typical'', ``good'' environments which will be used to analyse the hitting times.
In Section \ref{variance}, we derive Theorem \ref{BT1}.
Section \ref{cutoff} then uses this result to prove  Theorems \ref{CT2} - \ref{mixing time}, establishing the fact that the RWRE exhibits a cutoff in the ballistic regime and does not exhibit a cutoff in (the interior of) the subballistic 
regime.

\section{Preliminaries}\label{Preliminaries} 
Recall \eqref{rhodef} and define, for $i\in\N$,
\begin{align}
W_i:=\ \sum_{j=-\infty}^i \prod_{k=j}^{i} \rho_k. \label{W}
\end{align}
Further, for $i\in\N$,
\begin{align}
	W^0_i:=\ \sum_{j=1}^i \prod_{k=j}^i \rho_k \label{W_0}
\end{align}
Recall \eqref{T_ndef}. One can recursively compute an explicit formula for the quenched expectation of $T_n$ as a function of the environment (cf.~(2.1.14) in \cite{Z}), and if we assume $\textbf{E}\ln\rho_0 < 0$, we have (cf.~\cite{So})
\begin{equation}\label{Expectation}
	E_\omega^i T_{i+1}\ = 1 + 2 W_i\ <\ \infty\ \ \textbf{ P}\text{-}a.s.
\end{equation}
and therefore $E_{\omega^n}^i T_{i+1} = 1 + 2W^0_i$ for $i\in \{1, \ldots n-1\}$. Note that since $(\rho_k)$ is an i.i.d.~sequence,
\begin{equation}
	\mathbb{E} T_1\ = 1 + 2 \textbf{E} W_0 = 1 + \sum_{k=1}^{\infty} \left(\textbf{E} \rho_0\right)^k 
\end{equation}
which implies \eqref{positive Geschwindigkeit}.\\
Let $T_n^Y:=\ \min\{l\in\N_0\ :\ Y^n_l = n\} $ be the first hitting time of position $n$ of the lazy RWRE $(Y^n)$.
The next lemma relates the quenched expectation and quenched variance of $T_n^Y$ with the corresponding quantities of $T_n$.
\begin{lem} \label{Expectation lRWRE}
	We have 
\begin{enumerate}
	\item[(a)]  $\displaystyle E_\omega T_n^Y \ = \ 2 E_\omega T_n$,
	\item[(b)]  $\displaystyle \textnormal{Var}_\omega T_n^Y\ =\ 4\textnormal{Var}_\omega T_n\ + 2 E_\omega T_n.$
\end{enumerate}
\end{lem}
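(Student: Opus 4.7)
The plan is to realize the lazy chain $(Y^n_k)$ as the non-lazy chain $(X_k)$ subordinated to an independent sequence of geometric waiting times. Concretely, at each discrete time step the lazy walk first flips an independent fair coin: with probability $1/2$ it stays in place, with probability $1/2$ it takes a step governed by the environment transitions $(\omega^n_i, 1-\omega^n_i)$. Declaring a step ``active'' when the walk actually moves, the successive gaps between active steps (including the active step itself) are i.i.d. with the geometric distribution $\mathbf{P}(N=k) = 2^{-k}$, $k\ge 1$, and, conditionally on the environment, the chain observed at active times has exactly the law of $(X_k)$ under $P_{\omega^n}$. Hence, denoting by $N_1,N_2,\ldots$ these i.i.d. geometric variables (independent of everything in the embedded chain), we obtain the identity in distribution
\begin{equation*}
	T_n^Y \ \stackrel{d}{=}\ \sum_{i=1}^{T_n} N_i
\end{equation*}
under $P_{\omega^n}$, with $E_{\omega^n} N_1 = 2$ and $\mathrm{Var}_{\omega^n}(N_1)=2$.

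Part (a) then follows from Wald's identity: since $T_n$ is a stopping time for the embedded chain and the $N_i$ are i.i.d. and independent of $T_n$,
\begin{equation*}
	E_{\omega} T_n^Y \ =\ E_{\omega}(N_1) \cdot E_{\omega} T_n \ =\ 2 E_{\omega} T_n.
\end{equation*}
For part (b), I would apply the law of total variance conditioning on $T_n$:
\begin{equation*}
	\mathrm{Var}_\omega T_n^Y \ =\ E_\omega\!\left[\mathrm{Var}_\omega(T_n^Y\,|\,T_n)\right] \ +\ \mathrm{Var}_\omega\!\left(E_\omega[T_n^Y\,|\,T_n]\right).
\end{equation*}
Because the $N_i$ are i.i.d. and independent of $T_n$, the conditional variance equals $T_n \cdot \mathrm{Var}_\omega(N_1) = 2 T_n$ and the conditional expectation equals $2 T_n$, which gives
\begin{equation*}
	\mathrm{Var}_\omega T_n^Y \ =\ 2 E_\omega T_n \ +\ 4\, \mathrm{Var}_\omega T_n,
\end{equation*}
as required.

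The only step that needs any care is the justification of the subordination representation, in particular verifying that the geometric gaps are independent of the embedded non-lazy chain under the quenched measure. This is immediate from the product structure of the transition kernel in \eqref{lRWRE}, which factorizes into an independent ``move/stay'' Bernoulli and, conditionally on moving, the non-lazy step. Everything else is a direct application of Wald's identity and the standard formula for the variance of a random sum, so there is no substantive obstacle.
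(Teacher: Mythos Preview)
Your proof is correct and is essentially the same as the paper's. The paper also conditions on $T_n$ and observes that $T_n^Y$ given $T_n=k$ is the waiting time $\tau_k$ for the $k$th success in i.i.d.\ fair coin flips; it then quotes the negative-binomial moments $E_\omega\tau_k=2k$ and $E_\omega\tau_k^2=2k+4k^2$ directly, whereas you decompose $\tau_k$ as $\sum_{i=1}^k N_i$ and invoke Wald's identity and the law of total variance. These are two phrasings of the identical computation.
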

\begin{proof}
For $k\in\N$ we define 
\begin{equation}\label{Zdef}
Z_k : = \1{Y^n_{k-1} \neq Y^n_k}
\end{equation}
and
$$\tau_k:=\ \inf\left\{l\in\N\ :\ \sum_{j=1}^l Z_j = k\right\}$$
and note that $\tau_k$ is negative binomially distributed with parameters $k$ and $\frac12$ (waiting for the $k$th success). Thus, we have $$E_\omega\left[T_n^Y  \Big| T_n =k\right]\ =\ E_\omega \tau_k\ =\ 2k $$
and we get
\begin{align*}
	E_\omega T_n^Y\ =\ E_\omega \left[E_\omega\left[T_n^Y  \Big| T_n \right]\right] \ =\ 2 E_\omega T_n.
\end{align*}
To obtain \textit{(b)}, we first consider
\begin{align*}
	E_\omega\left[\left(T_n^Y\right)^2 \Big| T_n = k\right]\ &=\ E_\omega \Big[\left(\tau_k\right)^2\Big]\ =\ 2k + 4k^2 
\end{align*}
and we therefore get together with \textit{(a)}
\begin{align*}
	\text{Var}_\omega T_n^Y\ &=\ E_\omega \left[E_\omega\left[\left(T_n^Y\right)^2 \Big| T_n \right]\right] -  \Big(E_\omega T_n^Y\Big)^2\	
	=\ 4 \text{Var}_\omega T_n +  2 E_\omega T_n. 
\end{align*}
\end{proof}
\noindent In the same way,
\begin{align}
	 E_{\omega^n} T_n^Y \ = \ 2 E_{\omega^n} T_n\ \ \text{ and }\ \  \textnormal{Var}_{\omega^n} T_n^Y\ =\ 4\textnormal{Var}_{\omega^n} T_n\ + 2 E_{\omega^n} T_n. \label{Expectation lRWRE2}
\end{align}

\section{Environment: decomposition with the potential}\label{Potential}
In this section, we analyse the potential associated with the environment, which was introduced by Sinai in \cite{S}. A good understanding of the potential will be key to analyse the quenched expectation and quenched variance of $T_n$ in the next section. The potential, denoted by $(V(x))_{x\in\Z}$, is a function of the environment $\omega$ defined in the following way:
\begin{align*}
	V(x):= \begin{cases}
							\displaystyle -\sum_{i=x}^{-1} \ln \rho_i &\text{if } x\le -1,\\
							\displaystyle 0\ &\text{if } x=0, \vphantom{\sum_i^k} \\
							\displaystyle\sum_{i=0}^{x-1} \ln \rho_i &\text{if } x\ge 1.
				 \end{cases}
\end{align*}

Due to Assumption 1 ($\textbf{E}\ln\rho_0<0$), the potential is a random walk with negative drift. But there are some ``blocks" of the environment where the potential is increasing. We will see that the height of these increases depends on $\kappa$ and that the RWRE needs most of the time before hitting position $n$ to cross the highest increase of the potential in the interval $[0,n]$. 
Note that for fixed environment, the Markov chain $(X_n)$ is reversible and the conductance of an edge is given by $C(x-1, x) = \exp(-V(x))$, see \cite{FGP}.
To see the influence of the parameter $\kappa$ on the shape of the potential see Figure \ref{potentialkappagross} and Figure \ref{potentialkappaklein} on pages \pageref{potentialkappagross} and \pageref{potentialkappaklein}. 
\begin{figure}[!htbp]
\includegraphics[viewport=95 537 350 755, scale =1.08]{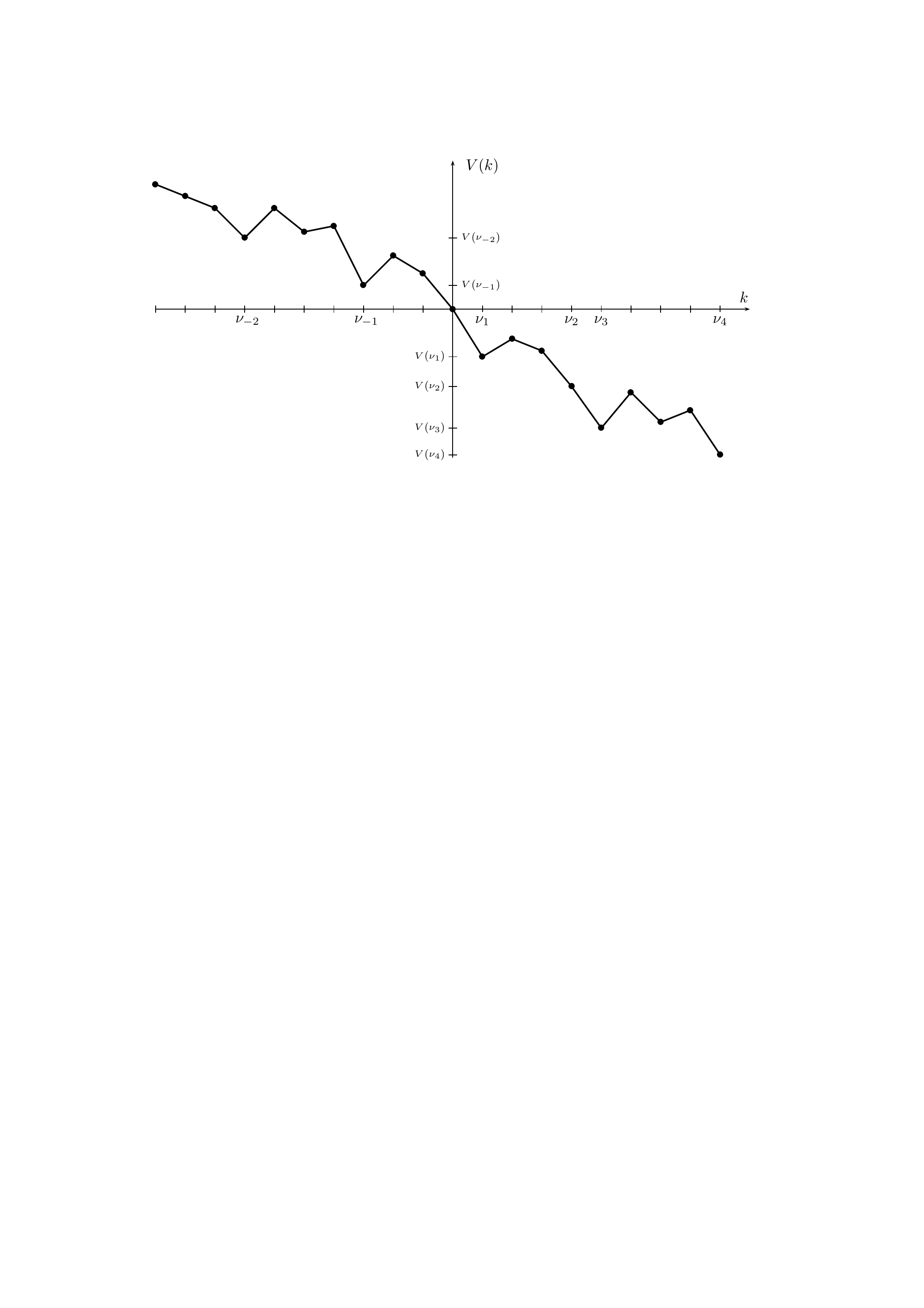}
  \caption{On the definition of the ladder locations.} \label{valley}
\end{figure}
In the following, we use the partition of the potential in blocks introduced in \cite{PZ} and \cite{ESZ09} (cf.~Figure \ref{valley}). We define ``ladder locations" $(\nu_i(\omega))_{i\in\Z}$ of the environment $\omega$ by 
\begin{align}
	\nu_i(\omega):=\ \begin{cases}
								\sup\left\{n < \nu_{i+1}(\omega) : V(n) < V(k) \ \ \forall\ k<n\right\} &\text{for } i \le  -1, \vphantom{\displaystyle \frac12}\\
								 0 &\text{for } i = 0,\vphantom{\displaystyle\frac12}\\
								\inf\left\{n > \nu_{i-1}(\omega) : V(n) < V(\nu_{i-1}(\omega)) \right\} &\text{for } i \ge 1. \vphantom{\displaystyle\frac12}
						\end{cases}	\label{B3}
\end{align} 
If no confusion arises, we often drop the dependence of $\omega$ and simply write $(\nu_i)_{i\in\Z}$. The portion of the environment $[\nu_i,\nu_{i+1})$ is called the $i$th block. Note that the block from $\nu_{-1}$ to $-1$ is different from all the other blocks. Further, we define for $i\in\Z$
\begin{align}
	H_i:=\  \max_{\nu_i\le j<\nu_{i+1}} \left(V(j) - V(\nu_i)\right)\ =\ \max_{\nu_i\le j<k< \nu_{i+1}} \left(V(k) - V(j)\right) \label{H}
\end{align}
as the height of the ith block and for $n\in\N$ let
\begin{align}
	n_0:=\ \max\{l\in\N_0\ :\ \nu_l\le n\} \label{j_0}
\end{align}
denote the number of the block to which $n$ belongs. \\[11pt]
Later, we estimate the height of blocks using results on the asymptotic of the maximum of random walks with negative drift. First, let us assume that the distribution of $\ln \rho_0$ is non-lattice. Then, due to Assumptions 1 and 2, Theorem 1 in \cite{I} yields constants $0<K_1<K_2$ such that for $h>0$ we have
\begin{align}
	K_1\exp(-\kappa\cdot h)\ \le\ \textbf{P}\left(S\ge h\right)\ \le\ K_2\exp(-\kappa\cdot h) \label{PL4.0201},
\end{align}
where $S :=\ \displaystyle \max_{j\ge 0} V(j)$ denotes the maximum of the potential on $\N_0$. Next, let the distribution of $\ln \rho_0$ be concentrated on $x+ y\Z$ for $x\in\R, y\in\R^{>0}$. Therefore, the potential is a Markov chain with i.i.d.~increments of a lattice distribution. Using again Assumptions 1 and 2, we get for the case in which the potential is aperiodic,
due to E19.4 in \cite{Sp} with $r = \exp( \kappa)$ 
\begin{align}
	K'_1\exp(-\kappa\cdot \left( y\cdot n+x\right))\ \le\ \textbf{P}\left(S\ge y\cdot n + x\right)\ \le\ K'_2\exp(-\kappa\cdot\left( y\cdot n+x\right)) \label{PL4.0202},
\end{align}
for $n\in\N$ and constants $0<K'_1<K'_2$. If the potential $V(\cdot)$ is a periodic Markov chain with period $d\in\N$, we still have that $(V(nd+k))_{n\in\N_0}$ is aperiodic for every $k\in\{0,...,d-1\}$ and therefore we get the same asymptotic as in \eqref{PL4.0202} using the minimum and the maximum, respectively, of the appearing constants. Now combining \eqref{PL4.0201} and \eqref{PL4.0202}, we get that under Assumptions 1 and 2 there exist constants $0< \widetilde{C}_1< C_1$ such that we have for all $h>0$
\begin{align}
	\widetilde{C}_1\exp(-\kappa\cdot h)\ \le\ \textbf{P}\left(S\ge h\right)\ \le\ C_1\exp(-\kappa\cdot h) \label{PL4.02}.
\end{align}
In the next lemmata, we identify ``typical" and ``good" subsets of the environment, which simplify calculations in the following.
First, we show that it is very unlikely for the potential to stay at a certain level for a long time because of the negative drift. We define  
\begin{align}
	B_1(n):=\ \Big\{&\nexists\ k\in\N,\ i,j\in\{-n,...,n\}\ :\  j-i \ge k(\ln n)^2,\ V(j) > V(i) - k \ln n\Big\}\label{B1(n)}
\end{align} 
as the set of environments for which on the interval $[-n,n]$ the potential decreases at least by $k\ln n$ every $\lceil k(\ln n)^2\rceil$ steps. In particular, we have for environments $\omega\in B_1(n)$ that all blocks in the interval $[-n,n]$ are smaller than $(\ln n)^2$.
\begin{lem} \label{PL10}
	We have 
	\begin{align*}
		\textbf{P}(B_1(n)^\textnormal{c})\ =\ O\left(n^{-2}\right).
	\end{align*}
\end{lem}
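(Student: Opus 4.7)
The plan is to rewrite $B_1(n)^c$ as a union over triples $(k,i,j)$, apply a Chernoff-type estimate to each event separately, and then take a union bound.

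First I would observe that $B_1(n)^c$ is the event that there exist $k\in\N$ and $i<j$ in $\{-n,\ldots,n\}$ with $j-i\geq k(\ln n)^2$ and $V(j)-V(i) > -k\ln n$. Since $V(j)-V(i)=\sum_{l=i}^{j-1}\ln\rho_l$ is the sum of $m:=j-i$ i.i.d.\ copies of $\ln\rho_0$, the task reduces to bounding
\[
\textbf{P}\Bigl(\sum_{l=1}^m \ln\rho_l > -k\ln n\Bigr)
\]
for each $m\geq k(\ln n)^2$, and then summing over the choices of $(k,i,j)$.

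The Chernoff step relies on finding a tilt $\lambda_0\in(0,\kappa)$ with $c:=\textbf{E}[\rho_0^{\lambda_0}]<1$. The map $f(\lambda):=\textbf{E}[\rho_0^\lambda]$ is convex on $[0,\kappa]$ with $f(0)=f(\kappa)=1$ by Assumption 2, and it cannot be identically $1$ because that would force $\rho_0\equiv 1$ almost surely, contradicting $\textbf{E}\ln\rho_0<0$ from Assumption 1; hence $f<1$ on the interior of $[0,\kappa]$. With such $\lambda_0$ fixed, Markov's inequality applied to $\exp\bigl(\lambda_0\sum_{l=1}^m \ln\rho_l\bigr)$ yields, whenever $m\geq k(\ln n)^2$,
\begin{align*}
\textbf{P}\Bigl(\sum_{l=1}^m \ln\rho_l > -k\ln n\Bigr)\ \leq\ e^{\lambda_0 k\ln n}\,c^m\ \leq\ \exp\bigl(-k\ln n\cdot[(\ln n)\ln(1/c) - \lambda_0]\bigr),
\end{align*}
which for $n$ large enough is bounded by $n^{-5k}$ (the bracket exceeds $5$ once $\ln n$ is large).

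Finally, a crude union bound over the $O(n^2)$ pairs $(i,j)\in\{-n,\ldots,n\}^2$ and over $k\geq 1$ gives
\begin{align*}
\textbf{P}(B_1(n)^c)\ \leq\ \sum_{k\geq 1} O(n^2)\cdot n^{-5k}\ =\ O(n^{-3})\ =\ O(n^{-2}),
\end{align*}
for $n$ sufficiently large; for small $n$ the bound is trivial after adjusting the implicit constant. The only slightly delicate point is producing the tilt $\lambda_0$, which follows cleanly from the convexity of $f$ together with Assumptions 1 and 2; once that is in hand, the rest is a standard large-deviations union-bound argument and I do not anticipate any real obstacle.
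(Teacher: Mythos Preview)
Your proof is correct, and it takes a somewhat different (and arguably cleaner) route than the paper's.

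The paper fixes the starting point $i$, sums only over $k$ and $i$, and for each $k$ decomposes the event $\{\exists\, j\ge k(\ln n)^2: V(j)>-k\ln n\}$ via a waypoint at $\lfloor k(\ln n)^2\rfloor$: either the potential at the waypoint is already above $-(k+4/\kappa)\ln n$ (bounded by Cram\'er's theorem), or the potential rises by more than $(4/\kappa)\ln n$ after the waypoint (bounded by the tail estimate \eqref{PL4.02} on $S=\max_{j\ge 0}V(j)$). This two-part splitting is what allows the paper to avoid a union over $j$.

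You instead brute-force the union over all triples $(k,i,j)$ and apply a single exponential Markov bound with a hand-picked tilt $\lambda_0\in(0,\kappa)$ satisfying $\textbf{E}[\rho_0^{\lambda_0}]<1$. The existence of this tilt follows exactly as you argue from the convexity of $\lambda\mapsto\textbf{E}[\rho_0^\lambda]$ and Assumptions~1--2 (equivalently, $f'(0)=\textbf{E}\ln\rho_0<0$ forces $f<1$ just to the right of $0$). Your approach avoids both the waypoint decomposition and any appeal to \eqref{PL4.02}, at the cost of an extra factor $O(n)$ in the union bound --- which is harmless because your single-event estimate $n^{-5k}$ is already much stronger than needed. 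The paper's approach, by contrast, keeps Cram\'er's theorem as a black box and never needs to identify an explicit tilt.
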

\begin{proof}
First, we note that 
\begin{align}
	&\left\{\exists\ j\ge k(\ln n)^2\   \vphantom{\frac12}:\  V(j) > - k \ln n\right\} \nonumber\\
	\subseteq	 &\left\{ V\big(\lfloor k (\ln n)^2\rfloor\big) > - \left(k+\frac4\kappa\right) \ln n\right\}   \cup \left\{\max_{j\ge k\left(\ln n\right)^2} \left(V(j) - V\big(\lfloor k(\ln n)^2\rfloor \big) \right) > \frac4\kappa \ln n \right\}  \label{PL10.1}
\end{align}
because either $V\big(\lfloor k (\ln n)^2\big)\rfloor$ is larger than  $-\left(k+\frac4\kappa\right) \ln n$ or the potential has to increase more than $\frac4\kappa \ln n$ afterwards. Using \eqref{PL4.02}, we get for arbitrary $l\in \N$
\begin{align*}
	\textbf{P} \left(\max_{j\ge l} \left(V(j) - V(l)\right) > \frac4\kappa \ln n\right)\ =\ O\left(n^ {-4}\right).
\end{align*}
This with \eqref{PL10.1} yields
\begin{align}
		\textbf{P}(B_1(n)^\textnormal{c}) \vphantom{\sum}\
		=\ &\textbf{P}\left(\exists\ k\in\N,\ i,j\in\{-n,...,n\}\ : \ j-i \ge k(\ln n)^2,\ V(j) > V(i) - k \ln n\right)\vphantom{\sum_i^k} \nonumber\\
		\le\ &\sum_{k=1}^{\left\lfloor \frac{2n}{(\ln n)^2}\right\rfloor} 2n \textbf{P}\left(\exists\ j\ge k(\ln n)^2  \ : \ V(j) > - k \ln n\right) \displaybreak[0]\nonumber\\
		\le\ & \sum_{k=1}^{\left\lfloor \frac{2n}{(\ln n)^2}\right\rfloor} 2n \Bigg(\textbf{P} \left(V\Big(\lfloor k (\ln n)^2\rfloor\Big) > - \left(k+\frac4\kappa\right) \ln n\right)\nonumber\\
		&\hspace{60pt} +\textbf{P}\left(\max_{j\ge k\left(\ln n\right)^2} \left(V(j) - V\Big(\lfloor k(\ln n)^2\rfloor\Big)\right) > \frac4\kappa \ln n\right)\Bigg)\nonumber\\
		\le\ & \sum_{k=1}^{\left\lfloor \frac{2n}{(\ln n)^2}\right\rfloor} 2 n \textbf{P} \left(V\Big(\lfloor k (\ln n)^2\rfloor\Big) -  \lfloor k(\ln n)^2\rfloor \textbf{E}\ln \rho_0\ >\  \frac{|\textbf{E} \ln \rho_0|}2 \lfloor k(\ln n)^2\rfloor \right)  +O\left(n^{-2}\right).\vphantom{\sum^k} \label{PL10.2}
\end{align}
Since the potential is a sum of i.i.d.~random variables with some finite positive exponential moments due to Assumption 2 and negative expectation due to Assumption 1, we can apply Cramér's Theorem (cf. Theorem 2.2.3 in \cite{DZ}) to obtain an upper bound for \eqref{PL10.2}, that is
\begin{align*}
	\textbf{P}(B_1(n)^\textnormal{c})\ \le\ & 4 n^2 \exp(-c (\ln n)^2) + O\left(n^{-2}\right)
	\end{align*}
for a constant $c>0$, and this finishes the proof.
\end{proof}
Further, we show that the first $n$ appearing blocks on the right side of $0$ and on the left side of $0$ are 
not too wide.
\begin{lem}\label{PL11}
	We have $$\textbf{P}(B_2^{\textnormal{c}}) = O\left(n^{-2}\right),$$	
	where $$B_2(n):=\ \left\{ -2\bar{\nu}n \le \nu_{-n}, \nu_n \le 2 \bar{\nu}n  \right\} \ \ \text{ with } \bar{\nu}:= \ \textbf{E} \nu_1.$$
\end{lem}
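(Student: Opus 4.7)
The plan is to bound each of the two events making up $B_2(n)^c$ separately and show they have probability $O(n^{-2})$ (in fact exponentially small in $n$).

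For the right-hand tail $\textbf{P}(\nu_n > 2\bar\nu n)$, I would use that the block lengths $\xi_i := \nu_{i+1} - \nu_i$ for $i \ge 0$ form an i.i.d.\ sequence under $\textbf{P}$, by the strong Markov property applied at each descending ladder epoch of the random walk $V$, with common mean $\bar\nu = \textbf{E}\nu_1$. To control the tail of $\xi_0$, observe that $\{\xi_0 > k\} \subseteq \{V(k) \ge 0\}$, and a Chernoff bound gives
\[
\textbf{P}(V(k) \ge 0) \le (\textbf{E}\rho_0^s)^k
\]
for any $s > 0$. Convexity of $s \mapsto \textbf{E}\rho_0^s$ together with $\textbf{E}\rho_0^0 = \textbf{E}\rho_0^\kappa = 1$ (Assumption 2) implies $\textbf{E}\rho_0^s < 1$ for every $s \in (0,\kappa)$, so $\xi_0$ admits an exponential moment. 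Cramér's theorem applied to $\nu_n = \sum_{i=0}^{n-1}\xi_i$ then yields $\textbf{P}(\nu_n > 2\bar\nu n) \le e^{-cn}$ for some $c > 0$, which is certainly $O(n^{-2})$.

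For the left-hand tail $\textbf{P}(\nu_{-n} < -2\bar\nu n)$, I would pass to the time-reversed walk $\tilde V(j) := V(-j)$, $j \ge 0$, whose steps $-\ln\rho_{-j-1}$ are i.i.d.\ with \emph{positive} drift $-\textbf{E}\ln\rho_0 > 0$. The left-ladder locations $|\nu_{-i}|$ are precisely the successive strict right-minima of $\tilde V$, i.e.\ those $j \ge 0$ with $\tilde V(j) < \tilde V(j')$ for all $j' > j$. By the Markov property of $\tilde V$ applied at each such right-minimum, the spacings $|\nu_{-(i+1)}|-|\nu_{-i}|$ for $i \ge 1$ are i.i.d.\ with finite mean $\bar\mu$ (positivity of the density of right-minima follows from Assumption 1 and the standard fact that a walk with positive drift escapes to $+\infty$ with positive minimum). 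Their exponential tail is obtained from another Chernoff estimate applied to $\tilde V$, again using $\textbf{E}\rho_0^s < 1$ for $s \in (0,\kappa)$ to bound the probability that $\tilde V$ dips back down. Cramér's theorem once more yields $\textbf{P}(|\nu_{-n}| > 2\bar\nu n) \le e^{-cn}$, where the generous factor $2$ in the definition of $B_2(n)$ absorbs any discrepancy between $\bar\nu$ and $\bar\mu$ (and could, if necessary, be enlarged without affecting downstream arguments).

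The main obstacle is the asymmetry between the two definitions of ladder location: the left side is defined in terms of strict running minima from $-\infty$, whereas the right side uses relative descending ladder epochs. One must verify carefully that the left-side decomposition really produces genuinely i.i.d.\ increments with exponential tails, which is the only non-routine structural point. Once that is in place, a union bound over the two sides gives $\textbf{P}(B_2(n)^c) = O(e^{-cn}) = O(n^{-2})$, with room to spare.
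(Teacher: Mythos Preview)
Your overall plan---exponential tails for the individual block lengths via a Chernoff/Cram\'er bound on $(\ln\rho_k)$, then Cram\'er again for the sum---is exactly the paper's. The right-hand side is identical to the paper's argument.

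For the left-hand side, however, your time-reversal detour introduces an unnecessary difficulty, and the justification you give is not correct as stated: strict right-minima of $\tilde V$ are \emph{not} stopping times (they depend on the entire future of $\tilde V$), so you cannot invoke the Markov property at them. The paper avoids this by staying in forward time. The key observation---which also resolves the asymmetry you worried about---is that the strict left-minima of $V$ are precisely the \emph{forward} strict descending ladder epochs: if $\nu_{i-1}$ is a strict left-minimum then $\nu_i=\inf\{n>\nu_{i-1}:V(n)<V(\nu_{i-1})\}$, exactly as for $i\ge1$. Hence $(\nu_i-\nu_{i-1})_{i\le-1}$ are i.i.d.\ with the \emph{same} law as $\nu_1$ (so your $\bar\mu=\bar\nu$; no discrepancy to absorb), by the ordinary strong Markov property applied to $V$. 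The one genuinely special piece is the block containing the origin, i.e.\ $|\nu_{-1}|$ itself; the paper does not fold this into the Cram\'er sum but bounds it separately via the already established Lemma~\ref{PL10}, which gives $\textbf{P}(|\nu_{-1}|>(\ln n)^2)\le\textbf{P}(B_1(n)^c)=O(n^{-2})$. Combining this with $\textbf{P}\big(\sum_{i=-n+1}^{-1}(\nu_i-\nu_{i-1})>\tfrac32\bar\nu n\big)=O(n^{-2})$ yields the left-hand bound.
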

\begin{proof}
	First, we show that $(\nu_i-\nu_{i-1})$ has exponential tails for all $i\in \Z^{\ne 0}$. Again using Cramér's Theorem for the sequence $(\ln \rho_k)_{k\in\Z}$, we get for large $x>0$, $i\in\Z^{\ne 0}$
	\begin{align*}
		\textbf{P} \left(\nu_i - \nu_{i-1}\ >\ x\right)\ &\le\ \textbf{P}\Big(\big| V(\lfloor x\rfloor) - (\lfloor x\rfloor )\textbf{E} \ln\rho_0\big|\ \ge\ |\textbf{E} \ln\rho_0|\lfloor x\rfloor\Big) \vphantom{\frac12} \ \le\ \exp( - c\cdot x) \vphantom{\frac12}
	\end{align*}
for a constant $c>0$. Thus, we have $$\textbf{E} \exp(\widetilde{c} \nu_1)\ <\ \infty \ \ \ \forall\ \widetilde{c} < c$$ 
and therefore we can also apply Cramér's Theorem for the sequence $(\nu_i-\nu_{i-1})_{i\in\Z^{\ne 0}}$ to obtain
	\begin{align}
       \textbf{P} \left( \sum_{i=1}^n (\nu_i-\nu_{i-1}) > 2 \bar{\nu}n\right) +        \textbf{P} \left( \sum_{i=-n+1}^{-1} (\nu_i-\nu_{i-1}) > \frac32 \bar{\nu}n\right)\ =\  O\left(n^{-2}\right). \label{PL11.1}
	\end{align}
Furthermore, we have due to Lemma \ref{PL10} 
$$\textbf{P}\left( \nu_{-1} < -(\ln n)^2)\right)\ \le\ \textbf{P}\left(B_1(n)^{\textnormal{c}}\right)\ =\ O(n^{-2}),$$
and this together with \eqref{PL11.1} finishes the proof.
\end{proof}
Next, we are interested in an upper and a lower lower bound for the highest block in the interval $[-n,n]$. We define
\begin{equation}\label{obenauch}
	B_3(n):=\left\{ \max_{-n\le i\le n} \max_{k\ge i}\left(V(k) - V(i)\right)\ \le\ \frac{1}{\kappa}\Big(\ln n + 2\ln\ln n\Big)\right\}
\end{equation}
and
\begin{equation}\label{jawohl}
B_4(n):=\left\{\max_{-n\le i\le n} \max_{k\ge i} (V(k)- V(i))\ > \frac1\kappa(\ln n - 4\ln\ln n)\right\}.
\end{equation}
\begin{lem}\label{PL3}
	For $\textbf{P}$-almost every environment $\omega$ there exists a $N(\omega)$, such that $\omega \in B_3(n)\cap B_4(n)$ for all $n\ge N(\omega)$.
\end{lem}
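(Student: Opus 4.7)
The statement asks us to sandwich the maximal ascent $M_n:=\max_{-n\le i\le n}\max_{k\ge i}(V(k)-V(i))$ between $\kappa^{-1}(\ln n-4\ln\ln n)$ and $\kappa^{-1}(\ln n+2\ln\ln n)$ for every large $n$, almost surely. I would treat the two sides independently via Borel--Cantelli; the key structural fact for the upper side is that $n\mapsto M_n$ is non-decreasing.

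\textbf{Upper bound ($B_3(n)$).} Translation invariance of $\textbf{P}$, a union bound over the $2n+1$ starting points $i\in\{-n,\ldots,n\}$, and \eqref{PL4.02} yield
$$\textbf{P}(M_n>h)\ \le\ (2n+1)\,C_1\,e^{-\kappa h}.$$
At the $B_3$-threshold this is only $O((\ln n)^{-2})$ and thus not summable, but along the dyadic subsequence $n=2^k$ and the \emph{smaller} threshold $h_k:=\kappa^{-1}(\ln 2^{k-1}+2\ln\ln 2^{k-1})$ one gets $\textbf{P}(M_{2^k}>h_k)=O(k^{-2})$. By Borel--Cantelli, $M_{2^k}\le h_k$ for all $k\ge K(\omega)$ on a full-measure event. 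Monotonicity of both $n\mapsto M_n$ and $n\mapsto\kappa^{-1}(\ln n+2\ln\ln n)$ then transfers the bound to every $n\in[2^{k-1},2^k]$, which is exactly $B_3(n)$.

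\textbf{Lower bound ($B_4(n)$).} Here I would construct many (approximately) independent chances for $V$ to climb high. Partition $[0,n]$ into $m_n:=\lfloor n/\ell_n\rfloor$ disjoint windows of length $\ell_n:=(\ln n)^2$ and set, with $h:=\kappa^{-1}(\ln n-4\ln\ln n)$,
$$E_j:=\Bigl\{\max_{0\le k\le\ell_n}\bigl(V(j\ell_n+k)-V(j\ell_n)\bigr)\ \ge\ h\Bigr\},\qquad 0\le j<m_n.$$
The events $E_j$ are i.i.d.\ by the product structure of the environment and $\bigcup_j E_j\subseteq B_4(n)$. To bound $\textbf{P}(E_0)$ from below I would compare with $\textbf{P}(S\ge h)$: splitting on $\{V(\ell_n)\le-(\mu/2)\ell_n\}$ with $\mu:=-\textbf{E}\ln\rho_0>0$, Cram\'er's theorem (whose hypotheses are supplied by Assumption~2) gives
$$\textbf{P}\!\bigl(\max_{k>\ell_n}V(k)\ge h\bigr)\ \le\ e^{-c\ell_n}+C_1 e^{-\kappa h}e^{-\kappa\mu\ell_n/2}\ =\ o(e^{-\kappa h}),$$
so $\textbf{P}(E_0)\ge\tfrac12\widetilde C_1 e^{-\kappa h}$ for all large $n$ by \eqref{PL4.02}. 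Independence then yields
$$\textbf{P}(B_4(n)^{\mathrm{c}})\ \le\ (1-\textbf{P}(E_0))^{m_n}\ \le\ \exp\!\bigl(-\tfrac12\widetilde C_1\,m_n e^{-\kappa h}\bigr)\ =\ \exp(-c'(\ln n)^2),$$
which is summable, and a second Borel--Cantelli application concludes.

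The delicate point is the lower bound, specifically the calibration of $\ell_n$: the window must be long enough that truncating the infinite-horizon maximum $S$ to $[0,\ell_n]$ loses only $o(n^{-1}(\ln n)^4)$ of its probability, yet short enough to leave $m_n\gg\ln n$ independent windows in $[0,n]$. The scale $(\ln n)^2$ is essentially forced by this balance, and yields a super-polynomial tail for $\textbf{P}(B_4(n)^\mathrm{c})$.
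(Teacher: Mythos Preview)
Your argument is correct. The paper does not actually prove this lemma: it refers to Lemmas~3.4 and~3.5 of \cite{FGP} for the two inclusions, adding only the remark that the lower bound can be carried out under Assumptions~1--2 alone, via \eqref{PL4.02}, rather than under the stronger integrability condition used in \cite{FGP}. Your proposal supplies precisely such a self-contained argument: for $B_3(n)$ the union bound plus the dyadic interpolation is the standard way to upgrade an $O((\ln n)^{-2})$ tail to an eventually-always statement, and for $B_4(n)$ your construction of $\sim n/(\ln n)^2$ independent windows, together with the Cram\'er-based truncation showing that the finite-horizon maximum over $[0,(\ln n)^2]$ captures all but an $o(e^{-\kappa h})$ fraction of $\textbf{P}(S\ge h)$, is exactly the route the authors allude to. One cosmetic point: in your definition of $E_j$ you use $\ge h$ while $B_4(n)$ asks for a strict inequality, so $\bigcup_j E_j\subseteq B_4(n)$ is off by a null set in the non-lattice case and by a harmless shift in the lattice case; replacing $\ge h$ by $>h$ (or by $\ge h+1$) fixes this without affecting any estimate.
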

 For a proof of \eqref{obenauch},see Lemma 3.4 in \cite{FGP}. For a proof of \eqref{jawohl}, see Lemma 3.5 in \cite{FGP}. 
(In the proof of Lemma 3.5. in \cite{FGP}, an additional integrability assumption (see (1.3) in \cite{FGP}) is used, but
one can give a proof of \eqref{jawohl} based only on \eqref{PL4.02}).
 \vspace{11pt}\\
As a next step, we want to analyse the frequency of the appearance of blocks with certain heights. We therefore define for $0 < a < 1$ (recall \eqref{j_0} for the definition of $n_0$)
\begin{align*}
	D_n(a):=\ \left\{\bigg| \left\{ 0\le i\le n_{0} \ :\ H_i\ge \frac{a}{\kappa} \Big(\ln n + 2\ln\ln n\Big)\right\}\bigg| \ <\ n^{1-a}\right\}.
\end{align*}
In the next lemma, we show that asymptotically we do not have ``too many" high blocks:
\begin{lem}\label{PL4}
 For all $m\in\N$ we have
	\begin{align*}
		\textbf{P}(D(n,m)^c)\ =\ O\left(n^{-2}\right),
	\end{align*}
where 
\begin{align}
D(n,m):=\ \bigcap_{l=1}^{m-1} D_n\left(\frac{l}{m}\right). \label{D(n,m)}
\end{align}
\end{lem}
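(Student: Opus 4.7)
My plan is to control $\textbf{P}(D_n(l/m)^c)$ separately for each $l\in\{1,\dots,m-1\}$ using the i.i.d.\ structure of the block heights together with the exponential tail \eqref{PL4.02}, and then finish with a union bound over $l$.

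The first ingredient is the i.i.d.\ structure. By the strong Markov property of the potential random walk $V$ at each ladder location $\nu_i$ (with $\nu_0=0$), the shifted walks $(V(\nu_i+k)-V(\nu_i))_{k\ge 0}$ are independent copies of $V$ started from $0$, each observed up to its first strictly negative value. Hence the heights $(H_i)_{i\ge 0}$ are i.i.d.\ with common distribution that of $M:=\max_{0\le k<\tau}V(k)$, where $\tau:=\inf\{k\ge 1:V(k)<0\}$. Since $M\le S:=\max_{k\ge 0}V(k)$ pointwise, \eqref{PL4.02} gives
\[
\textbf{P}(H_i\ge h)\ \le\ C_1 e^{-\kappa h},\qquad h>0.
\]
Moreover $\nu_i-\nu_{i-1}\ge 1$ for each $i\ge 1$, so the deterministic bound $n_0\le n$ holds.

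For the main estimate, fix $l\in\{1,\dots,m-1\}$ and set $h_l:=\frac{l}{m\kappa}(\ln n+2\ln\ln n)$ and $p_l:=\textbf{P}(H_0\ge h_l)\le C_1 n^{-l/m}(\ln n)^{-2l/m}$. The count appearing in the definition of $D_n(l/m)$ is dominated by $N_l:=\sum_{i=0}^{n}\1{H_i\ge h_l}$, a Binomial$(n+1,q)$ variable with $q\le p_l$, whose mean is at most $Cn^{1-l/m}(\ln n)^{-2l/m}$. The target threshold $n^{1-l/m}$ therefore exceeds the mean by the diverging factor $(\ln n)^{2l/m}$, so Markov's inequality alone would only give $O((\ln n)^{-2l/m})$ and is insufficient. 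Applying instead the standard Chernoff bound $\textbf{P}(\mathrm{Bin}(N,p)\ge k)\le (eNp/k)^k$ with $k=\lceil n^{1-l/m}\rceil$ yields
\[
\textbf{P}(D_n(l/m)^c)\ \le\ \Bigl(\frac{eC}{(\ln n)^{2l/m}}\Bigr)^{\lceil n^{1-l/m}\rceil},
\]
which decays faster than any polynomial in $n$ for each fixed $l<m$. A union bound $\textbf{P}(D(n,m)^c)\le\sum_{l=1}^{m-1}\textbf{P}(D_n(l/m)^c)$ then produces $O(n^{-2})$, with a lot of room to spare.

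The substantive inputs---the exponential tail \eqref{PL4.02} and the i.i.d.\ decomposition via strong Markov---are both already available, so I anticipate no real obstacle. The $2\ln\ln n$ slack in the definition of $h_l$ appears to have been engineered precisely so that the Chernoff exponent is a positive power of $n$, which is what forces the bound to be super-polynomial at each level $l$ and hence beat $n^{-2}$ after the union bound.
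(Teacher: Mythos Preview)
Your proof is correct and follows essentially the same approach as the paper: reduce to a fixed $l$ via a union bound, stochastically dominate the count of high blocks by a binomial with parameters of order $n$ and $C_1(n(\ln n)^2)^{-l/m}$, and then apply a Chernoff-type bound to beat any polynomial. The paper uses the exponential Markov inequality in the form $\textbf{P}(B_n^l\ge k)\le e^{-k}\,\textbf{E}[e^{B_n^l}]$ and expands the binomial moment generating function, whereas you invoke the equivalent $(eNp/k)^k$ form of the Chernoff bound; both yield a bound of the shape $\exp\bigl(-c\,n^{1-l/m}\bigr)$, so the difference is purely cosmetic.
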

\begin{proof} 
	Because $m$ is fixed, it is enough to show 
$$\textbf{P}\left(D_n\left(\frac{l}{ m}\right)^\textnormal{c}\right) \ =\ O\left(n^{-2}\right)$$
for arbitrary $l,m\in \N$, $l <m$. \vspace{11pt}\\	
First, we note that the number of blocks with a height of more than $\frac{l}{\kappa m}\Big(\ln n+ 2\ln\ln n\Big)$ can stochastically be dominated by a binomial random variable $B_n^l$ with parameters $n$ and success probability $\textbf{P}\left(S\ge \frac{l}{\kappa m}\Big(\ln n+2\ln\ln n\Big)\right)$, where $S =\ \max_{j\ge 0} V(j)$ denotes the maximum of the potential on $\N_0$. Due to \eqref{PL4.02} we have 
\begin{align}
	\textbf{P}\left(S\ge \frac{l}{ m}\Big(\ln n+2\ln\ln n\Big)\right) \ &\leq \ C_1 \cdot\left(n\cdot (\ln n)^2\right)^{-\frac{l}{ m}}. \label{PL4.1} \vphantom{\left(\frac{l}{m}\right)^\textnormal{c}}
\end{align}
Therefore, we get
\begin{align*}	
	\textbf{P}\left(D_n\left(\frac{l}{m}\right)^\textnormal{c}\right)\ \le\ \textbf{P}\left(B_n^l \ge n^{1-\frac{l}{\kappa m}}\right).
\end{align*}
Now, using the exponential Markov inequality, this together with \eqref{PL4.1} yields:
\begin{align}
	\textbf{P}\left(D_n\left(\frac{l}{m}\right)^\textnormal{c}\right)\ \le\ &\exp\left(-n^{1-\frac{l}{ m}}\right)\cdot  \textbf{E}\exp(B^l_n) \vphantom{\left(\frac{l}{ m}\right)^\textnormal{c}}\nonumber\\
					\le\ &\exp\left(-n^{1-\frac{l}{ m}}\right)\cdot\left(1 + C_1 (e-1)\left(n(\ln n)^2\right)^{-\frac{l}{ m}}\right)^n \vphantom{\left(\frac{l}{ m}\right)^\textnormal{c}}\nonumber\\
					\le\ &\exp\left(C_1 (e-1) (\ln n)^{-\frac{2l}{ m}}n^{1-\frac{l}{ m}}-n^{1-\frac{l}{ m}}\right)\vphantom{\left(\frac{l}{ m}\right)^\textnormal{c}}\nonumber\\
					=\ &O\left(n^{-2}\right), \label{PL4.2} \vphantom{\left(\frac{l}{ m}\right)^\textnormal{c}}
\end{align} 
where we use $1+x\le \exp(x)$ for $x\ge 0$ to obtain the second last line. 
\end{proof}
Further, we define for $0<a<1$
\begin{align*}	
			E_n(a):=\ \Bigg\{ &\nexists\ n\le k< n +(\ln n)^2\ : \\
			&\max_{n\le l < k}(V(k) - V(l)) > \frac{a}\kappa \ln n,\ \max_{k< i<j\le k +(\ln n)^2} (V(j) - V(i)) > \frac{1-\frac{3a}4}\kappa \ln n\ \Bigg\}
\end{align*}
as the set of environments which do not have two ``large" increases of the potential in a ``small" interval after n.
\begin{lem}\label{PL6}	
	For all $0<a<1$ we have 
	\begin{align*}
		\textbf{P}\left(E_n(a)^{\textnormal{c}} \right)\ =\ O\left(n^{-\left(1+ \frac{a}5\right)}\right).
	\end{align*}	
\end{lem}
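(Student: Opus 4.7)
The plan is to fix each possible value of $k$, use independence of the two ``large increase'' events (which are separated by position $k$), estimate each factor with the tail bound \eqref{PL4.02}, and finally take a union bound over the $O((\ln n)^2)$ relevant values of $k$.

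Concretely, I would write $E_n(a)^{\textnormal{c}} \subseteq \bigcup_k (A_k\cap B_k)$, where $k$ ranges over the integers in $[n, n+(\ln n)^2)$, and
$$A_k\ :=\ \Big\{\max_{n\le l< k}\big(V(k)-V(l)\big) > \tfrac{a}{\kappa}\ln n\Big\},\quad B_k\ :=\ \Big\{\max_{k< i<j\le k+(\ln n)^2}\big(V(j)-V(i)\big) > \tfrac{1-3a/4}{\kappa}\ln n\Big\}.$$
Since $V(k)-V(l)=\sum_{r=l}^{k-1}\ln\rho_r$, the event $A_k$ is measurable with respect to $\sigma(\omega_n,\dots,\omega_{k-1})$. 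Similarly $V(j)-V(i)=\sum_{r=i}^{j-1}\ln\rho_r$ with $i\ge k+1$, so $B_k$ is measurable with respect to $\sigma(\omega_{k+1},\omega_{k+2},\dots)$. Hence $A_k$ and $B_k$ are independent under $\textbf{P}$; the strict inequalities in the definition of $E_n(a)$ ensure that $\omega_k$ lies in neither $\sigma$-algebra, providing the ``buffer'' between them. This independence is the one step that is not purely computational, and I regard it as the main obstacle.

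Each factor is then handled via stochastic domination by $S=\max_{m\ge 0}V(m)$. By an i.i.d.\ time-reversal, $\max_{n\le l<k}(V(k)-V(l))$ has the same distribution as $\max_{1\le m\le k-n}V(m)$, which is dominated by $S$; and by translation invariance the same is true for $\max_{k<i<j\le k+(\ln n)^2}(V(j)-V(i))$. Applying \eqref{PL4.02} to both yields
$$\textbf{P}(A_k)\ \le\ C_1\, n^{-a},\qquad \textbf{P}(B_k)\ \le\ C_1\, n^{-(1-3a/4)}.$$

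Combining these with independence and summing over the at most $(\ln n)^2+1$ values of $k$ gives
$$\textbf{P}(E_n(a)^{\textnormal{c}})\ \le\ \big((\ln n)^2+1\big)\cdot C_1^2\cdot n^{-(1+a/4)}\ =\ O\bigl(n^{-(1+a/5)}\bigr),$$
where the slack $a/4-a/5=a/20>0$ in the exponent absorbs the logarithmic prefactor. Once the disjoint support of the two increments (and hence independence) is observed, the rest reduces to two applications of \eqref{PL4.02} and this union bound.
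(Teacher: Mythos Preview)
Your approach is essentially the same as the paper's: a union bound combined with independence of the two increase events and the tail estimate \eqref{PL4.02}. Your treatment of $A_k$ via time reversal is correct and gives $\textbf{P}(A_k)\le C_1 n^{-a}$.

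There is, however, a genuine gap in your bound on $\textbf{P}(B_k)$. You assert that ``by translation invariance'' the double maximum $\max_{k<i<j\le k+(\ln n)^2}(V(j)-V(i))$ is stochastically dominated by $S=\max_{m\ge 0}V(m)$. This is false: the maximal \emph{upcrossing} of a random walk can exceed its overall maximum. For instance, if the increments are $-1,+1$ then $V(0)=0$, $V(1)=-1$, $V(2)=0$, so $S=0$ while $V(2)-V(1)=1>S$. Translation invariance only lets you shift the starting index; it does not reduce a two-index maximum to a one-index one.

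The fix is exactly what the paper does (implicitly): add a further union bound over the starting point $i\in(k,k+(\ln n)^2)$ of the second increase. For fixed $i$, $\max_{j>i}(V(j)-V(i))$ \emph{is} distributed as $S$, so
\[
\textbf{P}(B_k)\ \le\ (\ln n)^2\,\textbf{P}\Big(S\ge \tfrac{1-3a/4}{\kappa}\ln n\Big)\ \le\ (\ln n)^2\, C_1\, n^{-(1-3a/4)} .
\]
Independence of $A_k$ and $B_k$ is unaffected (still disjoint blocks of $\omega$'s). Summing over $k$ then yields the paper's bound $(\ln n)^4\, C_1^2\, n^{-(1+a/4)}=O(n^{-(1+a/5)})$. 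Your slack $a/4-a/5$ still absorbs the logarithmic prefactor, so the conclusion stands once this extra factor is inserted.
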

\begin{proof}
	For environments $\omega \in E_n(a)^{\textnormal{c}}$ we have two ``large" increases of the potential in the interval $[n, n+2(\ln n)^2]$. The first increase is bigger than $\frac{a}\kappa \ln n$ and the second bigger than $\frac{1-\frac{3a}4}\kappa \ln n$. We therefore have 
 \begin{align*}
 		\textbf{P}\left(E_n(a)^{\textnormal{c}} \right)\ \le\  (\ln n)^4 \textbf{P}\left(S\ge \frac{a}\kappa \ln n\right) \textbf{P}\left(S\ge \frac{1-\frac{3a}4}\kappa \ln n\right) \ \le\ (\ln n)^4 C_1^2 n^{-(1+\frac{a}4)}, 
 \end{align*}
where we use \eqref{PL4.02} to obtain the last line.
\end{proof}
Now, we want to use Lemma \ref{PL6} to show that for n large enough in the interval $[0,n]$ we do not find two ``big" increases of the potential in an interval of size $2 (\ln n)^2$. We define
\begin{align*}
	E(n,a):=\ \Bigg\{ &\nexists\ 0\le k\le n\ :\\
	&\max_{k - (\ln n)^2 \le l < k}(V(k) - V(l)) > \frac{a}\kappa \ln n,\ \max_{k< i<j\le k +(\ln n)^2} (V(j) - V(i))> \frac{1-\frac{3a}4}\kappa \ln n\Bigg\}.
\end{align*}
\begin{lem}\label{PL7}	
	For all $0<a<1$ and $\textbf{P}$-almost every environment $\omega$ there exists $N(\omega)$, such that we have $\omega \in E(n,a)$ for all $n\ge N(\omega)$.
\end{lem}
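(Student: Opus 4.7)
Fix $0<a<1$. The plan is to apply Borel--Cantelli on the geometric subsequence $n_j := 2^j$ to a slightly more flexible version of Lemma \ref{PL6} in which the two threshold exponents are allowed to be independent parameters, and then to interpolate. For $c_1,c_2>0$, let $E(n,c_1,c_2)$ denote the event obtained from the definition of $E(n,a)$ by replacing the threshold exponents $\frac{a}{\kappa}$ and $\frac{1-3a/4}{\kappa}$ with $\frac{c_1}{\kappa}$ and $\frac{c_2}{\kappa}$, so that $E(n,a) = E(n,a,1-3a/4)$. Repeating the argument from the proof of Lemma \ref{PL6} for each pivot $k\in[0,n]$ --- a union bound over the starting position of the second increase, the tail estimate \eqref{PL4.02}, and independence of $V$ on the two disjoint intervals flanking $k$ --- and then union-bounding over $k$ yields
\[ \textbf{P}\bigl(E(n,c_1,c_2)^{\textnormal{c}}\bigr)\;\le\;(n+1)(\ln n)^2 C_1^2\, n^{-(c_1+c_2)}\;=\;O\bigl(n^{-(c_1+c_2-1)+o(1)}\bigr), \]
which is summable along $n_j=2^j$ whenever $c_1+c_2>1$.

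Set $\epsilon:=a/16$ and $c_1:=a-\epsilon$, $c_2:=1-3a/4-\epsilon$. Then $c_1,c_2\in(0,1)$ and $c_1+c_2=1+a/8>1$. By Borel--Cantelli, for $\textbf{P}$-almost every $\omega$ there exists $J(\omega)$ with $\omega\in E(n_j,c_1,c_2)$ for all $j\ge J(\omega)$. To interpolate, take $n\ge n_J$ and let $j$ satisfy $n_j\le n\le n_{j+1}$. Suppose for contradiction that $\omega\in E(n,a)^{\textnormal{c}}$ with a witness $k\in[0,n]$. Since $\ln n\ge\tfrac{j}{j+1}\ln n_{j+1}$, an explicit check using $\epsilon=a/16$ shows that for all $j$ larger than some $j_0(a)$,
\[ \tfrac{a}{\kappa}\ln n\;\ge\;\tfrac{c_1}{\kappa}\ln n_{j+1}\qquad\text{and}\qquad\tfrac{1-3a/4}{\kappa}\ln n\;\ge\;\tfrac{c_2}{\kappa}\ln n_{j+1}. \]
Together with $(\ln n)^2\le(\ln n_{j+1})^2$ for the enclosing windows and $k\in[0,n_{j+1}]$ for the pivot, this shows that the same witness places $\omega$ in $E(n_{j+1},c_1,c_2)^{\textnormal{c}}$, contradicting the Borel--Cantelli conclusion. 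Hence $\omega\in E(n,a)$ for all $n\ge n_J$.

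The main obstacle is that naive monotonicity in the single parameter $a$ fails: the exponents $\frac{a}{\kappa}$ and $\frac{1-3a/4}{\kappa}$ move in opposite directions as $a$ varies, so no single $a'\in(0,1)$ makes $E(n_{j+1},a')\subseteq E(n,a)$ when $n<n_{j+1}$. Decoupling the two threshold exponents into independent parameters $c_1,c_2$ subject only to the sum condition $c_1+c_2>1$ is what creates enough slack to absorb the logarithmic ratio $\ln n/\ln n_{j+1}<1$ between the target scale and the subsequence scale.
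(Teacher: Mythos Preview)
Your proof is correct and takes a genuinely different route from the paper's. The paper applies Borel--Cantelli directly to the \emph{local} events $E_i(a)$ from Lemma~\ref{PL6} (summable since the exponent there is $1+a/5>1$), obtaining a random $N_0(\omega)$ such that $\omega\in E_i(a)$ for all $i>N_0$; it then handles pivots $k\in[0,N_0]$ by a finite-maximum argument (the quantity $M(\omega)$) and pivots $k>N_0$ by invoking the local events. You instead bound the \emph{global} event $E(n,c_1,c_2)^{\textnormal{c}}$ directly via a union bound over the pivot $k\in[0,n]$ (which costs a factor $n$), then recover summability by choosing $c_1+c_2>1$ and working along the geometric subsequence $n_j=2^j$, and finally interpolate to all $n$ using monotonicity of the windows and the slack $\epsilon$ in the thresholds. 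Your decoupling of the two exponents into independent parameters $(c_1,c_2)$ is the essential device that makes the interpolation work; the paper's argument avoids this by never comparing $E(n,a)$ to $E(m,a)$ for different $m,n$. Your approach is arguably more transparent and self-contained (it bypasses the somewhat delicate matching between the local events $E_i(a)$, whose windows and thresholds are at scale $\ln i$, and the global event $E(n,a)$ at scale $\ln n$), while the paper's approach reuses Lemma~\ref{PL6} more directly. One cosmetic point: your final ``$n\ge n_J$'' should really be $n\ge n_{\max(J,\,j_0(a))}$, since you need both $j+1\ge J$ and $j\ge j_0(a)$; this does not affect the argument.
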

\begin{proof}
Let
\begin{align*}
	N_0(\omega):=\ & \min\left\{ j \ge 0\ :\ \omega\in E_i(a)\ \ \forall\ i>j\right\}
\end{align*}
and let
\begin{align*}
	M(\omega):=\ \max\Bigg\{&\max_{k< r<s\le k +(\ln N_0(\omega))^2} (V(s) - V(r))\ : \\
	& k\in [0,N_0(\omega)] \text{ with}\max_{k - (\ln N_0(\omega))^2<l < k}(V(k) - V(l)) > \frac{a}\kappa \ln N_0(\omega)\Bigg\}
\end{align*}
be the maximal increase of the potential in an interval of size $(\ln N_0(\omega))^2$ after an increase of more than $\frac{a}\kappa \ln N_0(\omega)$ in the interval $[0,N_0(\omega)]$.\vspace{11pt}\\
Due to Lemma \ref{PL6} and the Borel-Cantelli lemma, we have that $N_0(\omega)$ is finite for $\textbf{P}$-almost every environment $\omega$. Now, we take $N(\omega)$ large enough such that $N(\omega)\ge N_0(\omega)$ and
\begin{align*}
\frac{1-\frac{3a}4}\kappa \ln N(\omega)\ \ge M(\omega).
\end{align*}
Then, for $n\ge N(\omega)$ let $K$ be the size of the maximal increase of the potential in an interval of size $(\ln n)^2$ after an increase of more than $\frac{a}\kappa \ln n$ in the interval $[0,n]$. Then, we have
$$K\ \le\ M(\omega)\ \le\ \frac{1-\frac{3a}4}\kappa \ln n$$ if the increase is in the interval $[0,N_0(\omega)]$ or if the increase is in the interval $(N_0(\omega),n]$ we have $$K\ \le\ \frac{1-\frac{3a}4}\kappa \ln n$$
by the definition of $N_0(\omega)$.
\end{proof}
Next, we proof two technical statements which will be useful for calculations in the next Section. First, we define
$$C^-(\omega):= \sum_{j= -\infty}^{-1} \exp(-V(j)),\ \ \ C^+(\omega):= \sum_{j = 0}^{\infty} \exp(V(j))  \text{ and }  D^-(\omega):= \sum_{j = -\infty}^{-1} \exp(-V(j))\left(W_{j} + W_{j}^2\right).$$
\begin{lem} \label{PL0.1}
We have for $\textbf{P}$-almost every environment $\omega$
$$C^-(\omega) + C^+(\omega) + D^-(\omega)\ <\ \infty.$$
\end{lem}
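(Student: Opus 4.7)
The plan is to verify finiteness of each of the three sums separately. The first two are immediate from the strong law of large numbers (SLLN), and $D^-$ additionally requires a Borel--Cantelli argument driven by a small positive moment of $W_0$.

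Set $\alpha := -\textbf{E}\ln\rho_0 > 0$ (Assumption~1). Applying SLLN to the i.i.d.\ sequence $(\ln\rho_i)_{i\ge 0}$ gives $V(j)/j \to -\alpha$ for $\textbf{P}$-a.e.\ $\omega$, so there exists $J(\omega)$ with $V(j) \le -\alpha j/2$ for $j \ge J(\omega)$; hence $C^+(\omega)$ is a finite sum plus a geometric tail. Similarly, $V(-m) = \sum_{k=1}^{m}(-\ln\rho_{-k})$ is a random walk with positive drift $\alpha$, so $V(-m)/m \to \alpha$ and $\exp(-V(-m)) \le e^{-\alpha m/2}$ for $m \ge M_1(\omega)$, making $C^-(\omega)$ finite $\textbf{P}$-a.s.

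For $D^-$ the idea is to dominate $W_{-m}$ $\textbf{P}$-a.s.\ by a polynomial in $m$ and combine with the exponential decay of $\exp(-V(-m))$. The key input is that $\textbf{E}[W_0^s]<\infty$ for some $s\in(0,1]$. Define $\phi(s) := \textbf{E}[\rho_0^s]$; since $\phi(0)=1$ and $\phi'(0)=\textbf{E}\ln\rho_0=-\alpha<0$, there is $s_0>0$ with $\phi(s)<1$ on $(0,s_0)$. Fix $s \in (0, \min(1,s_0))$. Using the representation $W_0 = \sum_{j\le 0} e^{V(1)-V(j)}$ together with subadditivity $(\sum_n a_n)^s \le \sum_n a_n^s$, valid for $s \in (0,1]$ and $a_n \ge 0$, followed by Tonelli and independence,
\begin{align*}
\textbf{E}[W_0^s] \;\le\; \sum_{j\le 0} \textbf{E}\!\left[e^{s(V(1)-V(j))}\right] \;=\; \sum_{j\le 0} \phi(s)^{1-j} \;=\; \frac{\phi(s)}{1-\phi(s)} \;<\; \infty.
\end{align*}
By stationarity of the environment, $W_{-m}$ has the same distribution as $W_0$ for every $m$, so Markov's inequality gives $\textbf{P}(W_{-m} > m^{2/s}) \le \textbf{E}[W_0^s]\, m^{-2}$, which is summable. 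Borel--Cantelli therefore supplies $M_2(\omega)$ such that $W_{-m} \le m^{2/s}$ for all $m \ge M_2(\omega)$. Combining, for $m \ge \max(M_1(\omega), M_2(\omega))$,
\begin{align*}
\exp(-V(-m))\,(W_{-m} + W_{-m}^2) \;\le\; e^{-\alpha m/2}\,(m^{2/s}+m^{4/s}),
\end{align*}
which is summable in $m$, so $D^-(\omega) < \infty$ $\textbf{P}$-a.s.

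The main obstacle is $D^-$ in the subballistic regime $\kappa \le 1$, where $\textbf{E} W_0 = \infty$ and one cannot control $\textbf{E}[D^-]$ directly. The subadditivity step is the device that bypasses this: it produces a finite moment of $W_0$ of any sufficiently small positive order, which is enough because it only has to dominate a polynomial factor competing against the exponential decay of $\exp(-V(-m))$.
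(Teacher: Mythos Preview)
Your proof is correct and reaches the conclusion by a genuinely different route from the paper. For $C^\pm$ the two arguments coincide (SLLN on $V(j)/j$). For $D^-$, the paper does \emph{not} use any moment of $W_0$; instead it invokes the potential-structure lemmas already proved (its sets $B_1(n)$ and $B_3(n)$, controlling block lengths and maximal increases of $V$) to obtain a pathwise bound $W_j \le (-j)^{2+1/\kappa}$ for $j$ sufficiently negative, and then pairs this polynomial bound with the exponential decay of $e^{-V(j)}$. Your argument replaces this environment-geometry input by a purely distributional one: the fractional-moment estimate $\textbf{E}[W_0^s]<\infty$ for $s\in(0,\min(1,\kappa))$ via subadditivity and $\phi(s)<1$, then Markov plus Borel--Cantelli to get $W_{-m}\le m^{2/s}$ eventually. (Incidentally, you do not need the differentiability of $\phi$ at $0$: since $\phi(0)=\phi(\kappa)=1$ and $\phi$ is a non-constant moment generating function, convexity alone gives $\phi(s)<1$ on $(0,\kappa)$.)

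Both arguments are valid. Yours is more self-contained and elementary, independent of the block decomposition that the paper develops for other purposes; the paper's version has the advantage of reusing machinery already in place and yielding an explicit exponent tied to $\kappa$.
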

Before we prove the lemma, we note that due to \eqref{Expectation} we have
\begin{align*}
	&E_\omega T_n - E_{\omega^n} T_n\ =\ 2\sum_{j=0}^{n-1}\sum_{i=-\infty}^0 \exp\big(V(j+1) - V(i)\big)\displaybreak[0]\\
	  =\ &2\sum_{j=0}^{n-1} \exp\big(V(j+1)\big)\sum_{i=-\infty}^0 \exp\big(- V(i)\big) \ \le\ 2 \Big(C^-(\omega)+1\Big) C^+(\omega),  
\end{align*}
which, using Lemma \ref{PL0.1}, yields 
\begin{align}
\lim_{n\to\infty} \frac{E_\omega T_n}{E_{\omega^n} T_n}\ =\ 1\label{PL0.1.0.01}
\end{align}
for $\textbf{P}$-almost every environment $\omega$. 
\begin{proof}[Proof of Lemma \ref{PL0.1}]
First, we note that for $\textbf{P}$-almost every environment $\omega$ by the SLLN  there existis $N_1(\omega)$ large enough such that \begin{align}
	V(j)\ &\ge\ \frac{-\textbf{E} \ln\rho_0}2 j\ \text{ for all }\ j \le -N_1(\omega), \label{PL0.1.01}\\
	V(j)\ &\le\ \frac{\textbf{E} \ln\rho_0}2 j\ \text{ for all }\ j \ge N_1(\omega). \nonumber
\end{align}
Due to Assumption 1 ($\textbf{E} \ln\rho_0 < 0$), we therefore get that for $\textbf{P}$-almost every environment $\omega$ we have \begin{align}
	C^-(\omega) + C^+(\omega) \ <\ \infty. \label{PL0.1.02}
\end{align}
Next, we note that Lemma \ref{PL10} and \ref{PL3} together with the Borel-Cantelli lemma yield $$N_2(\omega) := \inf\{k\in\N\ :\ \omega\in B_1(n) \cap B_3(n) \ \forall\ n\ge k\}\ <\ \infty\ \ \textbf{P}\text{-a.s.}$$
We get for $j \le -N_2(\omega)$ 
\begin{align}
	W_j\ &=\ \sum_{i=2j+1}^{j} \exp\big(V(j+1) - V(i)\big) + \sum_{i=-\infty}^{2j} \exp\big(V(j+1) - V(i)\big) \nonumber\\
		&\le\ (-j)  \left(-2 j\right)^{\frac1\kappa}  \left(\ln (-2j)\right)^{\frac2\kappa} + \sum_{i=-\infty}^{2j} \frac1{i^2} \ \le\ (-j)^{2+ \frac1\kappa}, \label{PL0.1.1} 
\end{align}
where we used for the second last inequality that for $\omega \in B_3(-2j)$ the biggest increase of the potential in the interval $[2j,j]$ is smaller than $\frac1\kappa \Big(\ln (-2j) + 2 \ln\ln (-2j)\Big)$.  Further, we used that since $(-i) - (-j) \ge 2(\ln (-i))^2$ for all $i\le 2j$, we have  $V(j) - V(i-1) < - 2 \ln i $ for $i\le 2j$ and $\omega \in B_1(-i)$. \\[11pt]
Next, we notice that for $-N_2(\omega) <j\le 0$ we get
\begin{align}
	W_j\ &= \ \sum_{i=((-N_2(\omega))\wedge 2j) +1}^{j} \exp\big(V(j+1) - V(i)\big)+  \sum_{i=-\infty}^{(-N_2(\omega))\wedge 2j } \exp\big(V(j+1) - V(i)\big) \ \le\  N_2(\omega)^{2+ \frac1\kappa},\label{PL0.1.2} 
\end{align}
where we this time used that by the definition of $N_2(\omega)$ we have $\omega\in B_3(N_2(\omega))$ and $\omega\in B_1(-i)$ for all $i\le (-N_2(\omega))\wedge 2j$. \vspace{11pt}\\
\begin{figure}[ht]
\includegraphics[viewport=-10 480 380 745, scale =0.75]{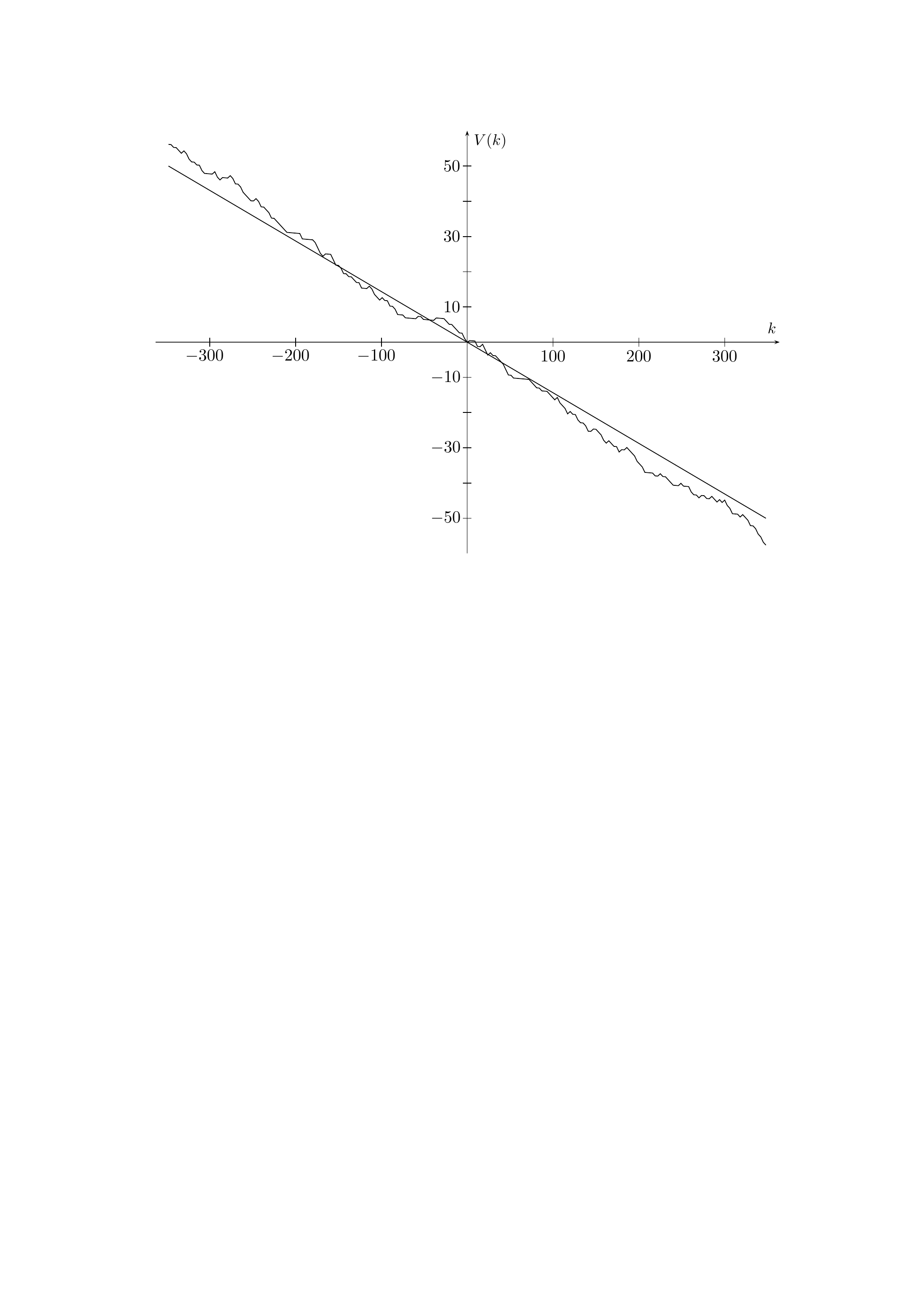}
   \caption{Simulation of the potential of an environment distribution with $\kappa = 2.39$ and $\textbf{E} \ln\rho_0  =  -0.14$. }\label{potentialkappagross}
\end{figure}

Using \eqref{PL0.1.1} and \eqref{PL0.1.2}, we get
\begin{align}
 D^-(\omega)\ \le\ &2\sum_{j = -\infty}^{-1} \exp\big(-V(j+1)\big) W_{j}^2 + 2C^-(\omega)\displaybreak[0]\nonumber \\
	\le\ &2\sum_{j = -\infty}^{-N_2(\omega)} \exp\big(-V(j+1)\big) W_{j}^2 + 2\sum_{j = -N_2(\omega)+1}^{-1} \exp\big(-V(j+1)\big) W_{j}^2+ 2C^-(\omega)\nonumber\\
	\le\ &2\sum_{j = -\infty}^{-N_2(\omega)} \exp\big(-V(j+1)\big)(-j)^{4+ \frac2\kappa} + 2C^-(\omega)\left( N_2(\omega)^{4+ \frac2\kappa}+1\right)\label{PL0.1.3}.
\end{align}
Due to \eqref{PL0.1.01}, we have that the sum in \eqref{PL0.1.3} converges $\textbf{P}$-a.s. and we conclude
$$D^-(\omega) < \infty$$ for $\textbf{P}$-almost every environment $\omega$.
\end{proof}
We define 
\begin{align*}
	C:=\ \left\{\omega\ :\ C^-(\omega) + C^+(\omega) + D^-(\omega)\  <\ \infty\right\}
\end{align*}
and due to Lemma \ref{PL0.1} we have $\textbf{P}(C) =1$. \vspace{11pt}\\
\begin{figure}[ht]
\includegraphics[viewport=-10 480 380 745, scale =0.75]{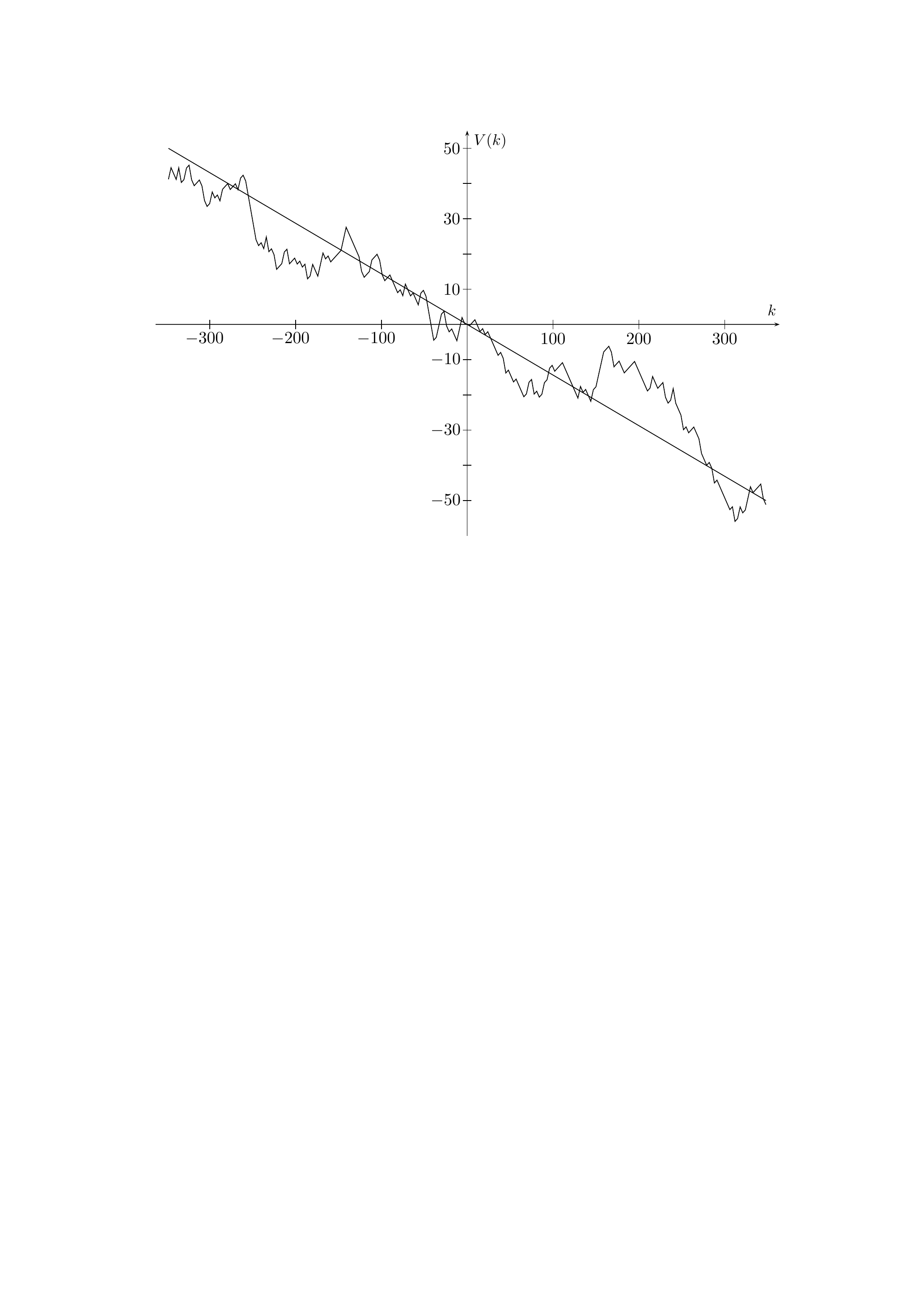}
  \caption{Simulation of the potential of an environment distribution with $\kappa = 0.19$ and $\textbf{E} \ln\rho_0  =  -0.14$.}\label{potentialkappaklein}
\end{figure}
As the last step in this section, we combine all the results of the previous lemmata and define what we will call a ``good" and ``typical" environment:
\begin{lem}\label{PL5}
	For all $m\in\N$, $0<a<1$ and for $\textbf{P}$-almost every environment $\omega$, there exists $N(\omega)$ such that 
	\begin{align*}
			&\omega\in B(n)\cap D(n,m)\cap E(n,a)  \hspace{15pt} \forall\ n \ge N(\omega) ,
	\end{align*}
	where $B(n):=\ B_1(n) \cap B_2(n) \cap B_3(n) \cap B_4(n) \cap C$.

\end{lem}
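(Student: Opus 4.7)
The plan is to assemble the claim from the six preceding statements via the first Borel--Cantelli lemma, with no new analytic input required. For three of the ingredients, the work has already been done in the form of summable tail bounds: Lemmas \ref{PL10}, \ref{PL11}, and \ref{PL4} give $\textbf{P}(B_1(n)^{\mathrm{c}}) = O(n^{-2})$, $\textbf{P}(B_2(n)^{\mathrm{c}}) = O(n^{-2})$, and $\textbf{P}(D(n,m)^{\mathrm{c}}) = O(n^{-2})$ respectively. Since $\sum_{n\ge 1} n^{-2} < \infty$, Borel--Cantelli yields for $\textbf{P}$-almost every $\omega$ a finite $N_1(\omega) = N_1(\omega, m)$ so that $\omega \in B_1(n) \cap B_2(n) \cap D(n,m)$ for every $n \ge N_1(\omega)$.

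For the two remaining $n$-dependent families, the pathwise statements are already in hand. Lemma \ref{PL3} supplies a finite $N_2(\omega)$ with $\omega \in B_3(n) \cap B_4(n)$ for all $n \ge N_2(\omega)$, and Lemma \ref{PL7}, applied with the fixed parameter $a\in(0,1)$, supplies a finite $N_3(\omega) = N_3(\omega,a)$ with $\omega \in E(n,a)$ for all $n \ge N_3(\omega)$. The set $C$ does not depend on $n$, and Lemma \ref{PL0.1} gives $\textbf{P}(C) = 1$ outright.

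Intersecting the four full-measure events just produced (the Borel--Cantelli event for $B_1 \cap B_2 \cap D(\cdot,m)$, the event from Lemma \ref{PL3}, the event from Lemma \ref{PL7} with the fixed $a$, and the event $C$) gives a single event of $\textbf{P}$-measure one on which we may set $N(\omega) := \max\{N_1(\omega), N_2(\omega), N_3(\omega)\}$; for every $n \ge N(\omega)$ the environment then lies in $B(n) \cap D(n,m) \cap E(n,a)$, as required. The only small care needed is that $m$ and $a$ are treated as fixed parameters throughout (the quantifier in the statement is ``for all $m$ and $a$, $\textbf{P}$-a.s.\ \ldots'', so the exceptional null set may depend on $m$ and $a$), and that the tail exponent in Lemma \ref{PL6} exceeds $1$, which is what makes Lemma \ref{PL7} available here. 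There is no genuine obstacle; the lemma is essentially a bookkeeping synthesis of the previous ones.
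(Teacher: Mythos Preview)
Your proof is correct and follows exactly the paper's approach: the paper's proof is the single sentence ``The statement follows directly from the Borel--Cantelli lemma together with Lemma \ref{PL10} -- \ref{PL0.1},'' and you have simply spelled this out in detail, grouping the summable-tail ingredients (Lemmas \ref{PL10}, \ref{PL11}, \ref{PL4}) for a Borel--Cantelli step and invoking the already-pathwise Lemmas \ref{PL3}, \ref{PL7}, \ref{PL0.1} directly.
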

\begin{proof}
	The statement follows directly from the Borel-Cantelli lemma together with Lemma \ref{PL10} - \ref{PL0.1}.
\end{proof}
For these ``good" and ``typical" environments, we have a lower and an upper bound on the height of the highest block in the interval $[0,n]$ and we know that the potential does not stay at a certain level for a long time. In addition to that, we can control the number of high blocks and do not have two large increases in a small interval. For the influence of $\kappa$ on the shape of the potential compare the following two simulations (Figure \ref{potentialkappagross} and Figure \ref{potentialkappaklein}). We see that the potential in both cases follows the line with slope $-\textbf{E} \ln\rho_0  =  -0.14$ but the fluctuations are larger when $\kappa$ is smaller. 
\section{Hitting times: quenched expectation and quenched variance}\label{variance}
We will see that the distance in total variation of the distribution of the lazy RWRE with respect to $P_{\omega^n}$ to its stationary distribution can be bounded using the tails of $T_n$ (cf.~Section \ref{cutoff}). To show a cutoff, we need to control the fluctuations of $T_n$ very precisely. The main result of this section is Theorem \ref{BT1}. For $\kappa<1$ and for  \textbf{P}-almost every environment, we show that the quenched expectation of $T_n$ is of order $n^{\frac1\kappa}$. Further, we prove that for $\kappa <2$ the quenched variance of $T_n$ is of order $n^{\frac2\kappa}$. We point out that the annealed expectation and variance of $T_n$ are infinite for these $\kappa$. Furthermore, we analyse the quenched variance of the crossing time of ``deep" blocks.
\vspace{11pt}\\
A formula for the quenched variance of $T_{k+1}- T_k$ as a function of the environment is given in equation (2.1) in \cite{G}. In our notation, we get for $k\in\Z$ (cf.~\eqref{W} for the definition of $W_i$)
\begin{align}
	\textnormal{Var}_{\omega}\big(T_{k+1}- T_k\big)\ =\ & 4\left(W_k + W_k^2\right) + 8\sum_{i=-\infty}^{k-1}\exp\big(V(k+1)-V(i+1)\big)\cdot\left(W_i + W_i^2\right). \label{B0.1}
\end{align}
This yields for $n\in\N$
\begin{align}
	\textnormal{Var}_{\omega}\big(T_n\big)\ =\ & 4\cdot\sum_{j=0}^{n-1} \left(W_j + W_j^2\right) + 8\cdot\sum_{j=0}^{n-1}\sum_{i=-\infty}^{j-1}\exp\big(V(j+1)-V(i+1)\big)\cdot\left(W_i + W_i^2\right). \label{B1}
\end{align}
By equations \eqref{B0.1} and \eqref{B1} we see that the quenched variance of $T_n$ is not -- as the expectation -- a function of the sequence $(W_j)_{j\in\N_0}$. But we will see that the  $W_j$-terms in \eqref{B1} still determine the order of $\textnormal{Var}_{\omega}(T_n)$. First, we give an upper bound for $W_j$ depending on the biggest increase of the potential in a small neighbourhood to the left of $j$.
\begin{lem}\label{BL1}
	For $\omega\in B_1(n)\cap C$ and n large enough such that $C^-(\omega) < \ln n$ we have for $j=1,...,n$  (cf.~\eqref{j_0} for the definition of $j_0$)
	\begin{align*}
		W^0_j\ <\ W_j\ \le\  (\ln n)^2  \Big(1 + 2\exp\big(V(j+1) - V(\nu_{j_0})\big)\Big). 
	\end{align*}
\end{lem}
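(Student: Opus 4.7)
The strict lower bound $W_j^0<W_j$ is immediate: $W_j$ contains all the summands of $W_j^0$ plus the strictly positive contribution $\exp(V(j+1))(1+C^-(\omega))$ from the indices $k\le 0$. For the upper bound, my plan is to factor
\[
W_j\;=\;\exp\bigl(V(j+1)-V(\nu_{j_0})\bigr)\cdot \sum_{k=-\infty}^{j}\exp\bigl(V(\nu_{j_0})-V(k)\bigr)
\]
and argue that the inner sum is bounded by a small multiple of $(\ln n)^2$. This factorization is compatible with the negative-index terms because $V(\nu_{j_0})\le V(\nu_0)=0$, the ladder sequence $(V(\nu_i))_{i\ge 0}$ being strictly decreasing by construction.

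I would split the inner sum into three ranges. For $k\le 0$ the sum equals $\exp(V(\nu_{j_0}))(1+C^-(\omega))$, which is at most $1+\ln n$ since $\exp(V(\nu_{j_0}))\le 1$ and $C^-(\omega)<\ln n$ by hypothesis (using $\omega\in C$). For $\nu_{j_0}\le k\le j$ each summand is at most $1$ because $V(k)\ge V(\nu_{j_0})$ throughout the block $[\nu_{j_0},\nu_{j_0+1})$; moreover, $B_1(n)$ forbids $j-\nu_{j_0}\ge (\ln n)^2$ (otherwise the potential would be forced to drop by at least $\ln n$, contradicting $V(j)\ge V(\nu_{j_0})$), so this range contributes at most $(\ln n)^2$.

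The main step, and where the main obstacle lies, is to bound $\sum_{k=1}^{\nu_{j_0}-1}\exp(V(\nu_{j_0})-V(k))$. I would decompose $[1,\nu_{j_0})$ along the blocks $[\nu_m,\nu_{m+1})$ with $0\le m<j_0$ and use $V(k)\ge V(\nu_m)$ on each block to bound its contribution by $(\nu_{m+1}-\nu_m)\exp(V(\nu_{j_0})-V(\nu_m))$. The subtle point is that estimating each block width individually by $(\ln n)^2$ and then summing the exponentials would produce an extra factor of $(\ln n)^2$, inflating the bound to $(\ln n)^4$. To avoid this, I would group blocks whose left endpoint $\nu_m$ lies in the window $[\nu_{j_0}-(l+1)(\ln n)^2,\nu_{j_0}-l(\ln n)^2)$ for $l=0,1,2,\ldots$: within such a window $B_1(n)$ forces $V(\nu_m)-V(\nu_{j_0})\ge l\ln n$, so $\exp(V(\nu_{j_0})-V(\nu_m))\le n^{-l}$ uniformly on the window, while the disjointness of consecutive blocks combined with the block-width bound yields that the total width of blocks starting in the window is at most $2(\ln n)^2$ (the window length plus at most one straddling block of width $(\ln n)^2$). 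Summing the resulting geometric series $\sum_{l\ge 0}2(\ln n)^2\,n^{-l}=O((\ln n)^2)$ and recombining the three pieces gives the bound $W_j\le(\ln n)^2(1+2\exp(V(j+1)-V(\nu_{j_0})))$ after absorbing the modest multiplicative constants into the stated form.
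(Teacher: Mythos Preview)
Your argument is correct and rests on the same idea as the paper's: the $B_1(n)$ property forces geometric decay of the summands in windows of size $(\ln n)^2$. The paper organizes the computation slightly more directly, however. Rather than factoring out $\exp(V(j+1)-V(\nu_{j_0}))$ and then decomposing the remaining sum along blocks, it splits the original sum $\sum_{k\le j}\exp(V(j+1)-V(k))$ into a near part $k\in[j-\lceil(\ln n)^2\rceil+2,\,j]$, far windows indexed by $l\ge 1$, and the tail $k\le -1$, and applies $B_1(n)$ directly to the pair $(k,j+1)$ to get $\exp(V(j+1)-V(k))\le n^{-l}$ on the $l$-th far window. This makes your block decomposition in part~3 an unnecessary detour: one may bound each $\exp(V(\nu_{j_0})-V(k))$ (or $\exp(V(j+1)-V(k))$) directly from $B_1(n)$ without passing through $V(\nu_m)$ and then having to control the total block width per window.

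One minor point: your inner sum comes out to roughly $4(\ln n)^2$ rather than $2(\ln n)^2$, so ``absorbing the modest multiplicative constants into the stated form'' does not literally yield the inequality with the constant $2$ as written (for instance when $\exp(V(j+1)-V(\nu_{j_0}))$ is close to $1$). The paper achieves the sharper constant because its far windows contribute an \emph{absolute} $O((\ln n)^2/n)$, which is absorbed by the additive $(\ln n)^2$ in the stated bound, whereas in your factored form the corresponding contribution is already multiplied by $\exp(V(j+1)-V(\nu_{j_0}))$. This is of course irrelevant for every application of the lemma in the paper.
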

For $\nu_{i+1}<n$ and $n$  large enough, we note that Lemma \ref{BL1} yields for $\omega\in B_1(n)\cap C$ (cf.~\eqref{H} for the definition of $H_i$)
\begin{align}
	\max_{j\in [\nu_i,\nu_{i+1})}W_j\ \le\ (\ln n)^2  +2(\ln n)^2 \exp(H_i). \label{BL1.0.1}
\end{align}
Therefore, we get together with \eqref{Expectation} and using that for $\omega\in B_1(n)$ all blocks in the interval $[-n,n]$ are smaller than $(\ln n)^2$
$$E_\omega^{\nu_{i}} T_{\nu_{i+1}}\ \le\ 2 (\ln n)^4 + 2(\ln n)^4 \exp(H_i).$$
\begin{proof}[Proof of Lemma \ref{BL1}.]
We note that the first inequality of the statement follows directly by the definition. Further, for all $j \in \N_0$ we have by the definition of the ladder locations $(\nu_k)_{k\in\N_0}$ (cf.~\eqref{B3})
	\begin{align*}
		 \max_{0 \le k \le j } \Big(V(j+1) - V(k)\Big) \ =\ V(j+1) - V(\nu_{j_0}). 
	\end{align*}
For $\omega \in B_1(n)\cap C$ we therefore get for $j \ge \nu_1$ and $n$ large enough (cf.~Lemma \ref{PL0.1} for the definition of $C^-(\omega)$)
\begin{align*}
	W_j = &\sum_{k = -\infty}^{j} \exp\big(V(j+1) - V(k)\big) \\
	\le &\sum_{k=-\infty}^{-1} \exp\big(V(j+1) - V(k)\big) + \sum_{l=2}^{\left\lceil \frac{j}{\left\lceil(\ln n)^2\right\rceil}\right\rceil}\hspace{2.3pt}  \sum_{k=j- l \left\lceil(\ln n)^2\right\rceil+2}^{j- (l-1) \left\lceil(\ln n)^2\right\rceil+1}\exp\big(V(j+1) - V(k)\big) \\
	&+ \sum_{k=j- \left\lceil(\ln n)^2\right\rceil+2}^{j}\exp\big(V(j+1) - V(k)\big) \displaybreak[0] \\
			\le & \exp\big(V(j+1)\big)C^-(\omega) + \left\lceil(\ln n)^2\right\rceil\hspace{-5.1pt}\sum_{l=1}^{\left\lceil \frac{j}{\left\lceil(\ln n)^2\right\rceil}\right\rceil-1} \exp\big(-l \ln n\big)  +\hspace{-3pt}\sum_{k= j - \left\lceil(\ln n)^2\right\rceil+2}^{j} \exp\big(V(j+1) - V(k)\big) \\
			\le &2(\ln n)^2\exp\big(V(j+1)- V(\nu_{j_0})\big), \vphantom{\sum_i^k}
\end{align*}
where we used the definition of the set $B_1(n)$ for the second last inequality and the fact that $\exp\big(V(j+1)\big)< \exp\big(V(j+1)- V(\nu_{j_0})\big)$ for $j\ge\nu_1$ to obtain the last inequality. Further, for $j=1,...,\nu_1-1$ and $n$ large enough we have $$\max_{j\in\{1,...,\nu_1-1\}} W_j\ \le\ (\ln n)^2.$$
\end{proof}
\begin{proof}[Proof of Theorem \ref{BT1}]
We have for environments $\omega\in B(n)$ due to \eqref{Expectation} and Lemma \ref{BL1} 
\begin{align}
	E_\omega T_n\ =\ n + 2\sum_{i=0}^{n-1} W_i\ \le\ 3 n (\ln n)^2 + 4 (\ln n)^4 \sum_{l=0}^{n_0} \exp(H_l) \label{BT1.10}
\end{align}
because for $\omega \in B_1(n)$ every block in $[0,n]$ is smaller than $(\ln n)^2$. \vspace{11pt}\\
Next, we recall that for environments $\omega\in D(n,m)$ (cf. \eqref{D(n,m)} for the definition) we have for all  $i\in\{1,...,m-1\}$ at most $n^{1 - \frac{i}m}$ blocks with a height of more than $\frac{i}{\kappa m}(\ln n+ 2\ln\ln n)$. We therefore get for environments $\omega\in B(n)\cap~D(n,m)$
\begin{align}
	\sum_{l=0}^{n_0} \exp(H_l) =\ & \sum_{i=0}^{m-1}\sum_{l=0}^{n_0} \exp(H_l) \mathds{1}_{\left\{\frac{i}{\kappa m}(\ln n+ 2\ln\ln n)\le H_l< \frac{i+1}{\kappa m}(\ln n+ 2\ln\ln n)\right\}}\nonumber\\
	\le\ &\sum_{i=0}^{m-1} \exp\left(\frac{i+1}{\kappa m}(\ln n+ 2\ln\ln n)\right)n^{1-\frac{i}m} \nonumber\\
	\le\ &(\ln n)^{\frac2{\kappa}} \sum_{i=0}^{m-1}n^{\frac{(i+1)+ \kappa(m-i)}{\kappa m}},\label{BT1.9}
\end{align}
where we additionally use that for environments in $B(n)$ the highest block in $[0,n]$ is smaller than $\frac1{\kappa}(\ln n+2\ln\ln n)$ due to definition of $B_3(n)$.\vspace{11pt}\\
Let $\delta >0$ be arbitrary and m be large enough, such that 
\begin{align}
	\frac1m\ <\ \frac\delta2. \label{BT1.4.1}
\end{align}
Further, we note that the function $(i+1) + \kappa(m-i)$ is increasing in $i$ for $\kappa \le 1$ and decreasing for $\kappa >1$. Therefore \eqref{BT1.10} and \eqref{BT1.9} yield for environments  $\omega\in B(n)\cap~D(n,m)$ and n large enough 
\begin{align*}
	E_\omega T_n\ \le\ &3 n (\ln n)^2 + 4 (\ln n)^{4+ \frac2\kappa} \sum_{i=0}^{m-1} n^{\frac{(i+1)+ \kappa(m-i)}{\kappa m}} \\
	\le\ &\begin{cases}
						\displaystyle 3 n (\ln n)^2 + 4 m(\ln n)^{4+ \frac2\kappa} n^{\frac{m+ \kappa}{\kappa m}}\ \ \text{ if }\ \kappa\le 1 \vphantom{\frac12}\\
						\displaystyle 3 n (\ln n)^2 + 4 m(\ln n)^{4+ \frac2\kappa} n^{\frac{1+ \kappa m}{\kappa m}} \ \text{ if }\ \kappa> 1 \vphantom{\frac12}
				\end{cases} 
	\le\ \begin{cases}
						\displaystyle n^{\frac1\kappa + \delta} \ \text{ if }\ \kappa\le 1 \vphantom{\frac12}\\
						\displaystyle n^{1 + \delta} \ \ \text{ if }\ \kappa> 1. \vphantom{\frac12}
				\end{cases}
\end{align*}
Since $\delta>0$ was arbitrary, Lemma \ref{PL5} yields for $\textbf{P}$-almost every environment $\omega$
\begin{align}
	\limsup_{n\to\infty} \frac{\ln E_\omega (T_n)}{\ln n}\ \le\ \max\left\{\frac1\kappa , 1\right\}. \label{BT1.4.0.1}
\end{align}
To obtain the lower bound, we note that $T_n \ge n$ and therefore in the case $\kappa \ge 1$ there is nothing to show. For the case $\kappa <1$ we use that for environments $\omega\in B_4(n)$ we know that in the interval $[0,n]$ there exists a block with a height of more than $\frac1\kappa(\ln n - 4\ln\ln n)$ and we therefore have
\begin{align*}
	E_\omega T_n\ =\ n + 2\sum_{i=0}^{n-1} W_i\ \ge\ (\ln n)^{-\frac4\kappa} n^{\frac1\kappa}.
\end{align*}
Thus, Lemma \ref{PL5} together with \eqref{BT1.4.0.1} finishes the proof of part (\textit{a}). \vspace{11pt}\\
Next we analyse $\text{Var}_\omega T_n$. For $j= 0,..., n$ we define 
\begin{align*}
	H_j^r(n):=\ \max_{j\le k\le j + \lceil(\ln n)^2\rceil} (V(k+1) - V(j+1))
\end{align*}
as the biggest increase of the potential in a neighbourhood of size $(\ln n)^2$ to the right of position $j$. We get for the quenched variance of $T_n$ by changing the order of the summation in equation \eqref{B1} for environments $\omega\in B(n)$ and n large enough
	\begin{align}
		&\textnormal{Var}_{\omega}\left(T_n\right) \vphantom{\frac12}\\
		= &4\sum_{j=0}^{n-1} \left(W_j + W_j^2\right) + 8\sum_{j=0}^{n-1}\sum_{i=-\infty}^{-1}\exp\big(V(j+1) - V(i+1)\big) \left(W_i + W_i^2\right)\nonumber \\ 
		& + 8\sum_{i=0}^{n-2}\sum_{j=i+1}^{n-1}\exp\big(V(j+1) - V(i+1)\big) \left(W_i + W_i^2\right)\displaybreak[0]\nonumber\\
		= &4\sum_{j=0}^{n-1} \left(W_j + W_j^2\right)+ 8 D^-(\omega)\sum_{j=0}^{n-1}\exp(V(j+1))\nonumber\\
		 & + 8\sum_{i=0}^{n-2}\left(\sum_{j=i+1}^{i+\lceil(\ln n)^2\rceil-1}\exp\big(V(j+1) - V(i+1)\big)+\hspace{-6.1pt}\sum_{j=i+\lceil(\ln n)^2\rceil}^{n-1}\exp\big(V(j+1) - V(i+1)\big) \right)\left(W_i + W_i^2\right)\displaybreak[0]\nonumber\\
		\le &4\sum_{j=0}^{n-1} \left(W_j + W_j^2\right)+ 8 D^-(\omega)C^+(\omega)+ 8\sum_{i=0}^{n-2}\left((\ln n)^2\exp\big(H_i^r\big)  + (\ln n)^2 \sum_{k=1}^{\infty}n^{-k}\right)\left(W_i + W_i^2\right)\nonumber\\
		\le & 12(\ln n)^2\sum_{i=0}^{n-1} \left(1 +  \exp\big(H_i^r\big) \right)\left(W_i + W_i^2\right).	\label{BT1.1}
	\end{align}
We notice that for all $i=0,...,n$ we have
\begin{align*}
		(V(i+1) - V(\nu_{i_0})) + H_i^r(n)\ =\ \max_{i\le k < i + \lceil(\ln n)^2\rceil}(V(k+1) - V(\nu_{i_0}))\ 
		\le\  \max_{i_0 \le k\le i_0 + \lceil(\ln n)^2\rceil} H_{k}.  
\end{align*}
Therefore, Lemma \ref{BL1} yields for $\omega\in B(n)$ and $n$ large enough as an upper bound for \eqref{BT1.1} 
\begin{align}
	\textnormal{Var}_{\omega}\left(T_n\right)\	\le\ & 24(\ln n)^6\sum_{i=0}^{n-1}\left(1+\exp\big(H_i^r\big)\right)\Big(2 + 8\exp\big(2\left(V(i+1) - V(\nu_{i_0})\right)\big)\Big)\nonumber\displaybreak[0]\\
	\le\ & 192(\ln n)^6\sum_{i=0}^{n-1}\bigg(\exp\big(H_i^r+ 2\left(V(i+1) - V(\nu_{i_0})\right)\big) +	\exp\big(H_i^r\big)\nonumber\\
	& \hphantom{48(\ln n)^6\sum_{i=0}^{n-1}\Big(} +\exp\big(2\left(V(i+1) - V(\nu_{i_0})\right)\big) \bigg) + 48 (\ln n)^6 n\nonumber\\
	\le\ & 576(\ln n)^{10}\sum_{l=0}^{n_0}\exp\big(2 H_l\big) +48 (\ln n)^6 n.	\label{BT1.3}
\end{align}
Analogously to \eqref{BT1.9}, we get for environments $\omega\in B(n)\cap~D(n,m)$
\begin{align}
	\sum_{l=0}^{n_0} \exp\big(2H_l\big) \le\ & \sum_{i=0}^{m-1}\sum_{l=0}^{n_0} \exp\big(2H_l\big) \mathds{1}_{\left\{\frac{i}{\kappa m}(\ln n+ 2\ln\ln n)\le H_l\le \frac{i+1}{\kappa m}(\ln n+ 2\ln\ln n)\right\}}\nonumber\\
	\le\ &\sum_{i=0}^{m-1} \exp\left(2\frac{i+1}{\kappa m}(\ln n+ 2\ln\ln n)\right)n^{1-\frac{i}m} \ 
	\le\ (\ln n)^{\frac4{\kappa}} \sum_{i=0}^{m-1}n^{\frac{2(i+1)+ \kappa(m-i)}{\kappa m}}.\label{BT1.4}
\end{align}
Let $\delta >0$ be arbitrary and $m$ chosen as in \eqref{BT1.4.1}. Note that the function  $2(i+1) + \kappa(m-i)$ is increasing in $i$ for $\kappa \le 2$ and decreasing for $\kappa >2$. We therefore get as an upper bound for $\text{Var}_\omega T_n$, using equations \eqref{BT1.3} and \eqref{BT1.4} for environments $\omega\in B(n)\cap D(n,m)$, and $n$ large enough
\begin{align}
	\textnormal{Var}_{\omega}\left(T_n\right)\ &\le\ 576 (\ln n)^{10 + \frac4{\kappa}} \sum_{i=0}^{m-1}n^{\frac{2(i+1)+ \kappa(m-i)}{\kappa m}} +48 (\ln n)^6 n\nonumber\\
	&\le\ \begin{cases}
					\displaystyle	576 m(\ln n)^{10 + \frac4{\kappa}} n^{\frac{2m+\kappa}{\kappa m}}+ 48 (\ln n)^6 n\ \text{ if } \kappa\le 2 \vphantom{\frac12}\\
					\displaystyle	576 m(\ln n)^{10 + \frac4{\kappa}} n^{\frac{2+\kappa m}{\kappa m}} + 48 (\ln n)^6 n\ \text{ if } \kappa> 2  \vphantom{\frac12}
				\end{cases}
	\le\ \begin{cases}
						\displaystyle n^{\frac2\kappa +\delta}\ \text{ if } \kappa\le 2 \vphantom{\frac12}\\
						\displaystyle n^{1 +\delta}\ \ \text{ if } \kappa> 2. 		\vphantom{\frac12}			 
				\end{cases} \label{BT1.5}
\end{align}
Since $\delta>0$ was arbitrary, \eqref{BT1.5} together with Lemma \ref{PL5} yields for $\textbf{P}$-almost every environment $\omega$
\begin{align}
	\limsup_{n\to\infty} \frac{\ln \textnormal{Var}_\omega (T_n)}{\ln n}\ \le\ \max\left\{\frac2\kappa , 1\right\}.\label{BT1.6}
\end{align}
Now we turn to the lower bound. We first consider the case $\kappa <2$. For environments $\omega\in B_4(n)$ we have at least one block with a height of more than $\frac1\kappa(\ln n - 4\ln\ln n)$. Together with \eqref{B1} this yields for environments $\omega\in B_4(n)$
\begin{align}
	\textnormal{Var}_\omega (T_n)\ \ge\ \max_{0\le i\le n-1} W_i^2\ \ge\ \exp\left(\frac2\kappa(\ln n - 4\ln\ln n)\right)\ =\  (\ln n)^{-\frac8\kappa}n^{\frac2\kappa}. \label{BT1.7}
\end{align}
For $\kappa \ge2$ we have 
\begin{align*}
		\textnormal{Var}_\omega (T_n)\ \ge\ \sum_{i=0}^{n-1}W_i\ \ge\ \sum_{i=0}^{n-1} \exp(V(i) - V(i-1))\ =\ \sum_{i=0}^{n-1} \rho_i.
\end{align*}
The SLLN then yields that for $\textbf{P}$-almost every environment $\omega$ and n large enough we have
\begin{align}
	\textnormal{Var}_\omega (T_n)\ \ge\ \frac12 n\textbf{E} \rho_0. \label{BT1.8}
\end{align}
Note that due to Jensen's inequality we have $\textbf{E} \rho_0 \le \left(\textbf{E} \rho_0^\kappa \right)^{\frac1\kappa} = 1$ for $\kappa >1$. \vspace{11pt}\\
Lemma \ref{PL5} combined with \eqref{BT1.7} and \eqref{BT1.8} shows that for $\textbf{P}$-almost every environment $\omega$ we have
\begin{align*}
	\liminf_{n\to\infty} \frac{\ln \textnormal{Var}_\omega(T_n)}{\ln n}\ \ge\ \max\left\{\frac2\kappa , 1\right\},
\end{align*}
which together with \eqref{BT1.6} finishes the proof of part (\textit{b}). 
\end{proof}
Next, we define a modified RWRE which we force not to backtrack too far. Let $(\widetilde{X}_k^{(n)})_{k\in\N_0}$ be the random walk which has the same transition probabilities as $(X_k)_{k\in\N_0}$ with the following additional condition: after reaching a new block $\nu_k$ for the first time, the process forms from that time on a random walk in the environment $\widetilde{\omega}^k$, which is defined by 
\begin{align*}
	 \widetilde{\omega}_i^k:= \begin{cases} 1 & \text{for  } i=\nu_{(k - \left\lceil (\ln n)^2\right\rceil)\vee 0},\\
																					\omega_i & \text{else.}
														\end{cases}
\end{align*}
Now, the transition probabilities stay the same until the process reaches the next new block to the right. From that time on, the process forms a random walk in the environment $\widetilde{\omega}^{k+1}$ and so on. Due to this definition, the process $(\widetilde{X}_k^{(n)})_{k\in\N_0}$ cannot backtrack more than $\lceil(\ln n)^2\rceil$ blocks after reaching a new block for the first time. Note that there exists a coupling of the processes $(X_k)_{k\in\N_0}$ and $(\widetilde{X}_k^{(n)})_{k\in\N_0}$ such that we have
$
	\widetilde{X}_k^{(n)} \ge X_k
$
for all $k\in\N_0$ with equality holding until the process $(X_k)_{k\in\N_0}$ backtracks more than 
$\left\lceil (\ln n)^2\right\rceil$ blocks for the first time. For $n\in\N$ we define 
\begin{align}
	\widetilde{T}_n^{(r)}:\ \inf\left\{k\ :\ \widetilde{X}_k^{(r)}= n\right\} \label{widetildeT}
\end{align}
as the first time the restricted process which backtracks not more than $\lceil(\ln r)^2\rceil$ blocks hits position $n$. We further define
\begin{align}
	A(n):=\ \Big\{\ T_n\ =\ \widetilde{T}_n^{(n)}\ \Big\} \label{A(n)}
\end{align}
as the event that the random walk with reflection $(\widetilde{X}^{(n)}_k)_{k\in\N_0}$ reaches position $n$ at the same time as the random walk $(X_k)_{k\in\N_0}$. The next lemma shows that for analysing the distance in total variation of the distribution of the RWRE and its stationary distribution it is sufficient to consider the distribution of $(\widetilde{X}_k^{(n)})_{k\in\N_0}$.
\begin{lem}\label{PL1} 
Define the sequence $\widetilde{\omega}:=(\widetilde{\omega}_k)_{k\in\N}$ by
\begin{align}
	\widetilde{\omega}_k :=\ &\begin{cases}
										1 & \text{for } k=0, \\
										\omega_k & \text{for } k> 0
								 \end{cases}	\label{tilde}
\end{align}
Then, for all $k\in\{1,...,n\}$ we have for $\textbf{P}$-almost every environment $\omega$ 
	\begin{align*}
		\lim_{n\to\infty} P^k_{\widetilde{\omega}}\left(A(n)^c\right)\ =\ 0.
	\end{align*}
\end{lem}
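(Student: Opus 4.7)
The plan is to bound $P^k_{\widetilde\omega}(A(n)^c)$ by a sum over the ladder index $l$ at which a large backtrack could first occur, and to show each summand is super-polynomially small on a good environment. Set $m:=\lceil(\ln n)^2\rceil$. By construction of the coupling, $A(n)^c$ is exactly the event that, for some $l\in\{m+1,\dots,n_0\}$, the walk $X$ reaches $\nu_l$ and afterwards descends below $\nu_{l-m}$ before hitting $n$; for $l\leq m$ the reflection at $0=\nu_0$ built into $\widetilde\omega$ coincides with the reflection in $\widetilde X^{(n)}$, so no break is possible there. Applying the strong Markov property at $\sigma_{\nu_l}:=\inf\{t:X_t=\nu_l\}$, and noting that for $l-m\geq 1$ the walk started at $\nu_l$ cannot visit $0$ before reaching $\nu_{l-m}$ (so the reflection distinguishing $\widetilde\omega$ from $\omega$ is immaterial), I obtain
\begin{equation*}
	P^k_{\widetilde\omega}\bigl(A(n)^c\bigr)\ \leq\ \sum_{l=m+1}^{n_0}P^{\nu_l}_\omega\bigl(T_{\nu_{l-m}}<T_n\bigr).
\end{equation*}

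Next I would apply the classical 1D RWRE hitting-probability formula,
\begin{equation*}
	P^{\nu_l}_\omega\bigl(T_{\nu_{l-m}}<T_n\bigr)\ =\ \frac{\sum_{i=\nu_l}^{n-1}e^{V(i)}}{\sum_{i=\nu_{l-m}}^{n-1}e^{V(i)}},
\end{equation*}
bounding the denominator below by the single term $e^{V(\nu_{l-m})}$. For the numerator, the strict decrease of the ladder potentials $V(\nu_{l'})$ in $l'$, together with the within-block bound $V(i)\leq V(\nu_{l_i})+H_{l_i}$, yields $V(i)\leq V(\nu_l)+H_{\max}$ for $i\in[\nu_l,n-1]$, where $H_{\max}:=\max_{l'\leq n_0}H_{l'}$. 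On the event $B_3(n)$ from Lemma~\ref{PL3} one has $e^{H_{\max}}\leq n^{1/\kappa}(\ln n)^{2/\kappa}$, so each summand is at most $n^{1+1/\kappa}(\ln n)^{2/\kappa}\exp(V(\nu_l)-V(\nu_{l-m}))$.

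The remaining step is a uniform lower bound $V(\nu_{l-m})-V(\nu_l)\geq\tfrac{\bar\xi}{2}(\ln n)^2$ for all $l\in\{m+1,\dots,n_0\}$, on a $\textbf{P}$-full-measure event. This drop equals $\sum_{j=l-m+1}^{l}\xi_j$ for i.i.d.~strictly positive ladder-drop increments $\xi_j:=V(\nu_{j-1})-V(\nu_j)$ of mean $\bar\xi>0$, and since $\xi_j>0$ the one-sided Chernoff bound
\begin{equation*}
	\textbf{P}\bigl(\xi_1+\cdots+\xi_m<\tfrac{\bar\xi}{2}m\bigr)\ \leq\ e^{-c'm}
\end{equation*}
holds for some $c'>0$ (the moment generating function of $-\xi_1$ is trivially bounded by $1$ for positive arguments, so Cram\'er's theorem applies with non-trivial rate below the mean). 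A union bound over $l\leq 2\bar\nu n$ (using $B_2(n)$ from Lemma~\ref{PL11} to restrict the range of $l$) gives an exceptional $\textbf{P}$-probability of order $n\,e^{-c'(\ln n)^2}$, which is summable in $n$; Borel--Cantelli then yields the desired uniform estimate for all $n\geq N(\omega)$. Plugging back,
\begin{equation*}
	P^k_{\widetilde\omega}\bigl(A(n)^c\bigr)\ \leq\ n^{2+1/\kappa}(\ln n)^{2/\kappa}\exp\!\bigl(-\tfrac{\bar\xi}{2}(\ln n)^2\bigr)\ \longrightarrow\ 0,
\end{equation*}
since $(\ln n)^2$ in the exponent dwarfs any polynomial in $n$.

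The step I expect to be the main obstacle is this uniform lower bound on the ladder drop: although the Chernoff estimate itself is elementary, the delicate point is to run it simultaneously over all admissible $l$ and verify that the polynomial union-bound factor $O(n)$ is indeed dominated by the super-polynomial decay $e^{-c'(\ln n)^2}$, which is precisely what forces the choice $m=\lceil(\ln n)^2\rceil$ (rather than merely $m\to\infty$). The other ingredients---the strong Markov decomposition of $A(n)^c$ and the ratio estimate via $B_3(n)$ and the monotonicity of ladder potentials---are either routine or direct applications of results from Section~\ref{Potential}.
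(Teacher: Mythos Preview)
Your argument is correct and provides a self-contained proof where the paper itself simply defers to Lemma~4.5 in \cite{PZ}. The strategy you use---decomposing $A(n)^c$ over the ladder index at which the large backtrack occurs, applying the gambler's-ruin formula, bounding the numerator via $B_3(n)$ and the monotonicity of ladder potentials, and controlling the exponent $V(\nu_l)-V(\nu_{l-m})$ through a Chernoff bound on the i.i.d.\ ladder overshoots $\xi_j$---is the natural one and is essentially the approach underlying \cite{PZ} as well. Two minor remarks: first, your appeal to $B_2(n)$ to restrict the range of $l$ is unnecessary, since $n_0\le n$ holds deterministically (each block has length at least one), and this already suffices for the union bound; second, the summation indices in your hitting-probability formula are off by one relative to the convention $C(x-1,x)=e^{-V(x)}$ used in the paper, but this is of course harmless for the estimate.
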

For a proof see Lemma 4.5 in \cite{PZ}.

\begin{lem}\label{BL2}
	For environments 
	\begin{align}
		\omega\in F(\nu_{n+1}):=  B(\nu_{n+1})\cap E\left(\nu_{n+1},\frac23\right),\label{F}
	\end{align}
$n$ large enough, in particular such that $C^-(\omega) < \ln n$ and blocks with 
	\begin{align}
			H_n\ >\ \frac3{4\kappa}\ln(\nu_{n+1})\label{BL2.0.1}	
	\end{align}
	we have 
	\begin{align*}
		\Big(E_\omega^{\nu_{n}}T_{\nu_{n+1}}\Big)^2 \ \le\ 4\textnormal{Var}_\omega(T_{\nu_{n+1}}- T_{\nu_{n}}).
	\end{align*}	
Further, we have $\Big(E_{\widetilde{\omega}}^{\nu_{n}}T_{\nu_{n+1}}\Big)^2  \le 4\textnormal{Var}_{\widetilde{\omega}}(T_{\nu_{n+1}}- T_{\nu_{n}})$ and $\Big(E_\omega^{\nu_{n}}\widetilde{T}_{\nu_{n+1}}^{(n)}\Big)^2  \le 4\textnormal{Var}_\omega\left(\widetilde{T}_{\nu_{n+1}}^{(n)}- \widetilde{T}_{\nu_{n}}^{(n)}\right)$.
\end{lem}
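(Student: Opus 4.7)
The starting point is the pair of formulas \eqref{Expectation} and \eqref{B0.1}. Since by the strong Markov property the increments $(T_{k+1}-T_k)_{k\ge\nu_n}$ are $P_\omega$-independent, both quantities of interest decompose as
\[
E_\omega^{\nu_n}T_{\nu_{n+1}}=\sum_{k=\nu_n}^{\nu_{n+1}-1}(1+2W_k),\qquad \textnormal{Var}_\omega(T_{\nu_{n+1}}-T_{\nu_n})=\sum_{k=\nu_n}^{\nu_{n+1}-1}\textnormal{Var}_\omega(T_{k+1}-T_k).
\]
Fix $k^\star\in(\nu_n,\nu_{n+1}]$ realizing $V(k^\star)-V(\nu_n)=H_n$. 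The idea is to show that, under the environmental hypotheses, both sides are dominated up to lower order by the single step across the peak at $k^\star$.

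For the lower bound on the variance I keep only the $k=k^\star-1$ summand, and inside it only the piece $4W_{k^\star-1}^2$ from \eqref{B0.1}. The $j=\nu_n$ term in $W_{k^\star-1}=\sum_{j\le k^\star-1}\exp(V(k^\star)-V(j))$ gives immediately $W_{k^\star-1}\ge\exp(H_n)$, so
\[
\textnormal{Var}_\omega(T_{\nu_{n+1}}-T_{\nu_n})\ \ge\ 4\exp(2H_n).
\]
For the upper bound on the expectation I apply Lemma \ref{BL1} to each $W_k$, which for $\omega\in B_1(\nu_{n+1})\cap C$ and $n$ large (so that $C^-(\omega)<\ln n$) yields $W_k\le(\ln\nu_{n+1})^2(1+2\exp(V(k+1)-V(\nu_n)))$; the condition $\omega\in B_1(\nu_{n+1})$ also bounds the block width by $(\ln\nu_{n+1})^2$. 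Since $\exp(H_n)\ge\nu_{n+1}^{3/(4\kappa)}$, the polylog factors from Lemma \ref{BL1} and from the block width are absorbed into a $1+o(1)$. The essential sharpening is
\[
\sum_{k=\nu_n}^{\nu_{n+1}-1}\exp\!\big(V(k+1)-V(\nu_n)\big)\ \le\ (1+o(1))\exp(H_n),
\]
which follows from $\omega\in E(\nu_{n+1},\tfrac{2}{3})$: the assumption $H_n>\tfrac{3}{4\kappa}\ln\nu_{n+1}>\tfrac{2}{3\kappa}\ln\nu_{n+1}$ makes the ascent to the peak a ``large'' one in the sense of $E(\nu_{n+1},\tfrac{2}{3})$, which then forbids any further ascent of size $>\tfrac{1}{2\kappa}\ln\nu_{n+1}$ inside the $(\ln\nu_{n+1})^2$-neighbourhood of $k^\star$; hence $V$ descends essentially monotonically from $k^\star$ to $\nu_{n+1}$, and the exponential sum is controlled by a geometric series around the peak. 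Assembling these pieces gives $E_\omega^{\nu_n}T_{\nu_{n+1}}\le(2+o(1))\exp(H_n)$, and squaring yields $(E_\omega^{\nu_n}T_{\nu_{n+1}})^2\le(4+o(1))\exp(2H_n)\le 4\,\textnormal{Var}_\omega(T_{\nu_{n+1}}-T_{\nu_n})$ for all $n$ large enough.

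The main obstacle is producing the $(1+o(1))$ factor in the expectation estimate: the crude application of Lemma \ref{BL1} produces an extra $(\ln\nu_{n+1})^c$, and only the no-double-ascent structure encoded in $E(\nu_{n+1},\tfrac{2}{3})$ removes it. The two remaining inequalities follow from the same scheme with minor bookkeeping. For $\widetilde\omega$ one replaces each $W_k$ by $W_k^0$ in the formulas for $E$ and $\textnormal{Var}$; by $\omega\in C$ the difference $W_k-W_k^0=\exp(V(k+1))\,C^-(\omega)$ is $o(\exp(H_n))$ and does not affect the asymptotics of either side. For the restricted walk $\widetilde X^{(n)}$, the reflection is placed $\lceil(\ln n)^2\rceil$ blocks to the left of the current position, i.e.\ outside the peak-dominating sums written above, so the same leading-order contributions from the $k^\star$-step survive on both sides and the same constant $4$ is attained.
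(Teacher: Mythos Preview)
Your scheme --- bound $E_\omega^{\nu_n}T_{\nu_{n+1}}$ above and $\textnormal{Var}_\omega(T_{\nu_{n+1}}-T_{\nu_n})$ below, each by a multiple of $\exp(H_n)$ and $\exp(2H_n)$ respectively --- cannot produce the constant-factor inequality $(E)^2\le 4\,\textnormal{Var}$, and the gap is precisely where you claim the polylog factors are ``absorbed''. The lemma asks for a sharp comparison, not an order-of-magnitude one: the fact that $\exp(H_n)\ge\nu_{n+1}^{3/(4\kappa)}$ dominates any power of $\ln\nu_{n+1}$ is irrelevant, because those same polylog factors appear on \emph{both} sides of your comparison at the scale $\exp(H_n)$. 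Concretely, Lemma~\ref{BL1} and the block-width bound give at best $E_\omega^{\nu_n}T_{\nu_{n+1}}\le C(\ln\nu_{n+1})^4\exp(H_n)$, while your variance bound is $4\exp(2H_n)$; the ratio $(E)^2/\textnormal{Var}$ is then of order $(\ln\nu_{n+1})^8$, which diverges. Your sharper claim $\sum_k\exp(V(k+1)-V(\nu_n))\le(1+o(1))\exp(H_n)$ is also not justified: the set $E(\nu_{n+1},\tfrac23)$ forbids a \emph{second large ascent} after the peak, but it does not prevent the potential from lingering within $O(1)$ of the peak for up to $(\ln\nu_{n+1})^2$ steps, in which case the sum is genuinely of order $(\ln\nu_{n+1})^2\exp(H_n)$. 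Even under a clean monotone descent the sum is only $C\exp(H_n)$ with some $C>1$ depending on the drift, never $(1+o(1))\exp(H_n)$.

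The paper's proof avoids this by not estimating the two sides separately. It expands the difference $(E_\omega^{\nu_n}T_{\nu_{n+1}})^2-\textnormal{Var}_\omega(T_{\nu_{n+1}}-T_{\nu_n})$ directly using \eqref{Expectation} and \eqref{B0.1}. The key point is that the dangerous cross terms $8\sum_{j<l}W_jW_l$ in the square of the expectation are, after an algebraic identity, reduced to
\[
8\sum_{j<l}W_j\sum_{k=j+2}^{l}\exp\big(V(l+1)-V(k)\big),
\]
i.e.\ each $W_l$ is replaced by the part of $W_l$ coming from sites \emph{strictly to the right of} $j+1$. This remainder is then split according to whether $W_j$ is small or large compared with $\exp(H_n)$; only in the large-$W_j$ case is the no-second-ascent property of $E(\nu_{n+1},\tfrac23)$ invoked, to bound the inner sum by $\nu_{n+1}^{1/(2\kappa)}\ll W_j$, whence the whole term is $\le\sum_j W_j^2\le\textnormal{Var}$. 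The cancellation between the cross terms of $(E)^2$ and of $\textnormal{Var}$ is what makes the constant $4$ (in fact $1+3$) attainable; a separate treatment of numerator and denominator loses exactly this cancellation.
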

\begin{proof}
	First, we note that due to \eqref{Expectation} we have
	\begin{align*}
	 \Big(E_\omega^{\nu_n}T_{\nu_{n+1}}\Big)^2  \
	=\ &(\nu_{n+1} - \nu_n)^2 + 4(\nu_{n+1} - \nu_n)\sum_{j=\nu_n}^{\nu_{n+1}-1} W_j +4\sum_{j=\nu_n}^{\nu_{n+1}-1} W_j^2 + 8 \sum_{j=\nu_n}^{\nu_{n+1}-2}\sum_{l=j+1}^{\nu_{n+1}-1} W_j W_l.		
	\end{align*}
Together with \eqref{B0.1} this yields	
	\begin{align}
		\Big(E_\omega^{\nu_n}T_{\nu_{n+1}}\Big)^2 - \textnormal{Var}_\omega(T_{\nu_{n+1}}- T_{\nu_{n}}) 
		= & 	\big(\nu_{n+1} - \nu_n\big)^2 + 4\big(\nu_{n+1} - \nu_n-1\big)\sum_{j=\nu_n}^{\nu_{n+1}-1} W_j		\label{BL2.3} \\
		 & + 8\sum_{j=\nu_n}^{\nu_{n+1}-2}\sum_{l=j+1}^{\nu_{n+1}-1}W_j\Big(W_l - \exp\big(V(l+1) - V(j+1)\big)\big(1 + W_j\big)\Big) \label{BL2.4} \\
		 & - 8\sum_{i =-\infty}^{\nu_n -1}\sum_{j=\nu_n}^{\nu_{n+1}-1} \exp\big(V(j+1)-V(i+1)\big)\left(W_i + W_i^2\right).\label{BL2.5}
	\end{align}
Note that \eqref{BL2.5} is negative, since all terms of the sums are positive and therefore can be neglected on our way to find an upper bound. \vspace{11 pt}\\
Next, recall that for $\omega\in B_1(\nu_{n+1})$, all blocks in the interval $[-\nu_{n+1},\nu_{n+1}]$ are smaller than $(\ln \nu_{n+1})^2$, and thus we obtain using  Lemma \ref{BL1} for $\omega\in F(\nu_{n+1})$ the following upper bound for \eqref{BL2.3}
\begin{equation}\label{BL2.6}
(\nu_{n+1} - \nu_n)^2 + 4(\nu_{n+1} - \nu_n-1)\sum_{j=\nu_n}^{\nu_{n+1}-1} W_j\ 
\leq (\ln \nu_{n+1})^4+  4(\ln \nu_{n+1})^6\left(1+  2\exp\left( H_n\right)\right)		 
\end{equation}
On the other hand, note that
\begin{equation}\label{ouf}
\textnormal{Var}_\omega(T_{\nu_{n+1}}- T_{\nu_n})= \textnormal{Var}_{\Theta^{\nu_n} \omega}(T_{\nu_{n+1}- \nu_n})\geq \max\limits_{0 \leq i \leq \nu_{n+1}- \nu_n}W_i^2 \geq \exp\left(2H_n\right)
\end{equation}
where we used \eqref{BT1.7} for the second inequality.
From \eqref{BL2.6}  and \eqref{ouf} we see, taking into account \eqref{BL2.0.1}, that the r.h.s. of \eqref{BL2.3} is bounded above by $\textnormal{Var}_\omega(T_{\nu_{n+1}}- T_{\nu_n})$.
Furthermore, we observe that for $j<l$ we have 
\begin{align*}
	& W_l - \exp\big(V(l+1) - V(j+1)\big)\big(1 + W_j\big) \vphantom{\sum_{k=-\infty}^{l-1}}\\	
	=\ &\sum_{k=-\infty}^{l} \exp\big(V(l+1)-V(k)\big) \\
		&- \exp\big(V(l+1) - V(j+1)\big)\left(1 + \sum_{k=-\infty}^{j}\exp\big(V(j+1)-V(k)\big)\right) \\
	=\ &\begin{cases}
										\displaystyle \sum_{k=j+2}^{l} \exp\big(V(l+1) - V(k)\big) &\text{ if } j<l-1 \\
										\displaystyle 0 &\text{ if } j=l-1.\vphantom{\sum_i^k}
																			 \end{cases}
\end{align*}	
This simplifies \eqref{BL2.4} for environments $\omega \in F(\nu_{n+1})$ to
\begin{align}
				&8\sum_{j=\nu_n}^{\nu_{n+1}-2}\sum_{l=j+1}^{\nu_{n+1}-1}W_j\Big(W_l - \exp\big(V(l+1) - V(j+1)\big)\big(1 + W_j\big)\Big)\nonumber\\
		=\ &8\sum_{j=\nu_n}^{\nu_{n+1}-3}\sum_{l=j+2}^{\nu_{n+1}-1}W_j\sum_{k=j+2}^{l} \exp\big(V(l+1) - V(k)\big)\nonumber \displaybreak[0]\\
		\le\ & 8\ln(\nu_{n+1})^4\exp\big(H_n\big)\sum_{j=\nu_n}^{\nu_{n+1}-3} W_j \mathds{1}_{\left\{W_j \le \frac{\exp\left(H_n\right)}{8\ln(\nu_{n+1})^6}\right\}} \nonumber\\	
				 & + 8\sum_{j=\nu_n}^{\nu_{n+1}-3} W_j \mathds{1}_{\left\{W_j > \frac{\exp\left(H_n\right)}{8\ln(\nu_{n+1})^6}\right\}} 
				 			\sum_{l=j+2}^{\nu_{n+1}-1}\sum_{k=j+2}^{l} \exp\big(V(l+1) - V(k)\big)\nonumber\\
		 \le\ & \exp\big(2H_n\big)  + 8\sum_{j=\nu_n}^{\nu_{n+1}-3} W_j \mathds{1}_{\left\{W_j > \frac{\exp\left(H_n\right)}{8\ln(\nu_{n+1})^6}\right\}}  
				 			\sum_{l=j+2}^{\nu_{n+1}-1}\sum_{k=j+2}^{l} \exp\big(V(l+1) - V(k)\big).\label{BL2.7}
\end{align}
To get an upper bound for \eqref{BL2.4}, we therefore have to control the last two sums in \eqref{BL2.7}. We note that for an environment $\omega\in F(\nu_{n+1})$ and $\nu_n< j< \nu_{n+1}$ with $W_j > \frac{\exp\left(H_n\right)}{8\ln(\nu_{n+1})^6} $ we have 
\begin{align*}
	\exp\big(V(j+1) - V(\nu_n)\big)\ \ge\  \frac{\exp\big(H_n\big)}{17\ln(\nu_{n+1})^8}
\end{align*}
because otherwise
\begin{align*}
	W_j 
	\le \hspace*{-3pt}&\sum_{k=-\infty}^{\nu_n - \lceil(\ln \nu_n)^2\rceil}\hspace*{-3pt} \exp\big(V(j+1)-V(k)\big) + \lceil(\ln \nu_n)^2\rceil + (j+1-\nu_n+ \lceil(\ln \nu_n)^2\rceil )\exp\big(V(j+1)- V(\nu_n)\big)\vphantom{\frac12}\\
	\le & 2(\ln \nu_n)^2 +  2(\ln \nu_n)^2 \frac{\exp\big(H_n\big)}{17\ln(\nu_{n+1})^8}\
	\le\ \frac{\exp\big(H_n\big)}{8\ln(\nu_{n+1})^6}.
\end{align*}
Thus, we get for $\omega\in F(\nu_{n+1})$ using assumption \eqref{BL2.0.1} 
\begin{align}
	V(j+1) - V(\nu_{n})\ \ge\ \frac2{3\kappa} \ln(\nu_{n+1}).  \label{BL2.9.1}
\end{align}
This yields the following upper bound for the summands in \eqref{BL2.7} for environments $\omega \in F(\nu_{n+1})$
\begin{align}
	&8W_j \mathds{1}_{\left\{W_j > \frac{\exp\left(H_n\right)}{8\ln(\nu_{n+1})^6}\right\}} \sum_{l=j+2}^{\nu_{n+1}-1}\sum_{k=j+2}^{l} \exp\big(V(l+1) - V(k)\big) \nonumber\\
	\le\ & 8W_j \mathds{1}_{\left\{W_j > \frac{\exp\left(H_n\right)}{8\ln(\nu_{n+1})^6}\right\}} \ln(\nu_{n+1})^4 \left( \nu_{n+1}\right)^{\frac1{2\kappa}}\vphantom{\sum_i^k}\nonumber\\
	\le\ & W_j^2 \mathds{1}_{\left\{W_j > \frac{\exp\left(H_n\right)}{8\ln(\nu_{n+1})^6}\right\}}, \vphantom{\sum_i^k}\label{BL2.10}
\end{align}
where we used that for $\omega\in E\left(\nu_{n+1},\frac23\right)$ the potential does not increase more than $\frac1{2\kappa}\ln(\nu_{n+1})$ on the interval $\{j,...,\nu_{n+1}\}$ due to \eqref{BL2.9.1}.

For environments $\omega\in F(\nu_{n+1})$, \eqref{BL2.7} and \eqref{BL2.10} together now imply the following upper bound for \eqref{BL2.4} 
\begin{align}
	8\sum_{j=\nu_n}^{\nu_{n+1}-2}\hspace{5pt} \sum_{l=j+1}^{\nu_{n+1}-1}W_j\Big(W_l - \exp\big(V(l+1) - V(j+1)\big)\big(1 + W_j\big)\Big)\
	\le\ &\exp\big(2H_n\big) + \sum_{j=\nu_n}^{\nu_{n+1}-3} W_j^2\nonumber\\
	 \le\  &2 \textnormal{Var}_\omega\big(T_{\nu_{n+1}}- T_{\nu_n}\big). \label{BL2.11}
\end{align}
(To see the last inequality, use the same argument as in \eqref{ouf}). Therefore, \eqref{BL2.6} and \eqref{BL2.11} finally yield
\begin{align*}
	\Big(E_\omega^{\nu_n}(T_{\nu_{n+1}})\Big)^2 - \textnormal{Var}_\omega\big(T_{\nu_{n+1}}- T_{\nu_n}\big)\ \le 3 \textnormal{Var}_\omega\big(T_{\nu_{n+1}}- T_{\nu_n}\big).
\end{align*}
Note that the proof of the statement is the same for $\widetilde{\omega}$ instead of $\omega$ and for $\widetilde{T}_{\nu_{n+1}}^{(n)}$ instead of $T_{\nu_{n+1}}$. The considered quenched expectation in these cases is even smaller. In line \eqref{BL2.5}, the number of summands is different in these cases but since they are all negative we can use the same upper bound $0$ for all cases.
\end{proof}
\begin{lem}\label{PL9}
	For $\textbf{P}$-almost every environment $\omega$ we have
	\begin{align*}
 \limsup_{n\to\infty} \frac{E_{\widetilde{\omega}}(T_n) - E_{\widetilde{\omega}}\left(T_{n - \left\lceil 2(\ln n)^2\right\rceil}\right)}{\sqrt{ 	\textnormal{Var}_{\widetilde{\omega}}\left(T_n\right)}}\ &\le\ 2. 
\end{align*}
\end{lem}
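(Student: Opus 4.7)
Since $\widetilde{\omega}_0 = 1$ forces $\rho_0 = 0$, formula \eqref{Expectation} specializes to $E_{\widetilde{\omega}} T_k = k + 2\sum_{j=0}^{k-1} W_j^0$, whence the numerator equals
\begin{align*}
  \Delta_n \ :=\ \lceil 2(\ln n)^2\rceil + 2\sum_{j = n - \lceil 2(\ln n)^2\rceil}^{n-1} W_j^0.
\end{align*}
I work on the full-measure set where $\omega \in B(\nu_{n+1}) \cap D(\nu_{n+1}, m) \cap E(\nu_{n+1}, 2/3)$ for all $n$ large, guaranteed by Lemma \ref{PL5}. By $B_1(\nu_{n+1})$, each block intersecting the window $[n - \lceil 2(\ln n)^2\rceil, n-1]$ has width at most $(\ln n)^2$, so the set $\mathcal{I}_n$ of indices of such blocks has cardinality $O((\ln n)^2)$. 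Let $k^\ast \in \mathcal{I}_n$ maximize $H_k$, and set $H^\ast := H_{k^\ast}$. Bounding $W_j^0 \leq W_j$ by Lemma \ref{BL1} and grouping block-by-block gives
\begin{align*}
  \Delta_n \ \leq\ O\bigl((\ln n)^6\bigr) + 4(\ln n)^4 \sum_{k \in \mathcal{I}_n} \exp(H_k).
\end{align*}

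If $H^\ast \leq \tfrac{3}{4\kappa}\ln n$ (Case A), then $\Delta_n \leq (\ln n)^{O(1)} n^{3/(4\kappa)}$. The lower bounds \eqref{BT1.7}--\eqref{BT1.8} from the proof of Theorem \ref{BT1}(b) yield $\sqrt{\textnormal{Var}_{\widetilde{\omega}}(T_n)} \geq n^{\min\{1/\kappa,\, 1/2\} - o(1)}$ almost surely; since $\tfrac{3}{4\kappa} < \min\{\tfrac{1}{\kappa}, \tfrac{1}{2}\}$ for every $\kappa > 0$, the ratio tends to zero.

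If $H^\ast > \tfrac{3}{4\kappa}\ln n$ (Case B), Lemma \ref{BL2} applies to block $k^\ast$ and gives
\begin{align*}
  E_{\widetilde{\omega}}^{\nu_{k^\ast}} T_{\nu_{k^\ast+1}} \ \leq\ 2\sqrt{\textnormal{Var}_{\widetilde{\omega}}\bigl(T_{\nu_{k^\ast+1}} - T_{\nu_{k^\ast}}\bigr)} \ \leq\ 2\sqrt{\textnormal{Var}_{\widetilde{\omega}}(T_n)},
\end{align*}
where the second inequality uses that the successive block crossings $T_{\nu_{k+1}} - T_{\nu_k}$ under $P_{\widetilde{\omega}}$ are independent (by the strong Markov property), so their variances add. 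The typicality condition $\omega \in E(\nu_{n+1}, 2/3)$, applied at the peak of block $k^\ast$, forces every block in $\mathcal{I}_n$ lying within $(\ln n)^2$ of $k^\ast$ to have height at most $\tfrac{1}{2\kappa}\ln n$, contributing only $O\bigl((\ln n)^{O(1)}\, n^{1/(2\kappa)}\bigr) = o\bigl(\sqrt{\textnormal{Var}_{\widetilde{\omega}}(T_n)}\bigr)$ to $\Delta_n$. A second application of $E(\nu_{n+1}, 2/3)$ at a suitably shifted pivot handles any remaining blocks of $\mathcal{I}_n$ at distance more than $(\ln n)^2$ from $k^\ast$. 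Combining, $\Delta_n \leq (2+o(1))\sqrt{\textnormal{Var}_{\widetilde{\omega}}(T_n)}$, which proves the claim.

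The main obstacle is the sharpness of the constant: a naive Cauchy--Schwarz estimate over two blocks of heights comparable to $H^\ast$ would yield $2\sqrt{2}$ rather than $2$. The asymmetry of $E(\nu_{n+1}, 2/3)$---it restricts the potential after but not before a large increase---makes it delicate to exclude a second ``tall'' block upstream of $k^\ast$ within $W_n$. The bound $2$ is achieved precisely because a single dominating block governs both the numerator (via Lemma \ref{BL1}) and the denominator (via Lemma \ref{BL2}), with all other contributions being of strictly smaller order.
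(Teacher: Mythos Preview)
Your approach is essentially the paper's: decompose the crossing-time sum over the window by block height, bound the short blocks via Lemma~\ref{BL1} against the lower bound on $\sqrt{\textnormal{Var}_{\widetilde{\omega}}(T_n)}$ from Theorem~\ref{BT1}(b), and handle the single tall block via Lemma~\ref{BL2}, with $E(\cdot,\tfrac{2}{3})$ ensuring uniqueness; the paper uses the threshold $\tfrac{2}{3\kappa}\ln n$ whereas you use $\tfrac{3}{4\kappa}\ln n$ (arguably cleaner, since it matches hypothesis~\eqref{BL2.0.1} exactly). One slip to fix in Case~A: Theorem~\ref{BT1}(b) gives $\sqrt{\textnormal{Var}_{\widetilde{\omega}}(T_n)} \geq n^{\max\{1/\kappa,\,1/2\}-o(1)}$, not $\min$, and your stated inequality $\tfrac{3}{4\kappa} < \min\{\tfrac{1}{\kappa},\tfrac{1}{2}\}$ is false for $\kappa\le\tfrac{3}{2}$; with $\max$ in place the comparison is valid for all $\kappa>0$ and the argument goes through (also, since $n$ here is a position, the good-environment sets should be $B(n)$, $E(n,\tfrac{2}{3})$ rather than $B(\nu_{n+1})$, $E(\nu_{n+1},\tfrac{2}{3})$).
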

\begin{proof}
First, we note that for environments $\omega\in E\left(n, \frac23\right)$ we have at most one increase of the potential of more than $\frac2{3\kappa}\ln n$ in the interval $[n - \left\lceil 2(\ln n)^2 \right\rceil,n]$. Therefore, we get for $\omega\in B(n)\cap E\left(n, \frac23\right) \cap C$ and $n$ large enough  
\begin{align*}
		\frac{E_{\widetilde{\omega}}^{n - \left\lceil 2(\ln n)^2 \right\rceil}(T_n)}{\sqrt{ 	\textnormal{Var}_{\widetilde{\omega}}\left(T_n\right)}}\
		 \le\ &\frac{1}{\sqrt{ 	\textnormal{Var}_{\widetilde{\omega}}\left(T_n\right)}}\sum_{k = n - \left\lceil 2(\ln n)^2 \right\rceil}^{n-1}E_{\widetilde{\omega}}^{k}T_{k+1}\left(\mathds{1}_{\left\{H_{k_0} \le \frac2{3\kappa} \ln n\right\}}+\mathds{1}_{\left\{H_{k_0} > \frac2{3\kappa} \ln n\right\}}\right) \\ 
		 \le\ & \frac{1}{\sqrt{ 	\textnormal{Var}_{\widetilde{\omega}}\left(T_n\right)}}\left(\lceil 2(\ln n)^2\rceil (\ln n)^2 \left(n^{\frac2{3\kappa}}+1\right) +  2\sqrt{\textnormal{Var}_{\widetilde{\omega}}\left(T_n\right)}\right),
	\end{align*}
	where we additionally used Lemma \ref{BL1} and \ref{BL2} to obtain the last line. Lemma \ref{PL5} and Theorem \ref{BT1} (note that Theorem \ref{BT1} is also true for $\widetilde{\omega}$) now finish the proof.
\end{proof}
\begin{lem}\label{BL2.1}
	Assume Assumptions 1 - 3. For any $\varepsilon <\frac13$, there exists an $\eta >0$ such that for  
	\begin{align*}
		A_n:=\ \left\{ \exists\ i,j\in\N,\  1\le i\le n :\ H_i\ >\ \frac{1-\varepsilon}{\kappa} \ln n,\  E_\omega^{\nu_{i}}\left(\widetilde{T}_{\nu_{i+1}}^{(n)}\right)^j\ >\ 3j! 2^j \left(E_\omega^{\nu_{i}}\widetilde{T}_{\nu_{i+1}}^{(n)}\right)^j\right\}
	\end{align*}
	we have
	\begin{align*}
		\textbf{P} \left( A_n\right)\ =\ o\left(n^{-\eta}\right).
	\end{align*}
\end{lem}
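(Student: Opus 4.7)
The strategy is to reduce the family of moment bounds
$$ E_\omega^{\nu_i}\bigl(\widetilde{T}_{\nu_{i+1}}^{(n)}\bigr)^j \le 3\cdot j!\cdot 2^j \cdot\bigl(E_\omega^{\nu_i}\widetilde{T}_{\nu_{i+1}}^{(n)}\bigr)^j \qquad (j\ge 1), $$
which must hold uniformly in $j$, to a single exponential-moment bound of the form
$$ E_\omega^{\nu_i} \exp\!\bigl(s\,\widetilde{T}_{\nu_{i+1}}^{(n)}\bigr) \le 3, \qquad s \asymp \frac{1}{E_\omega^{\nu_i}\widetilde{T}_{\nu_{i+1}}^{(n)}}. $$
Once this exponential bound is established on a set of typical environments, extracting the $j$-th term of the power-series expansion on the left gives the desired moment bound for every $j$ simultaneously.

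To obtain the exponential bound, I would apply Khas'minskii's lemma, which converts a uniform expected-hitting-time bound $\sup_x E_\omega^x \widetilde{T}_{\nu_{i+1}}^{(n)} \le M$ into an exponential-moment bound $E_\omega^x \exp(\widetilde{T}_{\nu_{i+1}}^{(n)}/(kM)) \le k/(k-1)$. The key auxiliary estimate is therefore
$$ M(i) := \sup_x E_\omega^x \widetilde{T}_{\nu_{i+1}}^{(n)} \le C_1 \, E_\omega^{\nu_i}\widetilde{T}_{\nu_{i+1}}^{(n)}, $$
with $C_1$ close to $1$, the supremum running over all states accessible to the restricted walk. Using formula \eqref{Expectation} (adapted to the restricted environment) and Lemma \ref{BL1}, on a good environment (in the sense of Lemma \ref{PL5} and additionally $\omega\in E(n,a)$ from Lemma \ref{PL7} for suitable $a$) no other deep block is within $\lceil(\ln n)^2\rceil$ positions of block $i$, so the dominant contribution $\exp(H_i) \ge n^{(1-\varepsilon)/\kappa}$ from block $i$ itself overwhelms the polylog$(n)$ corrections from the backtracking neighbourhood, giving $C_1 = 1+o(1)$. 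Tuning the constant $k$ in the sharpened Khas'minskii inequality (e.g.\ choosing $k\in[3/2,2]$) then yields the displayed exponential bound with constant $3$.

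For the probability estimate, the complement of the good environment has $\textbf{P}$-probability $O(n^{-2})$ by Lemma \ref{PL5}, while Lemma \ref{PL4} (applied with $m$ large and $l<m$ so that $l/m>1-\varepsilon$) bounds the number of blocks with $H_i > \frac{1-\varepsilon}{\kappa}\ln n$ by $n^\varepsilon$ off an event of the same order. Combining these gives $\textbf{P}(A_n) = O(n^{-2}) = o(n^{-\eta})$ for any $0<\eta<2$. The main obstacle is the uniform expectation bound $M(i) \le C_1 E_\omega^{\nu_i}\widetilde{T}_{\nu_{i+1}}^{(n)}$ with $C_1$ close enough to $1$ to drive Khas'minskii's constant below the target: the supremum runs over states in the $\lceil(\ln n)^2\rceil$-block backtracking neighbourhood of block $i$, and one must confirm that the expected time to re-enter block $i$ from the worst such state is asymptotically negligible compared to the peak-crossing time $\exp(H_i)$. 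The separation of scales makes the conclusion intuitive, but extracting a quantitative ratio needs careful bookkeeping with the explicit formula for $E_\omega^x T$, the block-width bound from $B_1(n)$, and the deep-block isolation property from $E(n,a)$.
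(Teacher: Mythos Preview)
The paper does not give a self-contained proof of this lemma; it defers entirely to Lemma~5.9 in \cite{PZ} and Corollary~2.3.2 in \cite{Ko}. Your Khas'minskii strategy is therefore not something one can compare line by line to the present paper, but it is a sound and rather clean route: the reduction of the whole family of $j$-th moment inequalities to a single exponential-moment bound is correct, and the standard inductive bound $E^x[\tau^j]\le j!\,M^j$ with $M=\sup_y E^y[\tau]$ yields precisely $3\cdot j!\cdot 2^j\,(E_\omega^{\nu_i}\widetilde T_{\nu_{i+1}}^{(n)})^j$ as soon as $M\le 2\,E_\omega^{\nu_i}\widetilde T_{\nu_{i+1}}^{(n)}$, which is the estimate you correctly single out as the crux. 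The argument in \cite{PZ} proceeds instead via a finer excursion decomposition of the crossing time (ultimately showing it is close to exponential), so your approach is genuinely more elementary for this particular statement.

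Two points in the bookkeeping need adjustment. First, the restricted walk $\widetilde X^{(n)}$ backtracks up to $\lceil(\ln n)^2\rceil$ \emph{blocks}, hence up to order $(\ln n)^4$ \emph{positions} on $B_1(n)$; the isolation event $E(n,a)$ of Lemma~\ref{PL7} only forbids two large increases within a window of $(\ln n)^2$ positions, so it does not directly control the expected return time from the leftmost accessible state. You need a widened version of Lemma~\ref{PL6} with window $(\ln n)^4$; the proof carries over with an extra polylog factor and still gives polynomial decay. Second, your claim $\textbf{P}(A_n)=O(n^{-2})$ is too optimistic: once the appropriate isolation event is incorporated, the complement of the good set is only $O(n^{-\eta'})$ for some $\eta'>0$ depending on $\varepsilon$ (a union bound over the at most $n$ candidate blocks and the $(\ln n)^2$ neighbouring blocks, each pair contributing roughly $n^{-(1-\varepsilon)-a}$ by \eqref{PL4.02}, gives order $n^{\varepsilon-a}(\ln n)^2$, so one needs $\varepsilon<a<1-\varepsilon$, available since $\varepsilon<\tfrac13$). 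This weaker bound is exactly what the lemma asserts. The invocation of Lemma~\ref{PL4} to count deep blocks is not actually needed in your scheme and can be dropped.
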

 For a proof see Lemma 5.9 in \cite{PZ} and Corollary 2.3.2 in \cite{Ko}.
\begin{lem}\label{BL3}
	Let $(a_n)_{n\in\N}$ be a (environment depending) subsequence of $n_k =2^{2^k}$ of blocks which fulfil the following lower bound for the height of block $(a_{n-1})_{n\in\N}$
	\begin{align}
		H_{a_{n-1}}\ >\ \frac{1-\varepsilon}\kappa \ln(\nu_{a_n})\label{BL3.0.1}
	\end{align}
	for $\varepsilon < \frac13$. Then, the sequence
	\begin{align*}
			\left(\left(\frac{\widetilde{T}_{\nu_{a_{n}}}^{(a_{n})} - \widetilde{T}_{{\nu_{a_{n}-1}}}^{(a_{n})}}{E_{\omega}^{\nu_{a_{n}-1}} \widetilde{T}_{\nu_{a_{n}}}^{(a_{n})}}\right)^2\right)_{n\in\N}
	\end{align*}
	is uniformly integrable with respect to $P_\omega$ for $\textbf{P}$-almost every environment $\omega$.
\end{lem}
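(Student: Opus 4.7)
Set $Z_n := (\widetilde{T}_{\nu_{a_n}}^{(a_n)} - \widetilde{T}_{\nu_{a_n-1}}^{(a_n)})/E_\omega^{\nu_{a_n-1}}\widetilde{T}_{\nu_{a_n}}^{(a_n)}$. The plan is to reduce uniform integrability of $(Z_n^2)$ to a uniform bound on a higher moment $E_\omega Z_n^p$ for some $p>2$, and then to extract that moment bound from Lemma \ref{BL2.1}. Since the construction of $\widetilde{X}^{(a_n)}$ is Markovian (the reflection point is a deterministic function of the current rightmost block visited), the strong Markov property at the first hitting time of $\nu_{a_n-1}$ identifies the law of the numerator of $Z_n$ with that of $\widetilde{T}_{\nu_{a_n}}^{(a_n)}$ under $P_\omega^{\nu_{a_n-1}}$. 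Hence
\begin{align*}
E_\omega Z_n^j\ =\ \frac{E_\omega^{\nu_{a_n-1}}\!\left(\widetilde{T}_{\nu_{a_n}}^{(a_n)}\right)^{\!j}}{\left(E_\omega^{\nu_{a_n-1}}\widetilde{T}_{\nu_{a_n}}^{(a_n)}\right)^{\!j}} \qquad \text{for every } j\in\N.
\end{align*}

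Next I would invoke Lemma \ref{BL2.1} with its parameter $n$ replaced by $a_n$. Since $a_n\in\{2^{2^k}:k\in\N\}$, the bound $\textbf{P}(A_n)=o(n^{-\eta})$ gives $\sum_n \textbf{P}(A_{a_n})\le \sum_k \textbf{P}(A_{2^{2^k}})\le \sum_k 2^{-\eta\,2^k}<\infty$. The Borel--Cantelli lemma then yields, for $\textbf{P}$-almost every environment $\omega$, some $N(\omega)$ with $\omega\notin A_{a_n}$ for all $n\ge N(\omega)$.

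On the complement $A_{a_n}^{\mathrm{c}}$, every block $i\le a_n$ whose height exceeds the threshold $\frac{1-\varepsilon}{\kappa}\ln a_n$ automatically satisfies the moment inequality $E_\omega^{\nu_i}(\widetilde{T}_{\nu_{i+1}}^{(a_n)})^j \le 3\,j!\,2^j\,(E_\omega^{\nu_i}\widetilde{T}_{\nu_{i+1}}^{(a_n)})^j$ for every $j\in\N$. The hypothesis \eqref{BL3.0.1}, together with the trivial observation $\nu_{a_n}\ge a_n$ (whence $\ln\nu_{a_n}\ge\ln a_n$), forces the relevant block height to exceed this threshold, so the moment bound applies. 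Substituting into the strong-Markov identity gives $E_\omega Z_n^j\le 3\,j!\,2^j$ uniformly in $n\ge N(\omega)$ and $j\in\N$. In particular $\sup_n E_\omega Z_n^4<\infty$, and by the de la Vallée-Poussin criterion (applied with $\varphi(t)=t^2$) the sequence $(Z_n^2)_{n\in\N}$ is uniformly integrable under $P_\omega$.

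The only point that needs care is the matching of the height threshold in \eqref{BL3.0.1} with the one appearing in Lemma \ref{BL2.1}: the superscript $a_n$ of the restricted walk dictates the threshold $\frac{1-\varepsilon}{\kappa}\ln a_n$, while the hypothesis is phrased with $\ln\nu_{a_n}$. Since $\nu_{a_n}\ge a_n$ the inequality goes in the favourable direction, so this is really just bookkeeping. Everything else is a standard combination of strong Markov, Borel--Cantelli along the doubly-exponential subsequence, and the elementary passage from a uniform $L^p$ bound to uniform integrability.
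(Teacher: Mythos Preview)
Your proposal is correct and follows essentially the same path as the paper: identify the law of the increment via the strong Markov property, apply Lemma~\ref{BL2.1} along the doubly-exponential subsequence together with Borel--Cantelli, and conclude uniform integrability from the resulting moment control. The only difference is cosmetic: the paper sums the moment bounds $E_\omega Z_n^j\le 3\,j!\,2^j$ into an exponential moment, derives an explicit exponential tail $P_\omega(Z_n>x)\le 6e^{-x/4}$, and then integrates directly to show the tail of $E_\omega[Z_n^2\,\1{Z_n^2>c}]$ vanishes, whereas you short-circuit this by invoking de~la~Vall\'ee-Poussin with the single bound $\sup_n E_\omega Z_n^4<\infty$. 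Your route is slightly cleaner; the paper's route gives an explicit quantitative decay rate but is not needed for the lemma as stated. One small point you glide over: the supremum over \emph{all} $n$ (not just $n\ge N(\omega)$) requires noting that each of the finitely many initial $Z_n$ has finite fourth moment, which holds because $\widetilde{X}^{(a_n)}$ started at $\nu_{a_n-1}$ lives on a finite interval and hence $\widetilde{T}_{\nu_{a_n}}^{(a_n)}$ has geometric tails; the paper makes this split explicit in its final display.
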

\begin{proof}
At first, we note that the distribution of $\widetilde{T}_{\nu_{a_{n}}}^{(a_{n})} - \widetilde{T}_{{\nu_{a_{n}-1}}}^{(a_{n})}$ under $P_\omega$ is the same as the distribution of $\widetilde{T}_{\nu_{a_{n}}}^{(a_{n})}$ under $P_\omega^{\nu_{a_n-1}}$. We have for all blocks $a_n$ and all $c>0$
\begin{align}
	&\frac{1}{\left(E_\omega^{\nu_{a_n-1}}\widetilde{T}_{\nu_{a_n}}^{(a_n)}\right)^2} \int\limits_{\left\{\widetilde{T}_{\nu_{a_n}}^{(a_n)}\ >\ \sqrt{c}E_\omega^{\nu_{a_n-1}}\widetilde{T}_{\nu_{a_n}}^{(a_n)}\right\}}\left(\widetilde{T}_{\nu_{a_n}}^{(a_n)}\right)^2 dP_\omega^{\nu_{a_n-1}} \nonumber\\
	=\ & \frac{c \left(E_\omega^{\nu_{a_n-1}}\widetilde{T}_{\nu_{a_n}}^{(a_n)}\right)^2 P_\omega^{\nu_{a_n-1}}\left(\widetilde{T}_{\nu_{a_n}}^{(a_n)}\ >\ \sqrt{c}E_\omega^{\nu_{a_n-1}}\widetilde{T}_{\nu_{a_n}}^{(a_n)}\right)}{\left(E_\omega^{\nu_{a_n-1}}\widetilde{T}_{\nu_{a_n}}^{(a_n)}\right)^2} \nonumber\\
	 & + \frac{1}{\left(E_\omega^{\nu_{a_n-1}}\widetilde{T}_{\nu_{a_n}}^{(a_n)}\right)^2} \int\limits_{\sqrt{c}E_\omega^{\nu_{a_n-1}}\widetilde{T}_{\nu_{a_n}}^{(a_n)}}^{\infty}2t P_\omega^{\nu_{a_n-1}}\left(\widetilde{T}_{\nu_{a_n}}^{(a_n)}\ >\ t\right) dt \nonumber\\
	 =\ & c P_\omega^{\nu_{a_n-1}}\left(\widetilde{T}_{\nu_{a_n}}^{(a_n)}\ >\ \sqrt{c}E_\omega^{\nu_{a_n-1}}\widetilde{T}_{\nu_{a_n}}^{(a_n)}\right) + \int\limits_{\sqrt{c}}^{\infty}2 x P_\omega^{\nu_{a_n-1}}\left(\widetilde{T}_{\nu_{a_n}}^{(a_n)}\ >\ xE_\omega^{\nu_{a_n-1}}\widetilde{T}_{\nu_{a_n}}^{(a_n)}\right) dx. \label{BL3.1} \vphantom{\frac{1}{\left(E_\omega^{\nu_{a_n-1}}\widetilde{T}_{\nu_{a_n}}^{(a_n)}\right)^2}}
\end{align}
Due to \eqref{BL3.0.1}, Lemma \ref{BL2.1} and the fact that $(a_n)_{n\in\N}$ is growing at least as fast as $2^{2^n}$, the Borel-Cantelli lemma yields that for $\textbf{P}$-almost every environment $\omega$ there exists a $N=N(\omega)$ such that we have 
\begin{align*}
	E_\omega^{\nu_{a_{n}-1}}\left(\widetilde{T}_{\nu_{a_{n}}}^{(a_{n})}\right)^j\ \le\ 3j! 2^j \left(E_\omega^{\nu_{a_{n}-1}}\widetilde{T}_{\nu_{a_{n}}}^{(a_{n})}\right)^j
\end{align*}
for all $j\in \N$ and all $n\ge N$. We have $\widetilde{T}_{\nu_{a_n}}^{(a_n)} \ge 0$ by definition, and therefore we obtain for $n\ge N$ and $\textbf{P}$-almost every environment $\omega$ 
\begin{align*}
	E_\omega^{\nu_{a_{n}-1}} \exp\left(\frac{\widetilde{T}_{\nu_{a_{n}}}^{(a_{n})}}{4E_\omega^{\nu_{a_{n}-1}}\widetilde{T}_{\nu_{a_{n}}}^{(a_{n})}}\right)\ =\ &\sum_{j=0}^{\infty} \frac{1}{j!} \frac{E_\omega^{\nu_{a_{n}-1}}\left(\widetilde{T}_{\nu_{a_{n}}}^{(a_{n})}\right)^j}{4^j \left(E_\omega^{\nu_{a_{n}-1}}\widetilde{T}_{\nu_{a_{n}}}^{(a_{n})}\right)^j} \
	\le\  \sum_{j=0}^{\infty} \frac3{2^j}\ =\ 6.
\end{align*}
Then, applying Markov inequality yields the following upper bound for the probabilities in \eqref{BL3.1} for $n\ge N$ and $\textbf{P}$-almost every environment $\omega$
\begin{align*}
	P_\omega^{\nu_{a_{n}-1}}\left(\widetilde{T}_{\nu_{a_{n}}}^{(a_{n})}\ >\ xE_\omega\widetilde{T}_{\nu_{a_{n}}}^{(a_{n})}\right)\ \le\ 6\exp\left(-\frac{x}4\right), \hspace{25pt} x\ge 0.
\end{align*}
Thus, for all $n\ge N$ and $\textbf{P}$-almost every environment $\omega$ we get as an upper bound for \eqref{BL3.1} 
\begin{align*}
	&\frac{1}{\left(E_\omega^{\nu_{a_{n}-1}}\widetilde{T}_{\nu_{a_{n}}}^{(a_{n})}\right)^2} \int\limits_{\left\{\widetilde{T}_{\nu_{a_{n}}}^{(a_{n})}\ >\ \sqrt{c}E_\omega^{\nu_{a_{n}-1}}\widetilde{T}_{\nu_{a_{n}}}^{(a_{n})}\right\}}\left(\widetilde{T}_{\nu_{a_{n}}}^{(a_{n})}\right)^2 dP_\omega^{\nu_{a_{n}-1}} \\
	\le\  &6c\exp\left(-\frac{\sqrt{c}}{4}\right) + 12 \int_{\sqrt{c}}^{\infty}x \exp\left(-\frac{x}{4}\right)dx \
	=\ 6\exp\left(-\frac{\sqrt{c}}{4}\right) \left(c + 8\sqrt{c}+ 32\right).
\end{align*}
Therefore, we conclude that we have for $\textbf{P}$-almost every environment $\omega$
\begin{align*}
	&\lim_{c\to\infty} \sup_{n\in\N}  \int\limits_{\left\{\widetilde{T}_{\nu_{a_{n}}}^{(a_{n})}\ >\ \sqrt{c}E_\omega^{\nu_{a_{n}-1}}\widetilde{T}_{\nu_{a_{n}}}^{(a_{n})}\right\}}\left(\frac{\widetilde{T}_{\nu_{a_{n}}}^{(a_{n})}}{E_\omega^{\nu_{a_{n}-1}}\widetilde{T}_{\nu_{a_{n}}}^{(a_{n})}}\right)^2 dP_\omega^{\nu_{a_{n}-1}} \displaybreak[0]\\
	=\ & \max_{n\in\{1,...,N-1\}} \lim_{c\to\infty} \int\limits_{\left\{\widetilde{T}_{\nu_{a_{n}}}^{(a_{n})}\ >\ \sqrt{c}E_\omega^{\nu_{a_{n}-1}}\widetilde{T}_{\nu_{a_{n}}}^{(a_{n})}\right\}}\left(\frac{\widetilde{T}_{\nu_{a_{n}}}^{(a_{n})}}{E_\omega^{\nu_{a_{n}-1}}\widetilde{T}_{\nu_{a_{n}}}^{(a_{n})}}\right)^2 dP_\omega^{\nu_{a_{n}-1}} \\
	&+ \lim\limits_{c\to\infty} \sup_{n\ge N}  \int\limits_{\left\{\widetilde{T}_{\nu_{a_{n}}}^{(a_{n})}\ >\ \sqrt{c}E_\omega^{\nu_{a_{n}-1}}\widetilde{T}_{\nu_{a_{n}}}^{(a_{n})}\right\}}\left(\frac{\widetilde{T}_{\nu_{a_{n}}}^{(a_{n})}}{E_\omega^{\nu_{a_{n}-1}}\widetilde{T}_{\nu_{a_{n}}}^{(a_{n})}}\right)^2 dP_\omega^{\nu_{a_{n}-1}} \
	=\ 0 .
\end{align*}
\end{proof}
\section{Cutoff and mixing times}\label{cutoff}
In this section, we show that a sequence of transient lazy RWRE on $(\{0,...,n\})_{n\in\N}$ exhibits a cutoff under Assumptions 1 and 2 for $\kappa>1$, but there is no cutoff for $\kappa<1$, if we additionally assume Assumption 3. Further, we prove that the mixing time is roughly of order $n^{\frac1\kappa}$ for $\kappa <1$ and roughly of order n for $\kappa\ge 1$.
\vspace{11pt}\\
At first, one easily checks that the reversible (and hence stationary) probability distribution of the RWRE $(X_n)_{n\in\N_0}$ under $P_{\omega^n}$ is given by 
\begin{align*}
	\pi_{\omega^n}(0)&:=\ \frac{\exp(-V(1))}{C_n}\ , \\
	\pi_{\omega^n}(x)&:=\ \frac{\exp(-V(x+1)) + \exp(-V(x))}{C_n}\hspace{20pt}\text{ for } x=1,...,n-1,\\
		\pi_{\omega^n}(n)&:=\ \frac{\exp(-V(n))}{C_n}\ ,
\end{align*}
where $C_n:=\  2\sum_{x=1}^n \exp\big(-V(x)\big)$ is the normalizing constant. \vspace{11pt}\\
To show a cutoff or to prove that no cutoff is possible, we need to understand very precisely which events have a high probability with respect to the stationary  distribution. In this section, we show that the mass of the stationary  distribution of the RWRE under $P_{\omega^n}$ is asymptotically concentrated on the interval $[n - 2(\ln n)^2,n]$.
\begin{lem}\label{CL1}
	We have\ \ $\lim_{n\to\infty}\pi_{\omega^n}\left(\left[n - 2(\ln n)^2,n \right]\right) = 1$ for $\textbf{P}$-almost every environment $\omega$.
\end{lem}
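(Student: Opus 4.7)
The approach will be to exploit the explicit form of the stationary distribution $\pi_{\omega^n}$, which weights state $x$ by $\exp(-V(x))+\exp(-V(x+1))$, normalized by $C_n$. Since $V$ is a random walk with negative drift by Assumption 1, these weights are largest near $n$. My goal is to show that the total weight coming from positions more than $2(\ln n)^2$ to the left of $n$ is negligible compared with the single contribution from the endpoint $n$.

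The key tool will be the ``no plateau'' event $B_1(n)$ from Lemma \ref{PL10}. For $\omega\in B_1(n)$, applying the defining property of $B_1(n)$ with $k=2$, $i=x\in\{0,\ldots,n-\lceil 2(\ln n)^2\rceil\}$, and $j=n$ immediately yields $V(n)\le V(x)-2\ln n$, hence
\[
\exp(-V(x))\ \le\ \exp(-V(n))/n^{2}
\]
for every such $x$. Summing the numerators of $\pi_{\omega^n}(x)$ over $x\le n-\lceil 2(\ln n)^2\rceil-1$ then gives an upper bound of order $n\cdot\exp(-V(n))/n^{2}=\exp(-V(n))/n$, while the denominator $C_n$ is bounded below by $2\exp(-V(n))$ just from the single term at $x=n$. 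Consequently, on $B_1(n)$,
\[
\pi_{\omega^n}\left([0,n-\lceil 2(\ln n)^{2}\rceil-1]\right)\ =\ O(1/n),
\]
which is exactly the complement estimate we need.

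To upgrade this to a statement holding for $\textbf{P}$-almost every environment, I will use the quantitative bound $\textbf{P}(B_1(n)^{\mathrm{c}})=O(n^{-2})$ from Lemma \ref{PL10} together with the Borel--Cantelli lemma, which ensures that $\omega\in B_1(n)$ eventually for almost every $\omega$. Taking $n\to\infty$ then gives $\pi_{\omega^n}([n-2(\ln n)^{2},n])\to 1$ almost surely.

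I do not expect a genuine obstacle here: the hard work has already been absorbed into the fluctuation estimates of Section \ref{Potential}, and only $B_1(n)$ is needed. The only minor care points are handling the boundary terms $\pi_{\omega^n}(0)$ and $\pi_{\omega^n}(n)$ separately from the interior formula, and matching the integer part $\lceil 2(\ln n)^{2}\rceil$ to the unrounded threshold $2(\ln n)^{2}$ in the statement; both adjustments change the estimate only by an additive $O(1/n)$ and are harmless in the limit.
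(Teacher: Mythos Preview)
Your approach is correct and essentially identical to the paper's own proof: both bound the complement $\pi_{\omega^n}([0,n-2(\ln n)^2))$ by comparing the numerator sum $\sum_{x}\exp(-V(x))$ against the single denominator term $2\exp(-V(n))$, invoke the defining inequality of $B_1(n)$ with $k=2$ to get $V(n)\le V(x)-2\ln n$ for all relevant $x$, and then apply Lemma~\ref{PL10} with Borel--Cantelli. Your treatment is in fact slightly cleaner than the paper's, which separates out the boundary contribution $\exp(V(n)-V(1))$ as an extra term rather than absorbing it into the same $B_1(n)$ estimate.
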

\begin{proof}
	We get
\begin{align}
	\pi_{\omega^n}\left([0,n - 2(\ln n)^2 )\right) \vphantom{\frac{\sum\limits}{\sum\limits_i}}\
	 =\ &\frac{\sum\limits_{x=1}^{\left\lfloor n - 2(\ln n)^2\right\rfloor}\left(\exp\big(-V(x+1)\big) + \exp\big(-V(x)\big)\right)+\exp\big(-V(1)\big)}{2\sum\limits_{x=1}^{n}\exp\big(-V(x)\big)} \nonumber\displaybreak[0]\\
	\le\ &\frac{2n\exp\left(-\min\limits_{0\le i\le\left\lfloor n - 2(\ln n)^2\right\rfloor}V(i)\right)+\exp(-V(1))}{2\exp\big(-V(n)\big)} \nonumber\\
	\leq\ &n\exp\left(V(n) -\min_{0\le i\le\left\lfloor n - 2(\ln n)^2\right\rfloor}V(i)\right) + \exp\big(V(n)-V(1)\big).  \vphantom{\frac{\sum\limits_i^k}{\sum\limits_i^k}}\label{CL1.1}
\end{align}
By the definition of $B_1(n)$, we know that for all $i \le n - 2(\ln n)^2$ we have
$
	V(n) - V(i)\ <\ -2\ln n. 
$
For environments $\omega \in B_1(n)$, this therefore yields as an upper bound for \eqref{CL1.1} 
\begin{align*}
	\pi_{\omega^n}\Big([0,n - 2(\ln n)^2 )\Big)\ &\le\ 2n\exp\big(-2\ln n\big)+\exp\big(V(n)-V(1)\big)\
	 =\ \frac2n+\exp\big(V(n)-V(1)\big)
\end{align*}
which shows that for $\omega \in B_1(n)$ we have 
\begin{align*}
	\lim_{n\to\infty}\pi_{\omega^n}\Big(\left[n - 2(\ln n)^2,n \right]\Big)\ =\ 1.
\end{align*}
Lemma \ref{PL10} and the Borel-Cantelli lemma now finish the proof.
\end{proof} 
Note here that with respect to $P_{\omega^n}$ the lazy random walk $(Y_k)_{k\in\N_0}$ has the same stationary distribution $\pi_{\omega^n}$ as $(X_k)_{k\in\N_0}$.
\vspace{11pt}\\
Next, we show that we can bound the distance in total variation of the lazy RWRE to its stationary distribution by using hitting times.
\begin{lem}\label{CL2}
We have for all $n,k\in\N$ 
	 \begin{align*}
	 	 \max_{x\in\{0,...,n\}} \big\Vert P_{\omega^n}^x(Y_k\in \cdot) - \pi_{\omega^n}\big\Vert_{TV}\ \le\ &P_{\omega^n}\left(T_n^{Y}  > k\right).
	 \end{align*}
\end{lem}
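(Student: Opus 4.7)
The plan is to use the coupling inequality together with a monotone grand coupling of the birth-and-death chain $(Y_k^n)$. Fix the environment $\omega$ and $n$. On a single probability space, I would construct a family $\{(Y_k^{(z)})_{k\in\N_0}:z\in\{0,\ldots,n\}\}$ of copies of the lazy RWRE, each started from its own $z$, together with an independent copy $(\widetilde{Y}_k)_{k\in\N_0}$ started from $\pi_{\omega^n}$, all driven by the transition kernel $P_{\omega^n}$ and coupled so that $z\mapsto Y_k^{(z)}$ is nondecreasing and $Y_k^{(0)}\le\widetilde{Y}_k$ for every $k$. Such a monotone grand coupling exists because the kernel $P_{\omega^n}$ is stochastically monotone; concretely, at each step $k$ I draw a single $U_k$ uniform on $(0,1)$ and send any chain currently at $i$ to $i-1$ if $U_k\le(1-\omega_i^n)/2$, to $i+1$ if $U_k\ge 1-\omega_i^n/2$, and leave it at $i$ otherwise.

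Let $\tau:=\inf\{k\in\N_0:Y_k^{(0)}=n\}$. Since each $Y_k^{(z)}$ and $\widetilde{Y}_k$ dominates $Y_k^{(0)}$ pointwise and is at most $n$, we have $Y_\tau^{(z)}=\widetilde{Y}_\tau=n$ for all $z\in\{0,\ldots,n\}$. After time $\tau$, all chains sit at the same state and, being driven by the common $(U_k)$, remain equal forever; in particular $Y_k^{(x)}=\widetilde{Y}_k$ for every $k\ge\tau$ and every $x$. Since $\widetilde{Y}_k$ has law $\pi_{\omega^n}$ for each $k$, the coupling inequality now yields, for every starting point $x\in\{0,\ldots,n\}$,
\begin{align*}
\big\Vert P_{\omega^n}^x(Y_k\in\cdot)-\pi_{\omega^n}\big\Vert_{TV}\ \le\ P\big(Y_k^{(x)}\ne\widetilde{Y}_k\big)\ \le\ P(\tau>k)\ =\ P_{\omega^n}(T_n^Y>k),
\end{align*}
where the last equality uses that by construction $(Y_k^{(0)})$ has the law of the lazy RWRE started at $0$ and that the paper writes $P_{\omega^n}$ for $P_{\omega^n}^0$. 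Taking the maximum over $x$ on the left gives the stated inequality.

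The main obstacle is to verify that the single-$U_k$ update rule actually preserves the monotone order between two coupled chains sitting at \emph{adjacent} positions $i<i+1$: nonadjacent configurations are automatically safe since the chains move by at most one step per update. The only potential conflict is that the lower chain jumps up to $i+1$ while the upper chain jumps down to $i$, which would require $U_k\ge 1-\omega_i^n/2$ and simultaneously $U_k\le(1-\omega_{i+1}^n)/2$; this forces $\omega_i^n-\omega_{i+1}^n\ge 1$, which is impossible since $\omega_i^n,\omega_{i+1}^n\in[0,1]$, with strict inequalities in the interior and the only boundary pairs $(\omega_0^n,\omega_1^n)=(1,\omega_1)$ and $(\omega_{n-1}^n,\omega_n^n)=(\omega_{n-1},0)$ also avoiding this extreme. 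Once monotonicity is established, the remainder of the proof is mechanical.
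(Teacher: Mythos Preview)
Your proof is correct and follows the same overall strategy as the paper: couple the chains so that order is preserved, and observe that all trajectories coalesce by the time the chain started at $0$ hits $n$. The implementation, however, differs. The paper constructs a \emph{pairwise} coupling in which at each step a fair coin decides which of the two chains performs a non-lazy move while the other stays put; since only one chain moves at a time, the two cannot cross without meeting, and the paper then invokes Corollary~5.3 of \cite{LPW} to pass from $d_n$ to the worst-pair meeting time. Your monotone \emph{grand} coupling driven by a single uniform is arguably cleaner: it handles the maximum over starting points in one stroke, avoids the external reference, and couples directly with a stationary copy. Conversely, the paper's one-moves-at-a-time device makes order preservation automatic, with no case analysis on adjacent sites required.

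One minor point in your final paragraph: the equality $\omega_i^n-\omega_{i+1}^n=1$ can actually occur, namely when $n=1$ and $(i,i+1)=(0,1)$, but then the crossing event reduces to $\{U_k=\tfrac12\}$, which has probability zero. For $n\ge 2$ the boundary pairs give a strict inequality as you argue.
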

\begin{proof}
	We have for all $k\in\N$ using Corollary 5.3 in \cite{LPW}:
\begin{align}
 \max_{x\in\{0,...,n\}} \big\Vert P_{\omega^n}^x(Y_k\in \cdot) - \pi_{\omega^n}\big\Vert_{TV} \ 
	\le\  &\max_{x,y\in\{0,...,n\}} P_{\omega^n}^{\vec{z}} \Big(\min\{s\in\N : Y_s^{x}=Y_s^{y}\}\ >\ k\Big), \label{CL2.1}
\end{align}
where for all $x,y\in\{0,...,n\}$ under $P_{\omega^n}^{\vec{z}}$ we consider a coupling  $\left(Y_k^{x},Y_k^{y}\right)_{k\in\N_0}$ of two lazy RWRE on $\{0,...,n\}$ with $P_{\omega^n}^{\vec{z}}\left(Y_0^{x}=x, Y_0^{y}=y\right) = 1$ and marginal distribution $P_{\omega^n}^x$ and $P_{\omega^n}^y$, respectively, defined in the following way: until the two chains meet for the first time, the chains move according to the following two steps: first, we toss a coin to decide which chain moves. After that, the chosen chain performs a step of a RWRE in the environment ${\omega^n}$ described in \eqref{RWRE} and the other chain stays at its position. After they met for the first time, we move them together according to the law of a lazy RWRE. Further, note that due to this coupling the two chains cannot cross each other without meeting, and therefore we have
\begin{align*}
	\max_{x,y\in\{0,...,n\}} P_{\omega^n}^{\vec{z}} \left(\min\{s\in\N : X_s^{x}=X_s^{y}\}\ >\ t\right)\ \le\  P_{\omega^n}\left(T_n^{Y}  > t\right),
\end{align*}
which together with \eqref{CL2.1} yields the statement.
\end{proof}
\begin{proof}[Proof of Theorem \ref{CT2}]
First, we note that due to \eqref{B1} and Theorem \ref{BT1} we have for $\kappa >1$ and $\textbf{P}$-almost every environment $\omega$
\begin{align}
	\sqrt{\textnormal{Var}_{\omega^n}(T_n)}\ \le\ \sqrt{\textnormal{Var}_{\omega}(T_n)}\ =\ o(n). \label{CT2.0.1}
\end{align}
Therefore (1) in Definition \ref{CD2} is valid for $$t_\omega(n):= 2E_{\omega^n}(T_n) \hspace{15pt} \text{ and } \hspace{15pt}   f_\omega(n):=\sqrt{\textnormal{Var}_{\omega^n}(T_n)}$$
because we have for $\textbf{P}$-almost every environment $\omega$ due to \eqref{Ergodensatz} $$1\ \le\ \lim_{n\to\infty} \frac{E_{\omega^n}(T_n)}n\ \le\ \lim_{n\to\infty} \frac{E_{\omega}(T_n)}n\ =\ \mathbb{E} T_1.$$ 
Further, we define $$t_\omega^+(c,n):=\ t_\omega(n) + c\cdot f_\omega(n).$$ Then, using Lemma \ref{Expectation lRWRE}, Lemma \ref{CL2} and Chebyshev's inequality, we get 
\begin{align}
	 d_n\left(t_\omega^+(c,n)\right)\ \le\ P_{\omega^n}\left(T_n^{Y}  > t_\omega^+(c,n)\right)\vphantom{\frac12} \displaybreak[0]\
	\le\ & P_{\omega^n}\left(\Big|T_n^{Y}  - E_{\omega^n} T_n^Y \Big|> c \sqrt{\textnormal{Var}_{\omega^n}(T_n)}\right)\vphantom{\frac12}\\
	\le\ & \frac1{c^2} \frac{\textnormal{Var}_{\omega^n}(T_n^Y)}{\textnormal{Var}_{\omega^n}(T_n)}.\label{CT2.3.1}
\end{align}
Therefore, Theorem \ref{BT1} together with \eqref{Expectation lRWRE2} and \eqref{BT1.8} yields for $\textbf{P}$-almost every environment $\omega$
\begin{align*}
	\lim_{c\to\infty}\limsup_{n\to\infty} d_n\left(t_\omega^+(c,n)\right)\ \le\ \lim_{c\to\infty} \frac{4+O(1)}{c^2} \ =\ 0,
\end{align*}
and thus we showed (3) in Definition \ref{CD2}.\vspace{11pt}\\
As the last step, we are interested in the lower bound of the cutoff (cf.~(2) in Definition \ref{CD2}). The idea is to show that before the cutoff window the lazy RWRE with start in $0$ has with high probability not reached position $\lceil n-2(\ln n)^2\rceil$ and therefore is still in the interval $[0,n - 2(\ln n)^2)$. On the other hand, the mass of the stationary distribution $\pi_{\omega^n}$ is for large $n$ concentrated on the interval $[n-2(\ln n)^2,n]$ due to Lemma \ref{CL1}. We define 
\begin{align*}
	t^-_\omega(c,n) :=\ t_\omega(n) - c\cdot f_\omega(n)\ \text{ and } \ a_n:=\ n - \left\lceil 2(\ln n)^2\right\rceil. 
\end{align*} 
Then, we get for $c$ and $n$ large enough using Lemma \ref{Expectation lRWRE}
\begin{align}
	P_{\omega^n} \left(Y_{t_\omega^-(c,n)}\ \ge\ a_n\right)\
	\le\ & P_{\omega^n}\left(T^{Y}_{a_n} \le t_\omega^-(c,n)\right) \vphantom{\sum_i^k}\
	\le\   P_{\omega^n}\left( \left|T_{a_n}^Y- E_{\omega^n}T_{a_n}^Y\right|\ge  \frac{c}2\sqrt{\textnormal{Var}_{\omega^n}(T_n)} \right) \nonumber,
\end{align}
where to obtain the last inequality we used that we have for $\textbf{P}$-almost every environment $\omega$ due to Lemma \ref{PL9}  
$$
	c -\frac{2E_{\omega^n}^{a_n}T_n}{\sqrt{\textnormal{Var}_{\omega^n}(T_n)}} >\frac{c}2
$$
for all $n \ge n(\omega)$ and $c$ large enough. Now, again applying Chebyshev's inequality and Lemma \ref{Expectation lRWRE}, we get for c large enough 
\begin{align*}
P_{\omega^n} \left(Y_{t_\omega^-(c,n)}\ \ge\ a_n\right) \
\le\ & \frac{\textnormal{Var}_{\omega^n}(T_{a_n}^Y)}{\textnormal{Var}_{\omega^n}(T_n)}\cdot \frac4{c^2} \
=\  \frac{4\textnormal{Var}_{\omega^n}(T_{a_n})+ 2 E_{\omega^n} T_{a_n}}{\textnormal{Var}_{\omega^n}(T_n)}\cdot \frac4{c^2}.
\end{align*}
Finally, using Lemma \ref{CL1} and Theorem \ref{BT1}, we conclude that we have for $\textbf{P}$-almost every environment 
\begin{align*}
	\lim_{c\to\infty}\liminf_{n\to\infty} d_n\left(t^-_\omega(c,n)\right) \vphantom{\sum^k}\
	=\ & \lim_{c\to\infty}\liminf_{n\to\infty} \max_{x\in\{0,...,n\}}||P^x_{\omega^n}\left(Y^n_{t_\omega^-(c,n)} \in\cdot\right) - \pi_{\omega^n}||_{TV}\nonumber\vphantom{\sum_i^k} \displaybreak[0]\\
	\ge\ &\lim_{c\to\infty}\liminf_{n\to\infty}\left( \pi_{\omega^n} \left( [n - 2(\ln n)^2,n]\right) - P_{\omega^n}\left(Y_{t_\omega^-(c,n)}\ \ge\ n - 2(\ln n)^2\right)\right)\vphantom{\sum_i^k}\\
	\ge\ &1 - \lim_{c\to\infty}\limsup_{n\to\infty} \frac{4\textnormal{Var}_{\omega^n}(T_{a_n})+ 2 E_{\omega^n} T_{a_n} }{\textnormal{Var}_{\omega^n}(T_n)}\cdot \frac4{c^2}\nonumber\\
	  =\ &1 - \lim_{c\to\infty} \frac{16+O(1)}{c^2}\ =\ 1, \nonumber	\vphantom{\sum_i^k}
\end{align*}
which shows (2) in Definition \ref{CD2}.
\end{proof}
Next, we consider the case $\kappa <1$. We show that in this case the transition to stationarity is not as sharp as required for the cutoff phenomenon. For $\kappa<1$, there exists an environment depending sequence of deep blocks in which the RWRE with start in $0$ spends most of its time before it hits the endpoint of this high block (cf. Lemma \ref{L4}). Afterwards, we show that this sequence excludes that the lazy RWRE exhibits cutoff. For $k\in\N$ we define 
\begin{align}
	n_k:=\ 2^{2^k}\ \text{ and }\ d_k:=\ n_k - n_{k-1}\ =\ 3 n_{k-1}. \label{d_k}
\end{align}
%
%
\begin{lem} \label{L4}
	Let Assumptions 1-3 hold and assume $\kappa<1$. For $\textbf{P}$-almost every environment $\omega$ there exists a random subsequence $(a_m)_{m=1,2,\ldots } = a_m(\omega)_{m=1,2,\ldots }$ of $(n_k)_{k=1,2,\ldots }$ such that
	\begin{align*}
		\exp(H_{a_m-1})\ \ge\ m^2 E_\omega \widetilde{T}_{\nu_{a_m-1}}^{(a_m)}.
	\end{align*}
\end{lem}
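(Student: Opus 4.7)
The strategy is to apply the conditional (L\'evy) form of the second Borel--Cantelli lemma along the deterministic sequence $(n_k)_{k\ge 1}$. Set $\mathcal{F}_k:=\sigma(\omega_j\,:\,j<\nu_{n_k-1})$ and
\[
B_k\;:=\;E_\omega\widetilde{T}_{\nu_{n_k-1}}^{(n_k)}.
\]
Since the block heights $(H_i)_{i\ge 0}$ are i.i.d.\ functionals of disjoint slices of the i.i.d.\ environment, $B_k$ is $\mathcal{F}_k$-measurable while $H_{n_k-1}$ is independent of $\mathcal{F}_k$ and distributed as a generic block height, satisfying the two-sided tail estimate $\widetilde{C}_1 e^{-\kappa h}\le \textbf{P}(H_{n_k-1}\ge h)\le C_1 e^{-\kappa h}$ from \eqref{PL4.02}.

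For each $k$, consider the event $A_k:=\bigl\{\exp(H_{n_k-1})\ge k^2 B_k\bigr\}$. Applying the lower tail bound conditionally on $\mathcal{F}_k$ (on which $B_k$ is deterministic),
\[
\textbf{P}(A_k\mid\mathcal{F}_k)\;\ge\;\widetilde{C}_1\,k^{-2\kappa}\,B_k^{-\kappa}.
\]
The plan is to show that $\sum_{k\ge 1}\textbf{P}(A_k\mid\mathcal{F}_k)=\infty$ $\textbf{P}$-almost surely. Granted this, conditional second Borel--Cantelli yields that $A_k$ occurs infinitely often $\textbf{P}$-a.s.; enumerating the successful indices as $k_1<k_2<\cdots$ and setting $a_m:=n_{k_m}$, the bound $k_m\ge m$ gives $k_m^2\ge m^2$ and hence
\[
\exp(H_{a_m-1})\;\ge\;k_m^2\,B_{k_m}\;\ge\;m^2\,E_\omega\widetilde{T}_{\nu_{a_m-1}}^{(a_m)},
\]
which is precisely the required conclusion. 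The $a_m$ are then a random subsequence of $(n_k)$ as requested.

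The main obstacle is the $\textbf{P}$-almost sure divergence of $\sum_k k^{-2\kappa}B_k^{-\kappa}$. To control $B_k$ quantitatively, I would use the coupling $\widetilde{X}_k^{(n_k)}\ge X_k$ to reduce to $E_\omega T_{\nu_{n_k-1}}$ and then invoke Lemma~\ref{BL1} together with the identity \eqref{Expectation} and the calculation in \eqref{BT1.10} to write, on the ``good'' environment set of Lemma~\ref{PL5}, $B_k$ as $\sum_{i<n_k-1}\exp(H_i)$ up to a polylogarithmic factor in $n_k$. Because $\kappa<1$, this sum is driven by $\max_{i<n_k-1}\exp(H_i)$, and the delicate part of the proof is to combine this structure with the independence of $H_{n_k-1}$ from $\mathcal{F}_k$ so as to obtain the required divergence, absorbing the polylogarithmic losses into a harmless $o(1)$ exponent. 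Once the divergence is in hand, the rest of the argument is purely book-keeping: the monotonicity $k_m\ge m$ is automatic, and no further control of the fluctuations of $B_k$ or $H_{n_k-1}$ is needed to close the proof.
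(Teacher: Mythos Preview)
Your conditional Borel--Cantelli setup cannot succeed as written. The lower bound $\textbf{P}(A_k\mid\mathcal{F}_k)\ge \widetilde{C}_1 k^{-2\kappa}B_k^{-\kappa}$ is correct, but it is far too small to give divergence: the walk needs at least $\nu_{n_k-1}\ge n_k-1$ steps to reach $\nu_{n_k-1}$, so $B_k\ge n_k-1$ \emph{deterministically}, and hence
\[
\sum_{k\ge 1} k^{-2\kappa}B_k^{-\kappa}\ \le\ \sum_{k\ge 1}(n_k-1)^{-\kappa}\ <\ \infty
\]
whether $n_k=2^{2^k}$ or $n_k$ grows merely geometrically. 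Worse, the matching upper tail in \eqref{PL4.02} gives $\textbf{P}(A_k\mid\mathcal{F}_k)\le C_1 k^{-2\kappa}B_k^{-\kappa}$, so the series \emph{actually} converges, and by the first Borel--Cantelli lemma your events $A_k$ occur only finitely often almost surely. Your last paragraph proposes to bound $B_k$ from above via Lemma~\ref{BL1}, but even the sharpest upper bound $B_k\le n_k^{1/\kappa+o(1)}$ from Theorem~\ref{BT1} still yields $B_k^{-\kappa}\le n_k^{-1+o(1)}$, and $\sum_k n_k^{-1+o(1)}$ converges. The core problem is structural: asking the \emph{single prescribed} block $n_k-1$ to beat an accumulated expectation of order $n_k^{1/\kappa}$ is an event of probability $\approx n_k^{-1}$, and no amount of polylogarithmic bookkeeping can rescue that.

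The argument in \cite{PZ} (to which the paper defers for this lemma) does not fix the dominating block in advance. The spacing $n_k=2^{2^k}$ is chosen so that the ``fresh'' stretch of $d_k\sim n_{k-1}^2$ blocks between $\nu_{n_{k-1}}$ and $\nu_{n_k}$ is much longer than everything before it; its maximal block height is therefore typically about twice the previous record, and with probability bounded away from zero it dominates the whole sum to its left by an arbitrarily large factor. A second Borel--Cantelli step along $k$, now with conditional probabilities bounded below, then produces infinitely many such records, and $a_m$ is taken to be one more than the record location --- an index in $(n_{k-1},n_k]$, not necessarily $n_k$ itself (the paper's phrase ``subsequence of $(n_k)$'' should be read in this looser sense; what is actually used downstream, in Lemma~\ref{BL3}, is only that $a_m$ grows at least as fast as $2^{2^m}$). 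You correctly identify that $\sum_i\exp(H_i)$ is governed by its maximum when $\kappa<1$; the missing idea is to let the environment choose where that maximum sits rather than forcing it to sit at position $n_k-1$.
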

For a proof see Corollary 4.4 in \cite{PZ}. 
\begin{proof}[Proof of Theorem \ref{CT3}]
In the following, we use the sequence of high blocks of Lemma \ref{L4} in order to construct two sequences of the same order as the mixing time with the property that along the first (smaller) one the distance to stationarity is bounded away from 1 and along the second (bigger) one bounded away from 0. 

Lemma \ref{L4} gives us for $\textbf{P}$-almost every environment $\omega$ a sequence $(j_m)_{m\in\N} = (j_m(\omega))_{m\in\N}$ which fulfils
\begin{align}
	\exp\left(H_{j_m-1}\right)\ \ge\ m^2 E_\omega\left(\widetilde{T}_{\nu_{j_m-1}}^{(j_m)}\right) \label{CT3.0.1}
\end{align}
for all $m\in\N$. Obviously, this can only be the case, if block $j_m-1$ is the highest block in the interval $[0,\nu_{j_m}]$. Therefore, for environments $\omega$ which are additionally in $F(\nu_{j_m})$ (cf.~\eqref{F} for the definition of $F(\nu_{j_m})$), we have that condition \eqref{BL2.0.1} holds. Hence, we can use Lemma \ref{BL2} to obtain that for $\omega \in F(\nu_{j_m})$ we have
\begin{align}
	\left(E_\omega^{\nu_{j_m-1}}\left(\widetilde{T}_{\nu_{j_m}}^{(j_m)}\right)\right)^2\ \le\ 4\textnormal{Var}_\omega\left(\widetilde{T}_{\nu_{j_m}}^{(j_m)}- \widetilde{T}_{\nu_{j_m-1}}^{(j_m)}\right). \label{CT3.0.2}
\end{align}
Further, we note that the laws of
\begin{align*}
\left(\frac{\widetilde{T}_{\nu_{j_m}}^{(j_m)}-\widetilde{T}_{\nu_{j_m-1}}^{(j_m)}}{E_{\omega}^{\nu_{j_m-1}}\widetilde{T}_{\nu_{j_m}}^{(j_m)}}\right)_{m\in\N}\ =\ \left(\frac{\widetilde{T}_{\nu_{j_m}}^{(j_m)}-\widetilde{T}_{\nu_{j_m-1}}^{(j_m)}}{E_{\omega}\left(\widetilde{T}_{\nu_{j_m}}^{(j_m)}-\widetilde{T}_{\nu_{j_m-1}}^{(j_m)}\right)}\right)_{m\in\N}
\end{align*}
 are tight with respect to $P_\omega$ due to the Markov inequality. Thus, Prohorov's Theorem yields that there exists a subsequence $\left(\nu_{j_{m_k}}\right)_{k\in\N}$ such that 
\begin{align}	\frac{\widetilde{T}_{\nu_{j_{m_k}}}^{(j_{m_k})}-\widetilde{T}_{\nu_{j_{m_k}-1}}^{(j_{m_k})}}{E_{\omega}^{\nu_{j_{m_k}-1}}\widetilde{T}_{\nu_{j_{m_k}}}^{(j_{m_k})}}\ \stackrel{d}{\longrightarrow}\ Z \label{CT3.1}
\end{align}
for some positive random variable Z as $k\to\infty$, where $\stackrel{d}{\rightarrow}$ means convergence in distribution. Note that $(j_{m_k})_{k\in\N}$ is a subsequence of $(n_k)_{k\in\N} = (2^{2^k})_{k\in\N}$ by construction. Therefore and due to \eqref{CT3.0.1}, assumption \eqref{BL3.0.1} of Lemma \ref{BL3} is valid for $\omega \in F(\nu_{j_{m_k}})$ (using the property of set $B_4(\nu_{j_{m_k}})$), and we get that for $\textbf{P}$-almost every environment $\omega$ the sequence 
\begin{align*}	\left(\left(\frac{\widetilde{T}_{\nu_{j_{m_{k}}}}^{(j_{m_{k}})}-\widetilde{T}_{\nu_{j_{m_{k}}-1}}^{(j_{m_{k}})}}{E_{\omega}^{\nu_{j_{m_{k}}-1}}\widetilde{T}_{\nu_{j_{m_{k}}}}^{(j_{m_{k}})}}\right)^2\right)_{k\in\N}
\end{align*}
is uniformly integrable with respect to $P_\omega$. Thus, the first two moments of $$\frac{\widetilde{T}_{\nu_{j_{m_k}}}^{(j_{m_k})}-\widetilde{T}_{\nu_{j_{m_k}-1}}^{(j_{m_k})}}{E_{\omega}^{\nu_{j_{m_k}-1}}\widetilde{T}_{\nu_{j_{m_k}}}^{(j_{m_k})}}$$ 
converge to the corresponding moments of $Z$ for $\textbf{P}$-almost every environment $\omega$ as $k\to\infty$. \\[11pt]
Further, for $\textbf{P}$-almost every environment $\omega$ we have $\omega\in F(n)$ for all $n$ large enough due to Lemma \ref{PL5}. Hence, using \eqref{CT3.0.2}, we get for $\textbf{P}$-almost every environment $\omega$
\begin{align*}
	\textnormal{Var}_\omega(Z)\ =\ &\lim_{k\to\infty}\frac1{\left(E_{\omega}^{\nu_{j_{m_k}-1}}\widetilde{T}_{\nu_{j_{m_k}}}^{(j_{m_k})}\right)^2}\textnormal{Var}_\omega\left(\widetilde{T}_{\nu_{j_{m_k}}}^{(j_{m_k})}-\widetilde{T}_{\nu_{j_{m_k}-1}}^{(j_{m_k})}\right) \ \ge\  \frac14.
\end{align*}
Therefore, for $\textbf{P}$-almost every environment $\omega$ we have that the distribution of Z with respect to $P_\omega$ cannot be the Dirac measure in 1. Since $E_\omega Z = 1$, there exists an interval $(a,b)$ with $a<1<b$, such that 
\begin{align}
	&\lim_{k\to\infty} P_{\omega}\left(\widetilde{T}_{\nu_{j_{m_k}}}^{(j_{m_k})}-\widetilde{T}_{\nu_{j_{m_k}-1}}^{(j_{m_k})}\ >\ a\cdot E_{\omega}^{\nu_{j_{m_k}-1}}\widetilde{T}_{\nu_{j_{m_k}}}^{(j_{m_k})}\right)\ <\ 1, \label{CT3.2} \\
	&\lim_{k\to\infty} P_{\omega}\left(\widetilde{T}_{\nu_{j_{m_k}}}^{(j_{m_k})}-\widetilde{T}_{\nu_{j_{m_k}-1}}^{(j_{m_k})}\ <\ b\cdot E_{\omega}^{\nu_{j_{m_k}-1}}\widetilde{T}_{\nu_{j_{m_k}}}^{(j_{m_k})}\right)\ <\ 1
\label{CT3.3}.	 
\end{align}
Using Lemma \ref{CL2}, we get (cf.~\eqref{A(n)} for the definition of $A(n)$ and cf.~\eqref{Zdef} for the definition of $(Z_k)_{k\in\N}$)
\begin{align*}
	 d_n\left(\left\lceil (a+b)\cdot E_\omega\widetilde{T}_n^{(n)}\right\rceil\right)\vphantom{\sum^k}\
	\le\ & P_{\omega^n}\left(T_n^{Y}> (a+b)\cdot E_\omega\widetilde{T}_n^{(n)}\right)\vphantom{\sum_i^k}\\
	 \le\ & P_{\widetilde{\omega}}\left(A(n)^{\textnormal{c}}\right) + P_{\omega^n}\left(\widetilde{T}_n^{(n)}\ >\ \left(a + \frac14(b-a)\right)\cdot 	
						E_\omega\widetilde{T}_n^{(n)}\right) \\ 
						& + P_{\widetilde{\omega}}\left(\sum_{i=1}^{\left\lceil (a+b)\cdot E_\omega\widetilde{T}_n^{(n)}\right\rceil}\big( 1 - Z_i\big) > \left(\frac34 b+\frac14 a\right)\cdot E_\omega\widetilde{T}_n^{(n)}\right).
\end{align*}
We note that for arbitrary $a>0$ we have $P_{\omega^n}\left(\widetilde{T}_n^{(n)} > a\right) \le P_\omega\left(\widetilde{T}_n^{(n)} > a\right)$, and therefore, we get
\begin{align*}						
\liminf_{n\to\infty} d_n\left(\left\lceil (a+b) E_\omega\widetilde{T}_n^{(n)}\right\rceil\right)
 \le \liminf_{k\to\infty}\left(P_{\widetilde{\omega}}\left(A\left(\nu_{j_{m_{k}}}\right)^{\textnormal{c}}\right)\hspace{-1pt}+\hspace{-1pt}P_\omega\left(\widetilde{T}_{\nu_{j_{m_k}}}^{(j_{m_k})} \hspace{-1pt}>\hspace{-1pt} \left(a+\frac14(b-a)\right)E_\omega\widetilde{T}_{\nu_{j_{m_k}}}^{(j_{m_k})}\right)\right) ,
\end{align*}
where we additionally used that $\frac34 b+\frac14 a > \frac{a+b}2$ and thus by Cramér's Theorem
$$P_{\widetilde{\omega}}\left(\sum_{i=1}^{\left\lceil (a+b)\cdot E_\omega\widetilde{T}_n^{(n)}\right\rceil}\big( 1- Z_i\big) > \left(\frac34 b+\frac14 a\right)\cdot E_\omega\widetilde{T}_n^{(n)}\right)\ \stackrel{n\to \infty}{\longrightarrow}\   0.$$	 
Further, we note that due to \eqref{CT3.0.1} we have 
\begin{align*}
	\frac{E_\omega\widetilde{T}_{\nu_{j_{m_k}-1}}^{(j_{m_k})}}{E_\omega\widetilde{T}_{\nu_{j_{m_k}}}^{(j_{m_k})}}\ \le\ \frac{E_\omega\widetilde{T}_{\nu_{j_{m_k}-1}}^{(j_{m_k})}}{W_{j_{m_k}-1}}\ \le \frac{E_\omega\widetilde{T}_{\nu_{j_{m_k}-1}}^{(j_{m_k})}}{\exp(H_{j_{m_k}-1})}\ \le\ \frac1{m_k^2},
\end{align*}
which together with Lemma \ref{PL1} and equation \eqref{CT3.2} yields
\begin{align} 
&\liminf_{n\to\infty} d_n\left((a+b)\cdot E_\omega\widetilde{T}_n^{(n)}\right)\vphantom{\sum^k}\nonumber\\
	 \le\hspace{2.5pt} & \liminf_{k\to\infty}\left(  P_\omega\left(\widetilde{T}_{\nu_{j_{m_k}-1}}^{(j_{m_k})} > \frac14(b-a)E_\omega\widetilde{T}_{\nu_{j_{m_k}}}^{(j_{m_k})}\right)    +P_\omega\left(\left(\widetilde{T}_{\nu_{j_{m_k}}}^{(j_{m_k})}-\widetilde{T}_{\nu_{j_{m_k}-1}}^{(j_{m_k})}\right)>a\cdot E_\omega\widetilde{T}_{\nu_{j_{m_k}}}^{(j_{m_k})}\right)   \right)\vphantom{\sum_i^k}\nonumber \displaybreak[0]\\
\le\hspace{2.5pt} & \liminf_{k\to\infty}\left( \frac{4}{b-a}\frac{E_\omega\widetilde{T}_{\nu_{j_{m_k}-1}}^{(j_{m_k})}}{E_\omega\widetilde{T}_{\nu_{j_{m_k}}}^{(j_{m_k})}}  + P_\omega\left(\left(\widetilde{T}_{\nu_{j_{m_k}}}^{(j_{m_k})}-\widetilde{T}_{\nu_{j_{m_k}-1}}^{(j_{m_k})}\right)>a\cdot E_\omega\widetilde{T}_{\nu_{j_{m_k}}}^{(j_{m_k})}\right)   \right) \nonumber\\	 
\le\hspace{2.5pt} & \liminf_{k\to\infty}\left( \frac{4}{b-a}\cdot\frac1{m_k}  + P_\omega\left(\left(\widetilde{T}_{\nu_{j_{m_k}}}^{(j_{m_k^2})}-\widetilde{T}_{\nu_{j_{m_k}-1}}^{(j_{m_k})}\right)>a\cdot E_\omega\widetilde{T}_{\nu_{j_{m_k}}}^{(j_{m_k})}\right)   \right) \
						 <\hspace{2.5pt}  1. \vphantom{\sum^k}\label{CT3.4}
\end{align} 
Next, we want to construct a sequence of (larger) time points of the same order as $(a+b)\cdot E_\omega\widetilde{T}_n^{(n)}$ at which the distance to stationarity is strictly positive. We define the sequence $(a_m)_{m\in\N}$ by
$
	a_m:=\ \lfloor\nu_{j_m} + 2(\ln \nu_{j_m})^2\rfloor,
$
and using \eqref{CT3.0.1} we get for $\omega \in F(a_m)$
\begin{align*}
	\exp(H_{j_m-1})\ \ge \ \frac12m^2\left(E_\omega \widetilde{T}_{\nu_{j_m-1}}^{(j_m)}  + E_\omega^{\nu_{j_m}} \widetilde{T}_{a_m}\right)
\end{align*}
because after the ``large" increase in block $j_{m}-1$ there cannot be a second ``large" increase in the interval $[\nu_{j_m},a_m]$ for environments $\omega\in E\left(a_m, \frac23\right) \subset F(a_m)$.	
Therefore, we can conclude for $\omega \in F(a_m)$
\begin{align*}
		\frac{E_\omega\widetilde{T}_{\nu_{j_m-1}}^{(j_m)}+E_\omega^{\nu_{j_m}} \widetilde{T}_{a_m}}{E_\omega\widetilde{T}_{a_m}^{(j_m)}}\ \le\ \frac{E_\omega\widetilde{T}_{\nu_{j_m-1}}^{(j_m)}+ E_\omega^{\nu_{j_m}} \widetilde{T}_{a_m}}{\exp(H_{j_m-1})}\ \le\ \frac2{m^2}\ \stackrel{m\to\infty}{\longrightarrow}\ 0,
\end{align*}
which yields
\begin{align}
\frac{E_\omega^{\nu_{j_m-1}}\widetilde{T}_{\nu_{j_m}}^{(j_m)}}{E_\omega\widetilde{T}_{a_m}^{(j_m)}}\ \stackrel{m\to\infty}{\longrightarrow}\ 1.\label{CT3.4.1}
\end{align}
Let $\varepsilon>0$ be small enough such that $\left(\frac32 - \varepsilon \right)b + \frac12 a > b+a$. We get
\begin{align}
	& P_{\omega^n}\left(Y_{\left\lceil\left(\left(\frac32 - \varepsilon \right) b+\frac12 a\right)\cdot E_\omega\widetilde{T}_n^{(n)}\right\rceil}\ \ge\ n-2(\ln n)^2\right)\nonumber\\
	\le\ & \left(P_{\omega^n}\left(T_{\left\lfloor n - 2 (\ln n)^2\right\rfloor}\ <\  \left(1 - \varepsilon \right)b\cdot E_\omega\widetilde{T}_n^{(n)}\right) \vphantom{\sum_{i=1}^{\left\lceil\left(\left(\frac32 - \varepsilon \right) b+\frac12 a\right)\cdot E_\omega\widetilde{T}_n^{(n)}\right\rceil}} \right. \nonumber\\
	&\hspace{15pt} + \left. P_{\widetilde{\omega}}\left(\sum_{i=1}^{\left\lceil\left(\left(\frac32 - \varepsilon \right) b+\frac12 a\right)\cdot E_\omega\widetilde{T}_n^{(n)}\right\rceil}(1-Z_i) < \left(\frac12 b+ \frac12 a\right)\cdot E_\omega\widetilde{T}_n^{(n)}\right)\right)\label{CT3.4.1.0.1}
\end{align} 
Due to the choice of $\varepsilon$, Cramér's Theorem yields
\begin{align}
	P_{\widetilde{\omega}}\left(\sum_{i=1}^{\left\lceil\left(\left(\frac32 - \varepsilon \right) b+\frac12 a\right)\cdot E_\omega\widetilde{T}_n^{(n)}\right\rceil}Z_i < \left(\frac12 b+ \frac12 a\right)\cdot E_\omega\widetilde{T}_n^{(n)}\right)\ \stackrel{n\to\infty}{\longrightarrow}\ 0. \label{CT3.4.1.0.2}
\end{align}
Again due to Lemma \ref{PL5}, we have that $\omega \in F(n)$ for $\textbf{P}$-almost every environment $\omega$ and all $n$ large enough. Thus, we get for $\textbf{P}$-almost every environment $\omega$ using \eqref{CT3.4.1.0.1} and \eqref{CT3.4.1.0.2} (cf.~\eqref{A(n)} for the definition of the set $A(n)$)
\begin{align*}
	&\liminf_{n\to\infty}P_{\omega^n}\left(Y_{\left\lceil\left(\left(\frac32 - \varepsilon \right) b+\frac12 a\right)\cdot E_\omega\widetilde{T}_n^{(n)}\right\rceil}\ \ge\ n-2(\ln n)^2\right)\\
	\le\ & \liminf_{k\to\infty} \left(P_{\omega^n}\left(\widetilde{T}_{\nu_{j_{m_k}}}^{(j_{m_k})}\ <\ (1 - \varepsilon)b\cdot E_\omega\widetilde{T}_{a_{m_{k}}}^{(j_{m_k})}\right)+ P_{\widetilde{\omega}}\left(A\left(\nu_{j{_{m_{k}}}}\right)^{\textnormal{c}}\right)\right) \vphantom{\sum_i^k}.\nonumber
\end{align*}
Further, using that $\left(\widetilde{T}_{\nu_{j_{m_k}}}^{(j_{m_k})} -\widetilde{T}_{\nu_{j_{m_k}-1}}^{(j_{m_k})} \right)$ has the same distribution with respect to $P_\omega$ as with respect to $P_{\omega^n}$, we get together with \eqref{CT3.4.1} and Lemma \ref{PL1} 
\begin{align*}
&\liminf_{n\to\infty}P_{\omega^n}\left(Y_{\left\lceil\left(\left(\frac32 - \varepsilon \right) b+\frac12 a\right)\cdot E_\omega\widetilde{T}_n^{(n)}\right\rceil}\ \ge\ n-2(\ln n)^2\right)\\ \le\ & \liminf_{k\to\infty} \left(P_{\omega}\left(\left(\widetilde{T}_{\nu_{j_{m_k}}}^{(j_{m_k})} -\widetilde{T}_{\nu_{j_{m_k}-1}}^{(j_{m_k})} \right)\ <\ b\cdot E_\omega^{\nu_{j_{m_k}-1}}\widetilde{T}_{\nu_{j_{m_k}}}^{(j_{m_k})}\right)\right). \vphantom{\sum_i^k}
\end{align*}
Finally, this together with \eqref{CT3.3} and Lemma \ref{CL1} yields
\begin{align}
 &\limsup_{n\to\infty} d_n\Bigg(\left\lceil\left(\left(\frac32 - \varepsilon \right) b+\frac12 a\right)\cdot E_\omega\widetilde{T}_n^{(n)}\right\rceil\Bigg)\nonumber\vphantom{\sum^k}\\
 \ge\ & \limsup_{n\to\infty}\left(\pi_{\omega^n}\Big(\left[n-2(\ln n)^2,n\right]\Big)- P_{\omega^n}\left(Y_{\left\lceil\left(\left(\frac32 - \varepsilon \right) b+\frac12 a\right)\cdot E_\omega\widetilde{T}_n^{(n)}\right\rceil}\ \ge\ n-2(\ln n)^2\right)\right)\nonumber\vphantom{\sum^k}\displaybreak[0]\\
   \ge\ &1 - \lim_{k\to\infty} P_{\omega}\left(\left(\widetilde{T}_{\nu_{j_{m_k}}}^{(j_{m_k})} -\widetilde{T}_{\nu_{j_{m_k}-1}}^{(j_{m_k})} \right)\ <\ b\cdot E_\omega^{\nu_{j_{m_k}-1}}\widetilde{T}_{\nu_{j_{m_k}}}^{(j_{m_k})}\right) \vphantom{\sum_i^k}\ 
  >\ 0 \vphantom{\sum^k}.\label{CT3.5}
\end{align}
Since for $a<b$ we have $a+b < \left(\frac32-\varepsilon\right) b + \frac 12 a,$ \eqref{CT3.4} and \eqref{CT3.5} show that the order of the window size has to be bigger or equal to the order of $E_\omega\widetilde{T}_n^{(n)}$. But due to Theorem \ref{mixing time}, this is the order of the mixing time. Consequently, the sequence of lazy RWRE $(Y_n)_{n\in\N_0}$ cannot exhibit a cutoff defined in Definition \ref{CD2}.
\end{proof}
\begin{proof}[Proof of Theorem \ref{mixing time}] 
At first, let us assume $\kappa\le 1$. For large $n$, we have due to Lemma \ref{CL2} and using Markov inequality
\begin{align*} 
	d_n\left(\lceil 12E_{\omega^n} (T_n)\rceil\right)\ \le\ &P_{\omega^n}\left(T_n^{Y}  > \lceil 12E_{\omega^n} (T_n)\rceil\right) \\
	 \le\ &P_{\omega^n}\left(T_n > 5E_{\omega^n} (T_n) \right) + P_{\omega^n}\left(\sum_{i=1}^{\lceil 12E_{\omega^n} (T_n)\rceil}Z_i > 7E_{\omega^n} (T_n)\right)\	 \le\ \frac14
\end{align*}
and therefore $t_{\textnormal{mix}}^\omega(n)\ \le\ \lceil 12E_{\omega^n} (T_n)\rceil\ \le\ \lceil 12E_{\omega} T_n\rceil. $ Due to Theorem \ref{BT1} (\textit{a}), this yields for $\kappa \le 1$
\begin{align}
			\limsup_{n\to\infty} \frac{\ln(t_{\textnormal{mix}}^\omega(n))}{\ln n}\ \le\ \frac1\kappa. \label{CT3.0.0.0.1}
\end{align}
Further, for the lower bound of the mixing time, we get for any constant $c>0$
\begin{align}
&	d_n\left(\left\lfloor c n^{\frac1\kappa}(\ln n)^{-\frac4\kappa} \right\rfloor -2\right) \nonumber\\
\ge\ & \pi_{\omega^n}\left([n- 2(\ln n)^2,n]\right)	- P_{\omega^n}\left(Y_{\left\lfloor c n^{\frac1\kappa}(\ln n)^{-\frac4\kappa} \right\rfloor -2} \ge n- 2(\ln n)^2\right) \nonumber\\
\ge\ & \pi_{\omega^n}\left([n- 2(\ln n)^2,n]\right)	- P_{\omega^n}\left(T^Y_{\lfloor n- 2(\ln n)^2\rfloor} \le \left\lfloor c n^{\frac1\kappa}(\ln n)^{-\frac4\kappa} \right\rfloor -2\right) \nonumber\\
 \ge\ & \pi_{\omega^n}\left([n- 2(\ln n)^2,n]\right)	- P_{\omega^n}\left(T_{\lfloor n- 2(\ln n)^2\rfloor} < \left\lfloor c n^{\frac1\kappa}(\ln n)^{-\frac4\kappa} \right\rfloor -1\right). \label{CT3.0.0.1}
\end{align}
To reach position $\lfloor n- 2(\ln n)^2\rfloor$ the lazy RWRE has first of all to cross the highest block on the interval $[0,\lfloor n- 2(\ln n)^2\rfloor]$. For environments $\omega\in B_4(\lfloor n- 2(\ln n)^2\rfloor)$, we have at least one block with a height of more than $\frac1\kappa\left(\ln \left(\frac{n}2\right)- 4 \ln\ln n\right)$. Now, using Proposition 4.2 in \cite{FGP}, we get for \eqref{CT3.0.0.1} for environments $\omega\in B_4(\lfloor n- 2(\ln n)^2\rfloor)$ and $c:= \frac{1}{\gamma\cdot 2^{1+\frac1\kappa}}$,
\begin{align*}
		d_n\left(\left\lfloor c n^{\frac1\kappa}(\ln n)^{-\frac4\kappa} \right\rfloor -2\right)
	\ge\ & \pi_{\omega^n}\left([n- 2(\ln n)^2,n]\right)	- \gamma \cdot c\cdot  2^{\frac1\kappa} \
	=\  \pi_{\omega^n}\left([n- 2(\ln n)^2,n]\right) - \frac12.
\end{align*}
Finally, using Lemma \ref{PL5} and Lemma \ref{CL1} yield for $\textbf{P}$-almost every environment $\omega$ and large $n$
\begin{align*}
	t_{\text{mix}}^{\omega} (n)\ \ge\ \left\lfloor \frac1{\gamma \cdot 2^{1+\frac1\kappa}} n^{\frac1\kappa}(\ln n)^{-\frac4\kappa} \right\rfloor -2
\end{align*}
and therefore 
\begin{align*}
	\liminf_{n\to\infty} \frac{\ln t_{\text{mix}}^{\omega} (n)}{ \ln n} \ \ge\ \frac1\kappa,
\end{align*}
which together with \eqref{CT3.0.0.0.1} shows that for $\kappa \le 1$ and $\textbf{P}$-almost every environment $\omega$ we have 
\begin{align*}
	\lim_{n\to\infty} \frac{\ln t_{\text{mix}}^{\omega} (n)}{ \ln n} \ =\ \frac1\kappa.
\end{align*}
In Theorem \ref{CT2} we show that a sequence of lazy RWRE exhibits a cutoff for $\kappa >1$. Therefore, the mixing time has the same order as the cutoff times, and due to equations  \eqref{Ergodensatz}, \eqref{PL0.1.0.01} and Lemma \ref{PL0.1} we have for $\textbf{P}$-almost every environment $\omega$ 
\begin{align*}
	\lim_{n\to\infty} \frac1n \cdot  t_{\text{mix}}^{\omega} (n)\ =\ 2 \mathbb{E} T_1.
\end{align*}  

\end{proof} 
\noindent
{\bf Acknowledgements:} We are very much indepted to Jonathon Peterson for helpful comments on 
the thesis of the second author. We also thank
the referee for his careful lecture, resulting in many improvements.

\bibliographystyle{alpha}

\begin{thebibliography}{[PE08]}
\bibitem[DZ98]{DZ} Dembo, A. and Zeitouni, O. (1998)
		    \textit{Large Deviations Techniques and Applications.}
			2nd edn. Applications of Mathematics (New York) \textbf{38}, Springer, New York.
\bibitem[DLP10]{DLP} Ding, J., Lubetzky, E.,  and Peres, Y. (2010)
			\textit{Total variation cutoff in birth-and-death chains.}
			Probab. Theory Relat. Fields \textbf{146}, 61-85.
\bibitem[ESZ09]{ESZ09} Enriquez, N., Sabot, C. and Zindy, O. (2009)
              \textit{Limit laws for transient random walks in random environment on $\Z$.}
              Ann. Inst. Fourier (Grenoble) 59 \textbf{6}, 2469–2508.
\bibitem[FGP10]{FGP} Fribergh, A., Gantert, N. and Popov, S. (2010)
			\textit{On Slowdown and Speedup of Transient Random Walks in Random Environment.}
			Probab. Theory Relat. Fields \textbf{118}, 65-114.
\bibitem[GS02]{GS} Gantert, N. and Shi, Z. (2002)
			\textit{Many Visits to a Single Site by a Transient Random Walk in Random Enviroment.}
			Stochastic Process. Appl. \textbf{99}, no. 2, 159-176.
\bibitem[G07]{G} Goldsheid, I. (2007) 
			\textit{Simple Transient Random Walks in One-dimen\-sional Random Environment: the Central Limit Theorem.}
			Probab. Theory Relat. Fields \textbf{139}, no. 1-2, 41-64. 
\bibitem[I72]{I} Iglehart, D.L. (1972) 	
			\textit{Extreme values in the GI/G/1 queue.}
			Ann. Math. Stat. \textbf{43}, 627-635.										
\bibitem[KKS75]{K}	Kesten, H., Kozlov, M. V. and Spitzer, F. (1975)
			\textit{A Limit Law for Random Walk in Random Enviroment.}
			Compositio Math. \textbf{30}, 145-168.
\bibitem[K12]{Ko} Kochler, T. (2012)  \textit{Cutoff and cookies -- interacting walks in random environment.} Dissertation at Technische Universit\"at München.
\bibitem[LPW09]{LPW} Levin D.A., Peres, Y. and Wilmer E.L. (2009)									
\textit{Markov Chains and Mixing Times.}
		American Mathematical Society, Providence, RI 2009, xviii+371 pp. ISBN 
		978-0-8218-4739-8.						
\bibitem[PZ09]{PZ} Peterson, J. and Zeitouni, O. (2009)
\textit{Quenched limits for transient, zero speed one-dimensional random walk in random 	
										environment.}
										Ann. Probab. \textbf{37}, no. 1, 143-188.
\bibitem[Si82]{S}	Sinai, Y.G. (1982)  \textit{Limit behaviour of one-dimensional random walks in random environments.}
										Theory Probab. Appl. vol. \textbf{27}, 256-268.
\bibitem[S76]{Sp}	Spitzer, F. (1976)	\textit{Principles of Random Walk.}
										2nd edn. Springer, New York.										
\bibitem[So75]{So}	Solomon, F. (1975) \textit{Random walks in random environments.}	
										Ann. Probab. \textbf{3}, 1-31. 		
\bibitem[Zei04]{Z}  Zeitouni, O. (2004) \textit{Random walks in random environment.}, Lecture Lectures on probability theory and
		 	statistics, Lecture Notes in Math. \textbf{1837}, Springer, Berlin, 189-312.	
\end{thebibliography}

\medskip

{\footnotesize

Nina Gantert: Technische Universit\"at M\"unchen,
Fakult\"at f\"ur Mathematik,
Boltzmannstra\ss e~3, 
85748~Garching bei M\"unchen.
Germany.\\
gantert@ma.tum.de\\
http://www-m14.ma.tum.de/en/people/gantert/   \\

Thomas Kochler: Technische Universit\"at M\"unchen,
Fakult\"at f\"ur Mathematik,
Boltzmannstra\ss e~3, 
85748~Garching bei M\"unchen.
Germany.\\  }

\end{document}